\documentclass[10pt,letterpaper,oneside]{amsart}

  \usepackage{amsmath,amssymb,amsthm}
  \usepackage{amscd}
  \usepackage[margin=1.3in,footskip=1.5in]{geometry}
  \usepackage{graphicx,epstopdf}
  \usepackage{color}
  \usepackage{enumerate}
  \usepackage{xspace}
  \usepackage{tipa}
\usepackage{latexsym}
  \usepackage{epsfig}
  \usepackage{pinlabel}
  \usepackage[all]{xy}

\usepackage{xcolor}
\definecolor{blue}{rgb}{0,0.5,0.5}
\definecolor{ocean}{rgb}{0.00,0.26,0.50}
\usepackage[colorlinks = true, linkcolor=blue, citecolor=ocean]{hyperref}
\usepackage{enumitem}
 \usepackage{mathabx}
\newcommand{\showcommentsbox}{yes}

\newsavebox{\commentbox}
{\ifthenelse{\equal{\showcommentsbox}{yes}}%
{\footnotemark
        \begin{lrbox}{\commentbox}
        \begin{minipage}[t]{1.25in}\raggedright\sffamily\tiny
        \footnotemark[\arabic{footnote}]}
{\begin{lrbox}{\commentbox}}}%
{\ifthenelse{\equal{\showcommentsbox}{yes}}%
{\end{minipage}\end{lrbox}\marginpar{\usebox{\commentbox}}}
{\end{lrbox}}}

  \newcommand{\calC}{\mathcal{C}}

  \newcommand{\calF}{\mathcal{F}}
  
  \newcommand{\calH}{\mathcal{H}}

  \newcommand{\calX}{\mathcal{X}}
  \newcommand{\calY}{\mathcal{Y}}

  \newcommand{\EE}{\mathbb{E}}

  \newcommand{\ZZ}{\mathbb{Z}}

  \newcommand{\gothic}{\mathfrak}

  \newcommand{\gS}{{\gothic S}}

  \newtheorem{theorem}{Theorem}[section]
  \newtheorem{proof of the main theorem}[theorem]{Proof of the Main Theorem}
  \newtheorem{proposition}[theorem]{Proposition}
   \newtheorem*{cormcg}{Corollary~\ref{cor: mcg}}
  \newtheorem*{corfbc}{Corollary~\ref{cor:non-existence-ct-free-by-cyclic}}
  \newtheorem*{mainthm1}{\textbf{Theorem}~\ref{main-thm-1}}
  \newtheorem*{mainthm2}{\textbf{Theorem}~\ref{main-thm-2}}
  \newtheorem{sketch of proof}[theorem]{Sketch of Proof}  
  \newtheorem{corollary}[theorem]{Corollary}
  \newtheorem{lemma}[theorem]{Lemma}
  \newtheorem{conjecture}[theorem]{Conjecture}
  \newtheorem*{conjecture*}{Conjecture}

  \newtheorem{question}[theorem]{Question}

  \theoremstyle{definition}
  \newtheorem{definition}[theorem]{Definition}

  \newtheorem*{claim*}{Claim}

  \newtheorem*{question*}{Question}
  \newtheorem*{answer*}{Answer}
  \newtheorem*{application*}{Application}
  \newtheorem*{ideas*}{ideas}
  \theoremstyle{remark}
  \newtheorem{remark}[theorem]{Remark}
  \newtheorem*{remark*}{Remark}

  \DeclareMathOperator{\Mod}{Mod}

  \DeclareMathOperator{\diam}{diam}

  \newcommand{\la}{\langle} 
  \newcommand{\ra}{\rangle}

  \newcommand{\param}{{\mathchoice{\mkern1mu\mbox{\raise2.2pt\hbox{$
  \centerdot$}}
  \mkern1mu}{\mkern1mu\mbox{\raise2.2pt\hbox{$\centerdot$}}\mkern1mu}{
  \mkern1.5mu\centerdot\mkern1.5mu}{\mkern1.5mu\centerdot\mkern1.5mu}}}
\renewcommand{\setminus}{{\smallsetminus}}

  \newcommand{\co}{\colon\thinspace}

   \newcommand{\F}{{\mathbb{F}}}

\renewcommand{\F}{\ensuremath{\mathbb{F} } }

\begin{document}


\title[Non-existence of Cannon--Thurston maps for HHGs]{non-existence  of Cannon--Thurston maps for hierarchically hyperbolic groups}

\author {Funda G\"ultepe}\thanks{First author was partially
supported by NSF grant DMS-2137611.}
\address{Department of Mathematics and Statistics\\
 University of Toledo\\
 Toledo, OHIO}
\email{\url{funda.gultepe@utoledo.edu}}
\urladdr{https://sites.google.com/view/fundagultepe}

\author{Ravi Tomar}
\address{Beijing International Center for Mathematical Research, Peking University, No. 5 Yiheyuan Road Haidian District, Beijing, P.R.China 100871}
\email{\url{ravitomar547@gmail.com}}

\begin{abstract}
We prove that Cannon--Thurston maps do not exist for hyperbolic normal subgroups of hierarchically hyperbolic groups, both for Morse and hierarchically hyperbolic boundaries. This result includes a large class of normal subgroups of the mapping class group of a surface of genus $g\geq 2$. As a corollary, we also prove a non-existence result for subgroups of most free by cyclic groups.  Moreover, we prove that the visual boundary of a CAT(0) group with isolated flats is homeomorphic to the relatively hierarchically hyperbolic boundary of the space it acts on, thus recovering a main result of \cite{BGGGS}.

\vspace{0.5cm}

\end{abstract}

\maketitle

\setcounter{tocdepth}{2}
\tableofcontents
\section{Introduction}

For hyperbolic subgroups $H$  of a hyperbolic group $G $,  let $\mathcal C (H)$ and $\mathcal C(G)$ be Cayley graphs for $H$ and $G$, respectively, given by choosing a finite generating set for $H$ and extending it to a finite generating set for $G$.  The \emph{Cannon–Thurston map} for the pair $(H,G)$ \emph{exists} if there is a continuous extension 
$\widehat \iota: \hat \calC(H)\rightarrow \hat \calC(G)$ between compactifications that extends the inclusion $\iota: \mathcal C(H) \hookrightarrow \mathcal C(G)$. In this setting, the restriction $\partial \iota\co \partial H\rightarrow \partial G$ of $\hat \iota$ between Gromov boundaries is called the \emph{Cannon--Thurston map}.

Existence of the Cannon--Thurston map was proved in various situations (see \cite{Minsky-rigidity,ADP-CT,McMullen,BowCT1, MitraCT, MitraTrees,Mj1}), with Mj (previously Mitra) \cite{Mj1} proving the existence in general.  Baker and Riley (\cite{BakerRiley1}) gave the first example of a hyperbolic subgroup of a hyperbolic group with no continuous Cannon--Thurston map (see also \cite{HMS-ct-maps}). As a contrast, Cannon--Thurston maps might exist even under \emph{arbitrarily heavy distortion} of a free subgroup of a hyperbolic group (\cite{BakerRiley2}).

In this paper, we will use the term \emph{Cannon--Thurston map} in the extended sense; in our work $G$ is not necessarily hyperbolic. In this extended setting, the existence of continuous maps between boundaries can still be useful to understand hyperbolic directions in a non-hyperbolic group. Results vary in the extended setting as well: for example, by the work of Bowditch (\cite{Bow-97}), a Cannon--Thurston map for suitable free subgroups of a \emph{relatively} hyperbolic group exists (see also \cite{pal-rel-hyp-ct-extension}) while there are examples of non-existence in other settings (for example \cite{BGGGS, Mousley, charney-sisto-ct-map-morse}). In this paper we will focus on investigating (non) existence for non-hyperbolic  free by cyclic and hierarchically hyperbolic groups.

Inspired by special cube complexes and mapping class groups,  \emph{hierarchical hyperbolicity} is a notion of \emph{coarse non-positive curvature} developed by Behrstock, Hagen and Sisto (\cite{BHS1,BHS2}) by axiomatizing some of their properties in a unified way. The spaces that satisfy the axioms are called  \emph{hierarchically hyperbolic spaces (HHS)} and a  \emph{hierarchically hyperbolic group (HHG)} is a finitely generated group whose Cayley graph admits an equivariant (with respect to group multiplication) HHS structure.  The class of HHS/Gs includes 
mapping class groups and many CAT(0) cube complexes (including all universal covers of compact special cube complexes), along with Gromov hyperbolic spaces,  Teichm\"{u}ller space with any of the usual metrics, many Artin groups \cite{BHS1} and more. 

A \emph{free by cyclic group} is a group $\Gamma$ given by
\[\Gamma =\F\rtimes_{\phi}\mathbb Z= \big{\la} \F, t\co tft^{-1}=\phi(f), \,\forall f\in \F  \big{\ra}    \]
where $\phi$ is an outer automorphism of the non-abelian free group $\F$. Free by cyclic groups have many properties analogous to those of (virtually) fibered $3$-- manifolds (\cite{Kielak1, DowKapLeinfbc1, DowKapLeinfbc2}) and appear in many areas of mathematics. The geometry of the free by cyclic groups is determined, to a large extent, by the dynamical properties of $\phi$ (\cite{Br-00, Gh-23, BGGH}). 
In \cite{BGGH}, the authors characterize the \emph{hierarchical hyperbolicity} of polynomially growing (hence all) free by cyclic groups by an algebraic condition on intersections of maximal virtually $\F \times \mathbb Z$ subgroups, called \emph{unbranched blocks}.

For hyperbolic free by cyclic groups, Cannon--Thurston maps exist by the work of Mitra (\cite{MitraCT}). However, there are examples of subgroups of relatively hyperbolic free by cyclic groups for which the Cannon--Thurston map, in the general sense, does not exist (\cite{BGGGS}.)  In \cite{BGGH}, the authors  asked:%

\begin{question}\label{ques:ct-map-free-by-cyclic} Let $\Gamma$ be an unbranched mapping torus (free by cyclic group) and let $H<\Gamma$ be a HHG subgroup. When does the inclusion map $H \hookrightarrow  \Gamma=\F\rtimes_\phi\mathbb Z$  extend to a continuous map $ \partial_{HH}H \rightarrow \partial_{HH}\Gamma$ between HHG boundaries? 
\end{question}
Hierarchically hyperbolic groups are compactified using the hierarchical boundary (HHS boundary) from \cite{DHSbound}. (For the details, see Section \ref{sec:bdryHHS}). By the same work, the answer to the question is positive when $H$ is \emph{hierarchically quasiconvex} ([Definition 5.1 \cite{BHS2}]), thus prompting the following conjecture made in \cite{BGGH}: 
\begin{conjecture}\label{conj:no-cannon-thurston}    
 Let $G$ be an HHG and $H <G$ an infinite hyperbolic
normal subgroup of infinite index. Then the Cannon--Thurston map does not exist for $(H, G)$ unless either $G$ is hyperbolic or $H$ is hierarchically quasiconvex in some HHG structure on $G$.
\end{conjecture}
In this paper, we prove a non-existence criterion (Lemma \ref{lemma-nonexistence-ct}) for Cannon--Thurston maps (in general sense) inspired by, and similar to that of \cite{BGGGS} and prove the following theorem, which confirms the Conjecture \ref{conj:no-cannon-thurston} under extra hypotheses that $G$ is not quasiisometric to a product of unbounded spaces and $G$ contains a subgroup isomorphic to $\mathbb Z^2$.
\begin{theorem}\label{main-thm-1}
	Let $G$ be a non-hyperbolic HHG that is not quasiisometric to a product of unbounded spaces.
	If $G$ contains a free abelian subgroup of rank at least $2$ then, for an infinite hyperbolic normal subgroup $N$ of $G$, we have the following:
	\begin{enumerate}
	\item There does not exist a Cannon--Thurston map from the Gromov boundary of $N$ to the HHG boundary of $G$.
	\item There does not exist a Cannon--Thurston map from the Gromov boundary of $N$ to the Morse boundary of $G$.
	\end{enumerate}
\end{theorem}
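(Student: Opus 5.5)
Our plan is to apply the non-existence criterion of Lemma~\ref{lemma-nonexistence-ct}, modeled on \cite{BGGGS}. We expect it to say roughly the following. Suppose there is a point $p\in\partial N$ and sequences $x_n,y_n\in N$ converging in $\hat\calC(N)$ to the same point $p$. Suppose also that their images converge in the compactification of $G$ to two \emph{distinct} limits, or that the geodesics $[x_n,y_n]$ in $G$ stay within bounded distance of the identity. Then no continuous extension exists. So the whole task is to build such sequences from the hypotheses.

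First, we use the $\mathbb Z^2$ subgroup $A=\langle a,b\rangle<G$ together with normality of $N$. Since $N$ is infinite and hyperbolic, $N\cap A$ is cyclic or trivial. We conjugate elements of $N$ by powers of $a$ and $b$. Take an infinite-order $h\in N$ (one exists since $N$ is infinite hyperbolic). Then the elements $a^{n}ha^{-n}$ and $b^{n}hb^{-n}$ lie in $N$. In $G$ they are represented by paths of length $\le 2n|a|+|h|$ that run along the flat spanned by $A$.

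Second, we split into two cases.

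\emph{Case 1: $N\cap A$ is infinite.} Say $a^k\in N$. Then $b a^k b^{-1}=a^k$, up to finite index considerations, so $b$ commensurates $\langle a^k\rangle$ inside $N$. Consider the sequences $a^{kn}$ and $b^{m}a^{kn}b^{-m}$ with $m=m(n)\to\infty$. These are equal in $N$, so the relevant argument instead compares the rays $a^{kn}$ and $b^{n}a^{kn}$ projected to the flat. We expect this case to conflict with $N$ being normal, infinite index and hyperbolic, using that the centralizer of $a^k$ then contains $\mathbb Z^2$. We would handle it separately, perhaps by replacing $A$ with a different $\mathbb Z^2$.

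\emph{Case 2: $N\cap A$ is trivial.} Then the conjugates $g_n=a^nha^{-n}$ form an infinite subset of $N$, and we may pass to a subsequence converging to some $p\in\partial N$. In $G$, $g_n$ is at distance $\le|h|$ from the path $a^n\to a^nh$. We then compare with $g_n'=(ab)^n h (ab)^{-n}$ or $b^nhb^{-n}$. Using the HHS structure, the domains in which $A$ has unbounded projection are pairwise orthogonal, and the flat $A$ lies in a product region $P_U$. The HHS boundary point determined by the direction $a^\infty$ differs from that of $b^\infty$, namely the boundary points of the two orthogonal domain factors. Thus $g_n\to\xi_a$ and $g_n'\to\xi_b$ in $\partial_{HH}G$ with $\xi_a\ne\xi_b$. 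Meanwhile, in $N$ we want both sequences to converge to the same $p$. This is the step we expect to be the main obstacle. To get it, we would instead choose the sequence interleaving: $z_n=a^n h a^{-n}$ and $w_n=a^n b^{m} h b^{-m} a^{-n}$ for fixed large $m$. Both equal $a^n(\cdot)a^{-n}$ applied to elements $h$ and $h'=b^mhb^{-m}$ of $N$. The $N$-distance between $z_n$ and $w_n$ need not be bounded, though, so we would use that conjugation by $a$ acts on $N$ by an automorphism $\phi$. Then $z_n=\phi^n(h)$ and $w_n=\phi^n(h')$. These converge in $\partial N$ to the same attracting point when $\phi$ has north--south-like dynamics on suitable elements, or we pass to subsequences and choose $h'$ accordingly.

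Third, for the Morse boundary version (2), note that if a Cannon--Thurston map to $\partial_*G$ exists, then the images of such sequences must converge to Morse points. But the sequences above fellow-travel the flat $A$, whose directions are not Morse. The non-hyperbolicity of $G$ and the absence of a product quasiisometry type ensure that $\partial_*G$ is nonempty and that the HHS and Morse boundaries interact as in \cite{DHSbound}: the Morse boundary embeds in $\partial_{HH}G$. Hence a Morse Cannon--Thurston map would compose to an HHS one, contradicting (1). Alternatively, we would apply Lemma~\ref{lemma-nonexistence-ct} directly with the Morse topology.
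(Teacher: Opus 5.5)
There is a genuine gap in (1), and it starts with your case split.

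\textbf{Case~1 is not contradictory.} In Case~1 ($N\cap A$ infinite) nothing conflicts with $N$ being normal, of infinite index, and hyperbolic. This is precisely the situation the theorem exploits. For example, take $\Gamma=\mathbb{F}\rtimes_\phi\mathbb{Z}$ with $\phi$ not atoroidal and $N=\mathbb{F}$. A relation $t^kwt^{-k}=uwu^{-1}$ with $1\neq w\in\mathbb{F}$ gives $\langle w,u^{-1}t^k\rangle\cong\mathbb{Z}^2$, which meets $N$ in $\langle w\rangle$. That the centralizer of $w$ in $G$ contains $\mathbb{Z}^2$ says nothing about $N$. The paper goes the opposite way from you: by Lemma~\ref{lemma-normal-intersect-freeabelian} one can always choose $K\cong\mathbb{Z}^2$ with $K\cap N\neq\{1\}$, so your Case~2 never needs to be treated.

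\textbf{Case~2 does not work as written.} You claim $g_n=a^nha^{-n}$ stays within $|h|$ of the path from $a^n$ to $a^nh$. In fact $d(a^nha^{-n},a^nh)=|a^n|$, so these conjugates do not track the flat of $A$; they end on the coset $a^nhA$. Consequently the claimed limits $\xi_a\neq\xi_b$ in $\partial_{HH}G$ are unjustified. The common limit in $\partial N$, which you yourself flag as the main obstacle, is never produced.

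\textbf{The missing idea.} What is needed is a fixed-point argument, as in Lemma~\ref{lemma-nonexistence-ct}, rather than a two-sequence one.
\begin{enumerate}
\item Take $h\in N\cap K$ of infinite order. It has north--south dynamics on $\partial N$. If a CT map $f$ existed, then for all $y\neq h^-$ we would have $h^nf(y)=f(h^ny)\to f(h^+)$. Hence $h$ fixes at most two points of $f(\partial N)=\Lambda(N)$, by Lemma~\ref{lemma:image of CT is the limit set}.
\item On the other hand, a power of $h$ fixes pointwise the limit set of the hierarchically quasiconvex hull of $K$ \cite[Proposition 2.17]{HHSequi}, and this set has more than two points.
\item Finally, $\Lambda(N)=\partial_{HH}G$ by Lemma~\ref{lemma:limit-set-of-normal-subgroup}. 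This is where ``not quasiisometric to a product'' is used: it makes the top-level space $\calC S$ unbounded. Then the $G$-invariant set $\Lambda_{\calC S}(N)$ is all of $\partial\calC S$, which is dense in $\partial_{HH}G$.
\end{enumerate}
Your argument for (1) never uses that hypothesis.

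Your reduction of (2) to (1), via the Morse boundary sitting inside $\partial_{HH}G$, is essentially the paper's argument; the paper cites density of the Morse boundary in $\partial_{HH}G$. However, it inherits the gap in (1).
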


We note that by a recent theorem of Martelli (\cite{MartelliFlatClosing}), there are non-hyperbolic groups which do not contain $\mathbb Z^2$. As a result, we still do not know whether or not we can drop the hypothesis, as we do not know whether non-hyperbolic HHGs contain  $\mathbb Z^2$. However, for \emph{free by cyclic} HHGs, we only need the hypothesis that they are not quasiisometric to direct products. We thus answer Question \ref{ques:ct-map-free-by-cyclic} negatively for hyperbolic normal subgroups of free by cyclic groups: 

\begin{corollary}\label{cor:non-existence-ct-free-by-cyclic}
	Let $\Gamma=\F\rtimes_{\phi}\ZZ$ be an unbranched free by cyclic group such that $\Gamma$ is not quasiisometric to a product of unbounded spaces. Let $N$ be a hyperbolic normal subgroup of $\Gamma$. If $\phi$ is not atoroidal (equivalently, when $\Gamma$ is not hyperbolic), the inclusion map $N\hookrightarrow  \Gamma=\F\rtimes_\phi\mathbb Z$  does not extend to a continuous map from the Gromov boundary of $N$ to the HHG boundary of $\Gamma$. 
\end{corollary}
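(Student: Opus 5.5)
The corollary will follow from Theorem~\ref{main-thm-1}(1) once we check that its hypotheses hold for $\Gamma$ and $N$. Concretely, we need four facts: $\Gamma$ is an HHG; $\Gamma$ is non-hyperbolic; $\Gamma$ is not quasiisometric to a product of unbounded spaces (this is assumed); and $\Gamma$ contains a subgroup isomorphic to $\mathbb Z^2$. We also need $N$ to be infinite, so we treat the case of finite $N$ separately.

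\textbf{Hierarchical hyperbolicity.} By the characterization in \cite{BGGH}, an unbranched mapping torus $\Gamma=\F\rtimes_\phi\ZZ$ is a hierarchically hyperbolic group. Its HHG boundary $\partial_{HH}\Gamma$ is therefore defined as in Section~\ref{sec:bdryHHS}, and this is the boundary the corollary refers to.

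\textbf{Non-hyperbolicity and a $\mathbb Z^2$ subgroup.} Suppose $\phi$ is not atoroidal. Then there is a nontrivial $f\in\F$, an integer $k\ge 1$ and some $g\in\F$ with $\phi^k(f)=gfg^{-1}$. Set $s=g^{-1}t^k\in\Gamma$. Using $tft^{-1}=\phi(f)$ we get
\[ s f s^{-1} = g^{-1}\phi^k(f) g = f, \]
so $s$ commutes with $f$. The subgroup $\langle f,s\rangle$ is abelian. It is free abelian of rank $2$: under the quotient $\Gamma\to\ZZ$, the element $f$ maps to $0$ and $s$ maps to $k\neq 0$, while $f$ has infinite order because $\F$ is torsion-free. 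Hence $\langle f,s\rangle\cong\ZZ^2$. A hyperbolic group contains no $\ZZ^2$, so this also shows that $\Gamma$ is non-hyperbolic. This is the "equivalently" in the statement; the converse direction, that atoroidal $\phi$ gives hyperbolic $\Gamma$, is Brinkmann's theorem \cite{Br-00} and is not needed here.

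\textbf{Finite $N$.} If $N$ is finite, its Gromov boundary is empty. The statement is then vacuous, or excluded by convention, since Cannon--Thurston maps are only considered for infinite hyperbolic subgroups. We may therefore assume $N$ is infinite.

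With these checks in place, Theorem~\ref{main-thm-1}(1) applies to the pair $(N,\Gamma)$, and the Cannon--Thurston map $\partial N\to\partial_{HH}\Gamma$ does not exist.

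The only step requiring care is matching hypotheses: we must confirm that "unbranched" in \cite{BGGH} gives precisely an HHG structure, not merely an HHS structure on $\Gamma$ with a different boundary. This means checking that the boundary used in Question~\ref{ques:ct-map-free-by-cyclic} is the one coming from the equivariant HHG structure. Since HHS boundaries of the same HHG structure are what Theorem~\ref{main-thm-1} uses, this identification should be immediate.
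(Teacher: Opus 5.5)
Your proposal is correct and follows the paper's proof: the paper likewise observes that a non-atoroidal $\phi$ gives a $\ZZ^2$ subgroup of $\Gamma$ and then applies Theorem~\ref{main-thm-1}(1). You also fill in details the paper leaves implicit, namely the explicit subgroup $\langle f, g^{-1}t^k\rangle\cong\ZZ^2$, the HHG structure that comes from being unbranched, and the requirement that $N$ be infinite; since $\Gamma$ is torsion-free, that requirement only excludes the degenerate case $N=\{1\}$, which Theorem~\ref{main-thm-1} also omits.
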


Another direct corollary of our Theorem \ref{main-thm-1} is for the mapping class groups, which is captured in the following corollary, which says that there is no Cannon--Thurston map for hyperbolic normal subgroups of the mapping class groups.
\begin{corollary}\label{cor: mcg}
	Let $S_{g,n}$ denote the connected, orientable surface of genus $g$ and $n\geq 0$ punctures such that $3g-3+n\geq 2$. For any hyperbolic normal subgroup $N$ of Mod$(S_g^n)$, the inclusion $\iota\co N \hookrightarrow \Mod(S_g^n) $ does not extend to a continuous map from the Gromov boundary of $N$ to the HHG boundary of  $\Mod(S_g^n) $.
\end{corollary}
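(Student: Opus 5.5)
The corollary should follow by checking the hypotheses of Theorem~\ref{main-thm-1} for $G=\Mod(S_{g,n})$ with $3g-3+n\geq 2$.

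First, $\Mod(S_{g,n})$ is a hierarchically hyperbolic group by \cite{BHS1,BHS2}. The index set is the collection of (isotopy classes of) essential subsurfaces, and the associated hyperbolic spaces are the curve graphs.

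Second, the condition $3g-3+n\geq 2$ says that $S_{g,n}$ contains two disjoint, non-isotopic essential simple closed curves $\alpha,\beta$. The Dehn twists $T_\alpha,T_\beta$ commute and generate a subgroup isomorphic to $\ZZ^2$. This subgroup is quasi-isometrically embedded, which can be seen from the subsurface projections to the annuli around $\alpha$ and $\beta$. Consequently $\Mod(S_{g,n})$ contains a rank-$2$ free abelian subgroup, and it is not hyperbolic, since a hyperbolic group contains no $\ZZ^2$.

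Third, we must rule out that $\Mod(S_{g,n})$ is quasi-isometric to a product of unbounded spaces. This is the step needing the most care. I would argue in one of two ways.
\begin{enumerate}
\item Use pseudo-Anosov elements. They act loxodromically on the curve graph, which is the top-level hyperbolic space of the HHG structure and is unbounded. Hence they are Morse elements, so the Morse boundary of $\Mod(S_{g,n})$ is nonempty. A product of two unbounded geodesic spaces has no Morse geodesics, and having a Morse geodesic is preserved under quasi-isometry (via the Morse lemma for quasi-geodesics).
\item Invoke the known quasi-isometric rigidity of mapping class groups (Behrstock--Kleiner--Minsky--Mosher, Hamenstädt). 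This gives that every self quasi-isometry is close to left multiplication, which excludes a product splitting.
\end{enumerate}
The first route is self-contained, and I would use it. A little care is needed for the small sporadic surfaces with $3g-3+n=2$, such as $S_{0,5}$ and $S_{1,2}$. For these the curve graph is still infinite-diameter hyperbolic and pseudo-Anosovs still exist, so the argument goes through.

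Fourth, the statement quantifies over hyperbolic normal subgroups $N$, while Theorem~\ref{main-thm-1} requires $N$ to be infinite. If $N$ is finite, then $N$ is trivial for most surfaces, because $\Mod(S_{g,n})$ has no nontrivial finite normal subgroups apart from the hyperelliptic center in a few low-complexity cases. The Gromov boundary of a finite $N$ is empty, so the statement is vacuous there. Thus we may assume $N$ is infinite.

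With all hypotheses verified, part~(1) of Theorem~\ref{main-thm-1} applies and gives that no continuous extension $\partial N\to\partial_{HH}\Mod(S_{g,n})$ of the inclusion exists.
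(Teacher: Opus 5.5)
Your proposal is correct and takes the same approach as the paper. The paper's proof is a single line: Dehn twists about disjoint curves generate a $\ZZ^2$ in $\Mod(S_{g,n})$, so Theorem~\ref{main-thm-1} applies. You additionally check the hypotheses the paper leaves implicit (non-hyperbolicity, and unboundedness of the curve graph together with Morse pseudo-Anosovs to rule out a product splitting).

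One small correction: for finite $N$ the conclusion is not vacuous. It actually fails, because $\partial N=\emptyset$ and the empty map is trivially a continuous extension. So the corollary has to be read with $N$ infinite, exactly as Theorem~\ref{main-thm-1} requires.
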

Examples for subgroups of  the mapping class groups included in this result are subgroups generated by Dahmani--Guirardel--Osin (\cite{DGO}) and free normal subgroups generated in \cite{CMM}.

Lastly, we have the following theorem, and as a corollary we recover the main theorem of \cite{BGGGS}: 
\begin{theorem}\label{main-thm-2}
	Let $G$ be a group acting geometrically on a CAT(0) space $X$ with isolated flats. Then, the identity map of $X$ extends to a $G$-equivariant homeomorphism from the CAT(0) boundary of $X$ to the relatively hierarchically hyperbolic boundary of $X$.
\end{theorem}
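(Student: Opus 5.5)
The plan is to use the structure theory of CAT(0) groups with isolated flats, due to Hruska--Kleiner, together with the relative HHS structure on relatively hyperbolic groups. By Hruska--Kleiner, $G$ is hyperbolic relative to the virtually abelian stabilizers of the maximal flats. The space $X$ then carries a relatively hierarchically hyperbolic structure, built as follows. The index set consists of a top element $S$ together with the collection $\mathcal F$ of maximal flats (or $G$-translates of their peripheral cosets). The hyperbolic space $\calC S$ is the coned-off space, namely $X$ with each flat coned off, which is hyperbolic by relative hyperbolicity. For each flat $F \in \mathcal F$, we set $\calC F = F$ itself, which is a non-hyperbolic Euclidean space; this is why the structure is only \emph{relatively} HHS. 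Projections to flats are given by closest-point projection. I will first record this structure and describe its boundary, adapting the HHS boundary of \cite{DHSbound}. Because distinct flats are pairwise transverse and there are no orthogonality relations, a boundary point is either a point of $\partial \calC S$, or a point of the visual sphere $\partial F$ for a single flat $F$. So, as a set, $\partial_{rHH}X = \partial \calC S \sqcup \bigsqcup_{F} \partial F$.

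Next I define $\Phi\co \partial X \to \partial_{rHH}X$ using the Hruska--Kleiner decomposition of the visual boundary. If a geodesic ray $\gamma$ stays within bounded distance of some flat $F$ (equivalently, it is asymptotic to a ray in $F$), set $\Phi(\gamma)$ to be its endpoint in $\partial F$. Otherwise, $\gamma$ makes unbounded progress in $\calC S$: by isolated flats, a ray that does not eventually track a flat has its image in the coned-off space unbounded and quasigeodesic, so it converges to a point of $\partial \calC S$, and we set $\Phi(\gamma)$ to be that point. Bijectivity follows from three facts. Rays asymptotic in $X$ have the same image. Distinct points of $\partial F$ stay distinct, since flats are isometrically embedded and convex. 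Each point of $\partial \calC S$ lifts to a ray in $X$, obtained by taking limits of geodesics in $X$ whose coned-off images are quasigeodesics, using relative thinness of triangles. Equivariance is immediate from naturality of all the constructions, and the map restricts to the identity on $X$ at the level of compactifications.

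The main step is continuity of $\widehat\Phi \co X\cup\partial X \to X\cup \partial_{rHH}X$. Since $X \cup \partial X$ is compact and the target is Hausdorff (which I would verify from the topology defined via projections), it suffices to show that $\widehat\Phi$ is continuous, for then it is a homeomorphism. I would check sequential continuity in two cases. In the first case, a sequence $x_n \to \xi \in \partial F$. Then eventually the geodesics from a basepoint to $x_n$ fellow-travel a long ray in $F$. Hence their projections to $F$ converge to $\xi$ in $F \cup \partial F$, and their projections to $\calC S$ stay bounded near the cone point of $F$. This is exactly convergence in the rHHS topology, whose neighborhoods of $\xi$ are defined by projection to $F$ together with the remote-projection condition. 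In the second case, $x_n \to \eta$ where $\eta$ is not a flat point. Here I would use the Hruska--Kleiner fact that such rays penetrate each flat only boundedly, with penetration length uniform on compact pieces of the ray. This gives that the coned-off images converge to $\Phi(\eta)$ in $\calC S$. The expected obstacle is matching the HHS-boundary neighborhood basis at points of $\partial \calC S$, which are defined via remote projections onto domains $F$ with $\rho^F_S$ far away, with CAT(0) cone neighborhoods. Handling this requires uniform control of how long geodesics can fellow-travel flats, which is the isolated-flats bounded-coarse-intersection property. This is where I would invest the detailed estimates.
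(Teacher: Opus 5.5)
Your architecture matches the paper's: the same relatively HHS structure $\calF\cup\{\hat X\}$, the decomposition $\partial_{RHH}X=\partial\hat X\sqcup\bigsqcup_F\partial F$, the same two-type definition of the map, and the compact-to-Hausdorff reduction. But the two steps that carry the real content are missing.

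\textbf{Bijectivity.} Your three facts give well-definedness, injectivity on $\bigcup_F\Lambda(F)$, and surjectivity onto $\partial\hat X$. They do not give injectivity on the non-peripheral part, i.e.\ that two non-asymptotic rays from $x_0$, neither staying near a flat, have distinct limits in $\partial\hat X$. This is not formal: coning collapses long flat penetrations, and a priori the rays could diverge there. The paper imports this step from Tran's Lemma 5.11 \cite{tran-boundary-relation}. A direct proof would show that the two rays pass uniformly close to a common unbounded sequence of points of $X$; convexity of $t\mapsto d(\alpha(t),\alpha'(t))$ then makes them asymptotic. Relatedly, the coned-off image of such a ray is not a quasigeodesic as parametrized. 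It is only an unparametrized one after flat penetrations are replaced by cone paths, which is why the paper works with Tran's nice pairs $(c,\hat c)$.

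\textbf{Continuity.} You only treat sequences $x_n\in X$. You also need sequences $\xi_n\in\partial X$, and for these the neighborhoods are governed by boundary projections of remote points, not by closest-point projections.
\begin{itemize}
\item At $\xi\in\partial F$ you must show that $\rho^{F'}_F=\pi_F(F')\to\xi$ when $\xi_n\in\partial F'$, and that $\rho^{\hat X}_F$ of far points of an $\hat X$-quasigeodesic ray tends to $\xi$ when $\xi_n$ is non-peripheral. The paper shows both are uniformly close to $\pi_F(\alpha_n(t_n))$, where $t_n$ is the exit time of $\alpha_n$ from $N_R(F)$. It uses the bounded geodesic image property of closest-point projection to flats (Lemma~\ref{lemma-BGI}) plus consistency, together with $\pi_F(\alpha_n(t_n))\to\xi$ (Lemma~\ref{lemma-main-technical-lemma}).
\item Lemma~\ref{lemma-BGI} is needed even in your interior case. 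Fellow-travelling a long ray in $F$ says nothing about $\pi_F(x_n)$ once $[x_0,x_n]$ leaves $N_R(F)$.
\item Your claim that $\pi_{\calC S}(x_n)$ stays near the cone point of $F$ is false: the geodesic can track $F$ for time $n$ and then go arbitrarily far in $\hat X$. It is also unneeded. Orthogonality is empty, so the interior part of a basic neighborhood of a point supported on $F$ constrains only $\pi_F$.
\item At $\eta\in\partial\hat X$, sequences $\xi_n\in\partial F_n$ require the cone points of $F_n$ to converge to the image of $\eta$ in $\partial\hat X$. The paper gets this, and the case of non-peripheral $\xi_n$, from continuity of the map $f\colon\partial X\to\Delta_\infty(\hat\Gamma)$ underlying Theorem~\ref{theorem:relation-bowditch-cat(0)-boundary}. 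It does not use bounded-penetration estimates.
\end{itemize}
Finally, you cannot simply reduce boundary sequences to interior ones. That reduction would need regularity of $X\cup\partial_{RHH}X$, and only Hausdorffness is available a priori.
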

A \emph{hierarchical space} (HS) is a space with hierarchical structure, given in Section \ref{sec:HS} and an HS space is \emph{relatively} hierarchically hyperbolic if each element $U$ of the index set $\gothic S $ is either  $\sqsubseteq$-minimal or the associated coordinate space $\calC U$ is hyperbolic. (\cite[Definition 6.8]{BHS2})

Currently, we do not know the full answer to Question \ref{ques:ct-map-free-by-cyclic}.  We remark that when $\Gamma=\F\rtimes_\phi\mathbb Z$  is a non-hyperbolic free by cyclic group, Cannon--Thurston map does not exist for the \emph{Morse} boundaries (see \cite{Cordes-MorseSurvey} for a survey on Morse boundaries); since all free by cyclic groups are Morse local to global (\cite{KudPetyt}) and, by \cite{CordesRSZ}, Cannon--Thurston map does not exist for normal hyperbolic subgroups of Morse local-to-global groups. 

The more general question of for which subgroups of HHGs  Cannon--Thurston maps exist is also widely open. (see also [Question 1.1 \cite{ABR-structure-invariant}] and the discussion thereafter). When $H$ is a free right angled Artin subgroup of the mapping class group of a connected, oriented surface, the Cannon--Thurston map exists under certain conditions, by a result of Mousley \cite{Mousley}. However, in the same work, Mousley also proved the non-existence of these maps for general right angled Artin subgroups embedded in mapping class groups, under different embeddings.

As for Morse boundaries for other groups, authors in \cite{charney-sisto-ct-map-morse}  found a criterion for non-existence of Cannon--Thurston maps for normal subgroups of semi-direct products of groups using certain right angled Artin groups, where they also provide an example for the existence of a Cannon--Thurston map using a mapping torus of a right angled Artin group. However, we do not know whether or not their extensions are hierarchically hyperbolic. 

\subsection{Acknowledgements} We thank Mark Hagen for many talks regarding hierarchically hyperbolic spaces and their boundaries; we appreciate his generosity with his time and attention. We also thank Jason Behrstock for his support and encouragement. R. Tomar gratefully acknowledges the support of the postdoctoral fellowship from BICMR, Peking University.

\section{Background}
\subsection{Coarse geometry}
	In this subsection, we collect some basic notions that we shall use throughout the paper. Let $(X,d)$ be a metric space. For $K\geq 0$ and $A\subseteq X$, we denote by $N_K(A)$ the set $\{x\in X: d(x,A)\leq K\}$. For $x\in X$ and $R\geq 0$, $B_R(x)$ denotes the closed ball with center $x$ and radius $R\geq 0$. The Hausdorff distance between two subsets $A$ and $B$ of $X$ is denoted by $Hd_X(A,B)$. When $X$ is understood from the context, we will omit $X$ and simply write $Hd(A, B)$. For $E\geq 0$, the metric space $X$ is said to be $E$-quaisgeodesic if each pair of points of $X$ is joined by an $E$-quasigeodesic. 
    
    The following lemma is a basic exercise in point-set topology, so we skip its proof.
		\begin{lemma}\label{lemma-subseq-convergence}
			Suppose $Z$ is a Hausdorff topological space, $z\in Z$ and $\{z_n\}$ is a sequence in $Z$. 
			If for any subsequence $\{z_{n_k}\}_k$ of $\{z_n\}$, there exists a further subsequence $\{z_{n_{k_{l}}}\}_l$ 
			of $\{z_{n_k}\}_k$ such that $\{z_{n_{k_{l}}}\}$ converges to $z$ then $\lim z_n= z$. \qed
		\end{lemma}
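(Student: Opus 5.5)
The plan is to argue by contradiction, using only the Hausdorff property of $Z$ and the definition of convergence. Suppose $\{z_n\}$ does not converge to $z$. By definition, there is then an open neighbourhood $U$ of $z$ such that $z_n\notin U$ for infinitely many indices $n$. Listing these indices in increasing order, $n_1<n_2<\cdots$, gives a subsequence $\{z_{n_k}\}_k$ with $z_{n_k}\notin U$ for every $k$.

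By hypothesis, this subsequence has a further subsequence $\{z_{n_{k_l}}\}_l$ converging to $z$. Convergence means that all but finitely many terms $z_{n_{k_l}}$ lie in $U$. But every term of $\{z_{n_k}\}_k$, and hence every term of the further subsequence, lies outside $U$. This is a contradiction, so $z_n\to z$.

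The Hausdorff hypothesis is not needed for this implication; it only guarantees that the limit $\lim z_n$ is unique, so that writing $\lim z_n = z$ is unambiguous. There is no real obstacle here. The only point needing care is to extract the "bad" subsequence from the negation of convergence via a fixed neighbourhood $U$, rather than trying to argue directly with the given subsequences.
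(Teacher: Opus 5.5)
Your argument is correct: negating convergence yields a fixed neighbourhood $U$ of $z$ and a subsequence lying entirely outside $U$, and no further subsequence of it can converge to $z$. The paper omits the proof as a basic point-set exercise, and yours is the standard argument it intends; you are also right that the Hausdorff hypothesis is needed only to make the notation $\lim z_n = z$ unambiguous.
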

\subsection{Hierarchically hyperbolic  spaces}\label{sec:HS}
In the following definition, we use the notation $d_W(-,-)$ to denote distance
in a space $\calC W$, where $W$ is in an index set $\gS$. We will follow this convention where it will not cause confusion. Similarly, where it will not cause confusion, we
write, for example, $d_W(x,y)$ to mean $d_W(\pi_W(x), \pi_W(y))$, where $x,y\in \calX$, and $W\in\gS$, and
$\pi_W:\calX\to \calC W$ is a projection.

\begin{definition}[Hierarchical space, (relatively) hierarchically hyperbolic space]
    Let $E>0$ be a constant and $\calX$ be a $(E,E)$-quasigeodesic space. The space $\calX$ is said to be {\em hierarchical space (HS)} if there exists an indexing set $\gS$ and a family of geodesic spaces $\{\calC U:U\in\gS\}$ such that the following conditions are satisfied:
		\begin{enumerate}
			\item ({\bf Projection}) For each $U\in\gS$ there exists a {\em projection} $\pi_U:\calX\to 2^{\calC U}$ such that, for each $x\in \calX$, $\pi_U(x)\neq \phi$ and $\diam (\pi_U(x))\leq E$. Moreover,  each $\pi_U$ is $(E,E)$-coarsely Lipschitz map $\pi_U:\calX\to \calC U$ and $E$-coarsely surjective.
			
			\item ({\bf Nesting}) if $\gS\neq \phi$, then there is a partial order $\sqsubseteq$ on $\gS$ such that $\gS$ has a unique $\sqsubseteq$-maximal element. When $V\sqsubseteq W$, we say that $V$ is {\em nested} in $W$. For $W\in \gS$, $\gS_W$ denote the set of all element of $\gS$ which are nested in $W$. For $V\sqsubsetneq W$, there is a subset $\rho_W^V\subseteq \calC W$ such that $\diam(\rho_W^V)\leq E$. There is also a projection $\rho_V^W:\calC W\to 2^{\calC V}$.
			
			\item ({\bf Orthogonality}) There exists a symmetric and antireflexive relation on $\gS$ called orthogonality; we write $V\perp W$ if they are orthogonal. Also, whenever $U\sqsubseteq V$ and $V\perp W$, we require that $U\perp W$. Finally, if $V\perp W$ then $V$ and $W$ are not $\sqsubseteq$-comparable. We denote $\gS_W^{\perp}$ the set of all $V\in\gS$ such that $V\perp W$. 
			
			\item ({\bf Containers}) We require that, for each $T\in \gS$ and $U\in \gS_T$, if $\{V\in\gS_T:V\perp U\}\neq \phi$ then there exists $W\in\gS_T\setminus\{T\}$ such that $V\sqsubseteq W$ whenever $V\in\gS_T$ and $V\perp U$. We call $W$ the {\em container} of $U$ in $T$.
			
			\item ({\bf Transversality}) If $V,W\in \gS$ are not orthogonal and neither is nested in the other, then we say that $V$ and $W$ are {\em transversal}, denoted by $V\pitchfork W$. Moreover, if $V\pitchfork W$ then there exist nonempty sets $\rho_V^W\subseteq \calC V$ and $\rho_W^V\subseteq \calC W$, each of which is of diameter at most $E$.
			
			\item ({\bf Consistency})  For all $x\in\calX$ and $U,V,W\in \gS$:
			\begin{itemize}
				\item If $V\pitchfork W$, then $\min\{d_V(x,\rho_V^W),d_W(x,\rho_W^V)\}\leq E$.
				\item If $V\sqsubseteq W$, then $\min\{d_W(x,\rho_W^V),\diam_{\calC V}(\pi_V(x)\cup\rho_V^W(\pi_W(x)))\}\leq E.$
				\item If $U\sqsubseteq V$ and either $V\pitchfork W$ or $V\sqsubseteq W$ and $U\not\perp W$, then $d_W\{\rho_W^U,\rho_W^V\}\leq E$.
			\end{itemize}
			
			\item ({\bf Finite complexity}) Any set of pairwise $\sqsubseteq$-comparable elements of $\gS$ has cardinality at most $E$. 
			
			\item ({\bf Bounded geodesic image}) For all $V,W\in\gS$ with $V\sqsubsetneq W$, and for all geodesics $\gamma$ of $ \calC W$, we require that either $\gamma\cap \mathcal N_E(\rho_W^V)\neq \phi$ or $\diam _{\calC V}(\rho_V^W(\gamma))\leq E$.
			
			\item ({\bf Partial realization}) If $\{V_j\}$ is a finite collection of pairwise orthogonal elements of $\gS$ and $p_j\in \pi_{V_j}(\calX)$, then there exists an $x\in \calX$ such that:
			\begin{itemize}
				\item $d_{V_j}(x,p_j)\leq E$.
				\item For each $j$ and each $W$, if $V_j\sqsubsetneq W$ or $V_j\pitchfork W$, $d_W(x,\rho_W^{V_j})\leq E$.
			\end{itemize}
			
			\item ({\bf Large link}) For all $W$ and $x,y\in \calX$, there exists $\{T_i\}_{i=1}^m\subset\gS_W\setminus\{W\}$, where $m$ is at most $Ed(x,y)+E$, such that for all $T\in\gS_W\setminus\{W\}$ either $T\sqsubseteq T_i$ for some $i$ or $d_T(x,y)\leq E$. Moreover, $d_W(x,\rho_W^{T_i})\leq m$ for all $i$.
			
			\item ({\bf Uniqueness}) There exists a function $\theta:[0,\infty)\to[0,\infty)$ such that, for all $r\geq 0$, if $x,y\in\calX$ and $d(x,y)\geq \theta(r)$, then there exists $V\in\gS$ such that $d_V(x,y)\geq r$.
		\end{enumerate}
\end{definition}
	
We often refer to $\gS$, together with the nesting and orthogonality relations, and the projections as a {\em hierarchical structure} for the space $\calX$. We call the elements of $\gS$ the \emph{domains} of $\gS$ and call $\rho_W^V$ the {\em relative projection} from $V$ to $W$. The number $E$ is called the {\em hierarchy constant} for $\gS$. We write $(\calX,\gS)$ to denote the hierarchical space with the hierarchical structure $\gS$. If $\calC U$ is $E$-hyperbolic for all $U\in\gS$, then $(\calX,\gS)$ is {\em hierarchically hyperbolic}. If $\calC U$ is $E$-hyperbolic for all non-$\sqsubseteq$-minimal $U\in\gS$, then $(\calX,\gS)$ is {\em relatively hierarchically hyperbolic}.

A {\em hierarchical group (HG)} is a finitely generated group $G$ that acts on a hierarchical space $(\calX,\gS)$ such that
\begin{enumerate}
	\item the action of $G$ on $\calX$ is geometric,
	\item  $G$ acts on $\gS$ by a $\sqsubseteq$-,$\perp$-, and $\pitchfork$-preserving bijection, and $\gS$ has finitely many $G$-orbits,
	\item the action is compatible with the hierarchical structure on $\calX$, see \cite[p. 5]{petyt-spriano}, \cite[Subsection 1G]{BHS2}.
    
\end{enumerate} 

In this case, we say that $\gS$ is a hierarchical structure for the group $G$ and use the pair $(G,\gS)$ to denote the hierarchical group $G$ equipped with the specific hierarchically hyperbolic group structure $\gS$ (For a precise definition, see \cite[Subsection 1.2.3]{BHS-asymptotic-dim}). From condition (1) in the definition of an HHG, it follows that $G$ is quasiisometric to $\calX$. From here, one can give a hierarchical structure on the Cayley graph of $G$, and the left action of $G$ on the Cayley graph also satisfies all the conditions for being an HHG. Thus, to define an HHG, one can use a Cayley graph of $G$ itself. Also, it is easy to show that the definition of HHG does not depend on the choice of the Cayley graph. Suppose $(\calX,\gS)$ is a hierarchically hyperbolic space (HHS) or a relative hierarchically hyperbolic space (RHHS). In that case, we say that $(G,\gS)$ is a {\em hierarchically hyperbolic group} (HHG) or a {\em relative hierarchically hyperbolic group} (RHHG).

\begin{remark}
    \label{example-relhhg}
    Let $X$ be a CAT(0) space with isolated flat property. Then, by Theorem \ref{theorem-cat(0)-isolated-flat}, $X$ is hyperbolic relative to a collection of flats $\calF$. We now describe a relative HHS structure on $X$.

    \begin{itemize}
        \item {\bf Index set.} Let $\hat{X}$ denote the coned-off space with respect to $\calF$. Define $$\gS:=\calF \cup\hat{X}.$$
        \item {\bf Relations.} For all $F\neq F'\in\calF$, we declare $F\pitchfork F'$, and $F\sqsubsetneq \hat{X}$ for all $F\in \calF$ (the orthogonality relation is empty and there is no other nesting).
        \item For $F\in\calF$, we define $\calC F=F$, and for $\hat{X}$, we declare $\calC \hat{X}=\hat{X}$.
        \item {\bf Projections.} The projection $\pi_{\hat{X}}:X\to\hat{X}$ is the inclusion. For $F\in \calF$, we define $\pi_F:X\to F$ to be the closest point projection. Note that the projection maps are coarsely surjective.
        \item For $F\neq F'$, we define $\rho_F^{F'}=\pi_F(F')$ (since $X$ is hyperbolic relative to $\calF$, this is uniformly bounded).
        \item For $F\sqsubsetneq \hat{X}$, define $\rho_{\hat{X}}^F$ to be the cone point corresponding to $F$.
        \item For each $F\in\calF$, let $\rho_F^{\hat{X}}:\hat{X}\to F$ be defined by $\rho_F^{\hat{X}}(x)=\pi_F(x)$ for $x\in X$, while $\rho_F^{\hat{X}}(x)=\pi_F(F')$ whenever $x$ lies in the cone on $F'\in\calF.$
    \end{itemize}
    By \cite[Theorem 9.3]{BHS2}, we see that $(X,\gS)$ is a relatively hierarchically hyperbolic space.
\end{remark}

\subsection{Boundaries of spaces and groups}

We do not recall the definitions of Gromov and Morse boundaries here; instead, we will list properties we use inside proofs. For the details, we refer the reader to the survey \cite{Kapovich-bdry} for the Gromov boundary and \cite{Cordes-MorseSurvey} for the Morse boundary. 

\subsubsection{Boundaries of CAT(0) spaces and groups}
For the definition of CAT(0) spaces and their properties, one is referred to \cite{BH}. A {\em flat} in a CAT(0) space is an isometrically embedded copy of $\EE^n$ for $n\geq 2$.
\begin{definition}\label{defn;cat(0)-groups-with-isolated-flats}
    Let $G$ be a group that acts geometrically on a CAT(0) space $X$. The space $X$ has {\em isolated flats property} if $X$ is not quasiisometric to $\EE^n$, and there is a $G$-invariant collection of flats $\calF$ satisfying the following:
    \begin{enumerate}
        \item There exists a constant $D\geq 0$ such that each flat in $X$ lies in a $D$-neighborhood of some $F\in \calF$.
        \item For every $r\geq 0$ there exists $k(r)$ such that, for any two distinct flats $F,F'\in\calF$, $\diam_{X}(N_r(F)\cap N_r(F'))\leq k(r)$.
    \end{enumerate}
    
    A group $G$ is said to be CAT(0) {\em with isolated flat property} if $G$ acts geometrically on a CAT(0) space with isolated flat property. 
\end{definition}
\begin{theorem}\cite[Theorem 1.2.1, 1.2.2]{hruska-kleiner-isolated-flat}\label{theorem-cat(0)-isolated-flat}
    Let $G$ be a group that acts geometrically on a CAT(0) space $X$. 
    
    (1) The following are equivalent:
    \begin{enumerate}[label=(\alph*)]
        \item The space $X$ has the isolated flat property.
        \item The space $X$ is hyperbolic relative to a collection of flats.
        \item The groups $G$ are hyperbolic relative to a collection of virtually free abelian groups of rank at least $2$.
    \end{enumerate}

    (2) If $X$ has the isolated flat property, then the visual boundary of $X$ is a group invariant of $G$.
\end{theorem}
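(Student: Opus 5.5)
The statement immediately above is Theorem \ref{theorem-cat(0)-isolated-flat}, which is quoted from \cite{hruska-kleiner-isolated-flat}. In the paper it is an imported result and is not reproved. Still, here is how I would prove it.

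For part (1), I would first prove (a)$\Rightarrow$(b) by verifying the definition of relative hyperbolicity, say via Druţu--Sapir's tree-graded asymptotic cones or Bowditch's fine hyperbolic coned-off graph. The isolation condition (2) of Definition \ref{defn;cat(0)-groups-with-isolated-flats} gives that distinct flats of $\calF$ have uniformly bounded coarse intersection. This is the bounded coset penetration/isolation input. The first condition, together with a flat-plane-type theorem, gives thinness of triangles away from the flats. That is the key technical input, and I would do it in two steps:
\begin{enumerate}
\item If geodesic triangles are not uniformly relatively thin, a rescaling/limiting argument produces large ``fat'' regions.
\item Cocompactness lets us translate these regions back and pass to a limit, producing a flat half-plane or flat not close to any member of $\calF$. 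This contradicts condition (1).
\end{enumerate}

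The implication (b)$\Rightarrow$(c) then follows from cocompactness. The stabilizers of flats in $\calF$ act cocompactly on those flats, so they are virtually $\mathbb{Z}^n$ with $n\geq 2$ by Bieberbach. There are finitely many orbits, and relative hyperbolicity of the space transfers to the group. For (c)$\Rightarrow$(a), I would use the flat torus theorem: each peripheral virtually abelian subgroup of rank $n$ acts cocompactly on some flat $\EE^n$, and the $G$-translates of these flats form $\calF$. Isolation follows from the bounded coarse intersection of distinct peripheral cosets. Condition (1) holds because any flat in $X$ is quasi-isometrically embedded in a relatively hyperbolic group, so it lies uniformly close to a peripheral coset, as in Druţu--Sapir.

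For part (2), let $G$ act geometrically on two CAT(0) spaces $X$ and $Y$ with isolated flats. I would take the $G$-equivariant quasi-isometry $X\to Y$ and show that it induces a homeomorphism of visual boundaries. A geodesic ray either stays close to a single flat (and eventually lies in its boundary sphere) or leaves it, and in the second case its behavior is controlled by relative thinness. Boundaries of flats correspond to each other as spheres via the induced affine-bounded maps between peripheral flats, and the remaining points correspond through the Bowditch boundary of $(G,\mathcal P)$. The main obstacle is continuity at points in the boundary spheres of flats. There one must show that sequences of rays exiting through nearby flats converge consistently in both spaces; I would use the isolation constant $k(r)$ to control these shadows.
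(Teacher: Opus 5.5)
The paper gives no proof of this statement. It is quoted from \cite{hruska-kleiner-isolated-flat}, so the only comparison available is with that source. Your outline broadly follows its architecture: thin triangles relative to flats give relative hyperbolicity; the flat torus theorem plus the Dru\c{t}u--Sapir quasi-flat theorem give the converse; an equivariant quasi-isometry that is coarsely affine on peripheral flats gives the boundary statement. Your (b)$\Rightarrow$(c) and (c)$\Rightarrow$(a) are fine as sketches. In (b)$\Rightarrow$(c), finitely many orbits and cocompact stabilizers come from local finiteness of $\calF$, which follows from bounded coarse intersection and properness.

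The gap is in (a)$\Rightarrow$(b), which carries all the weight. As you describe it, the thin-triangle argument only ever contradicts condition (1), and isolation is used only for bounded coarse intersection. That cannot work. Let $X=T\times\mathbb{R}$, with $T$ the $4$-valent tree, acted on geometrically by $\mathbb{F}_2\times\ZZ$, and let $\calF$ be the invariant set of all flats $\ell\times\mathbb{R}$. Condition (1) holds with $D=0$: every flat, and every flat half-plane, lies in a member of $\calF$. So an argument whose only contradiction is with (1) would show that this $X$ has triangles thin relative to flats, and that is false. Indeed, $X$ is not hyperbolic relative to any family of flats. Concretely, take a tripod$\,\times\,\mathbb{R}$ in $X$ and a triangle with one vertex in each of its three half-planes, at distance $L$ from the axis and at heights $0,L,-L$. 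Its fat part has size comparable to $L$ and is not close to any single flat. Note also that ``thin away from the flats'' is vacuous here, since $\bigcup\calF=X$.

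The missing idea is a local-to-global step that uses condition (2). A limit on compacta only gives information on bounded regions, for example that a bounded piece of the fat region is near some flat, hence by (1) near some member of $\calF$. What must be shown is that the \emph{whole} fat part of each triangle lies uniformly close to a \emph{single} $F\in\calF$. This needs isolation: along the fat region the relevant member of $\calF$ cannot change, since otherwise two distinct members of $\calF$ would coarsely intersect in a large set. It is this ``thin relative to one flat'' statement that yields the fat-polygon condition in the Dru\c{t}u--Sapir criterion.

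Part (2) has a softer hole, at exactly the point you flag. The Bowditch-boundary route (Theorem~\ref{theorem:relation-bowditch-cat(0)-boundary}) is fine at non-peripheral points, because convergence in the quotient to a one-point class lifts to $\partial Y$ by compactness. At $\xi\in\partial F$, however, the quotient only says that the images accumulate on the sphere $\partial F'$, and ``use $k(r)$ to control shadows'' is not yet an argument. You need three things:
\begin{enumerate}
\item a quasi-isometry-invariant description of convergence to $\xi$, such as Lemma~\ref{lemma-main-technical-lemma}: the projections to $F$ of the exit points of $\alpha_n$ from $N_R(F)$ converge to $\xi$;
\item the fact that closest-point projections to peripheral flats are coarsely preserved by the quasi-isometry (cf.\ Lemma~\ref{lemma-BGI});
\item the converse in $Y$: convergence of exit projections to $\xi'$ forces the rays to converge to $\xi'$.
\end{enumerate}
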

For a proper CAT(0) space $X$, we denote the visual boundary of $X$ by $\partial X$. Define $\bar{X}=X\cup\partial X$ and fix a base point $x_0\in X$. For $x\in \bar{X}$, we denote by $\gamma_x$ the geodesic joining $x_0$ and $x$. Then, the topology on $\bar{X}$ is defined as follows. Given $r>0$ and $\epsilon>0$, a neighborhood at a point of $\partial X$ represented by a geodesic ray $c$ is given by 
$$U(c,r,\epsilon)=\{x\in\bar{X}: d(x_0,x)\geq r \text{ and } d(\gamma_x(r),c(r))\leq \epsilon\}.$$ Then, the set of open ball in $X$ together with the collection $U(c,r,\epsilon)$, where $c$ is a geodesic ray with $c(0)=x_0$, is a basis for the topology on $\bar{X}$. One is referred to \cite[Chapter II]{BH} for the details.

\begin{lemma}\label{lemma-cat(0)-nbhd}
    Given $p\in\partial X$, there exists a sequence of neighborhoods $\{K_n\}$ of $p$ such that, for any other neighborhood $K$ of $p$, $K_n\subseteq K$ for all sufficiently large $n$.
\end{lemma}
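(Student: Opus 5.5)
The plan is to take the sets $K_n := U(c,n,1/n)$ for $n\geq 1$, where $c$ is the geodesic ray from the fixed base point $x_0$ representing $p$. (One could also intersect with closed-ball complements, but that is not needed.) Each $K_n$ contains $p$, since $\gamma_p=c$. Each $K_n$ is a basic neighborhood of $p$ in the topology on $\bar X$ described above. So it remains to show that every neighborhood $K$ of $p$ contains $K_n$ for all large $n$.

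First I reduce to basic neighborhoods. Let $K$ be any neighborhood of $p$. By definition of the topology, $K$ contains a basic open set containing $p$. Open balls of $X$ do not contain boundary points, so this basic set must be of the form $U(c',r,\epsilon)$ with $c'(0)=x_0$ and $p\in U(c',r,\epsilon)$. Membership of $p$ means $d(c(r),c'(r))\leq\epsilon$. If the neighborhoods are taken to be open (interior-type, with strict inequality), there is slack: $d(c(r),c'(r))=\epsilon_0<\epsilon$. If non-strict, I would instead note the standard fact that $\{U(c,r,\epsilon)\}_{r,\epsilon}$ already forms a neighborhood basis at $p$ (Bridson--Haefliger, Chapter II.8), and use $U(c,r,\epsilon)\subseteq K$ directly. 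In that case one may take $c'=c$.

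The key step is the nesting: $U(c,n,1/n)\subseteq U(c,r,\epsilon)$ once $n\geq r$ and $1/n\leq \epsilon$ (with a slack adjustment if $c'\neq c$). Take $x\in U(c,n,1/n)$. Then $d(x_0,x)\geq n\geq r$ and $d(\gamma_x(n),c(n))\leq 1/n$. By convexity of the CAT(0) metric, applied to the geodesics $\gamma_x$ and $c$ issuing from the common point $x_0$, the function $t\mapsto d(\gamma_x(t),c(t))$ is convex and vanishes at $0$. Hence for $t=r\leq n$,
$$d(\gamma_x(r),c(r))\leq \tfrac{r}{n}\,d(\gamma_x(n),c(n))\leq \tfrac{r}{n^2}\leq \tfrac1n\leq\epsilon .$$
So $x\in U(c,r,\epsilon)\subseteq K$. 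In the case $c'\neq c$, the triangle inequality gives $d(\gamma_x(r),c'(r))\leq r/n^2+\epsilon_0\leq\epsilon$ for large $n$.

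The main obstacle is purely the bookkeeping of which basic open sets contain $p$, i.e.\ ensuring that a neighborhood of $p$ contains one centered at $c$ itself. I will handle this by citing the neighborhood-basis statement from \cite[Chapter II.8]{BH}, or by using the slack argument above. The convexity estimate is the only geometric input.
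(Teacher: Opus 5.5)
Your proposal is correct and is essentially the paper's argument: the paper also takes $K_n=U(c,n,1/n)$, reduces to basic neighborhoods $U(c,r,\epsilon)$ centered at the ray $c$ representing $p$, and asserts that $K_m\subseteq U(c,r,\epsilon)$ once $m>r$ and $1/m<\epsilon$. Your CAT(0) convexity estimate $d(\gamma_x(r),c(r))\le \frac{r}{n}\,d(\gamma_x(n),c(n))$ supplies the verification the paper leaves as ``easy to check,'' and your treatment of basic sets centered at another ray $c'$ makes explicit the reduction to neighborhoods centered at $c$ that the paper takes for granted.
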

\begin{proof}
    It is sufficient to prove the lemma for the neighborhood basis defined above around $p$. Let $c$ be the geodesic ray that starts at $x_0$. Define $K_n:=U(c,n,1/n).$ Let $U(c,r,\epsilon)$ be any other neighborhood basis around $p$. Choose $n$ such that $1/n<\epsilon$ and $n>r$. Then, it is easy to check that $K_m\subseteq U(c,r,\epsilon)$ for all $m\geq n$.
\end{proof}
The following lemma easily follows from the definition of the convergence of a sequence in a CAT(0) space. 
\begin{lemma}\label{lemma:weakly-visible}
    Let $X$ be a CAT(0) space. Let $\{x_n\}$ be a sequence of points in $X$ that converges to a point $\xi\in\partial X$. If $\{y_n\}$ is a sequence in $X$ such that $d(x_n,y_n)$ is uniformly bounded for all $n$, then $\{y_n\}$ also converges to $\xi$.   \qed
\end{lemma}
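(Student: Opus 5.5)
The plan is to work directly with the basic neighborhoods $U(c,r,\epsilon)$ of $\xi$ and to use the convexity of the CAT(0) metric to control geodesics issuing from the base point $x_0$. Let $c$ be the geodesic ray with $c(0)=x_0$ representing $\xi$, and let $C\geq 0$ be a uniform bound with $d(x_n,y_n)\leq C$ for all $n$. Since the open balls of $X$ are not neighborhoods of $\xi$, it suffices to show the following: for every $r>0$ and $\epsilon>0$, the point $y_n$ lies in $U(c,r,\epsilon)$ for all sufficiently large $n$.

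Fix $r$ and $\epsilon$. First, since $x_n\to\xi\in\partial X$, we have $d(x_0,x_n)\to\infty$, so $d(x_0,y_n)\geq d(x_0,x_n)-C\to\infty$ as well. Next, choose an auxiliary radius $R>r$, to be fixed later in terms of $r$, $\epsilon$ and $C$. Because $U(c,R,\epsilon/2)$ is a neighborhood of $\xi$, for all large $n$ we have $d(x_0,x_n)\geq R$, $d(x_0,y_n)\geq R$, and $d(\gamma_{x_n}(r),c(r))\leq \epsilon/2$. Here we use the standard fact, implicit in the basis description, that if $d(\gamma_x(R),c(R))\leq\delta$ with $R\geq r$, then $d(\gamma_x(r),c(r))\leq\delta$, by convexity of the metric on the two geodesics issuing from $x_0$.

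The key step is to compare $\gamma_{x_n}$ and $\gamma_{y_n}$ at time $r$. Consider the geodesic triangle with vertices $x_0$, $x_n$, $y_n$. By convexity of the distance function in a CAT(0) space (\cite[II.2.2]{BH}), applied to the geodesics $\gamma_{x_n}$ and $\gamma_{y_n}$ parametrized proportionally to arc length on $[0,1]$, the points at parameter $t$ are at distance at most $t\,d(x_n,y_n)\leq tC$. Take $t$ equal to $r/d(x_0,x_n)$. The point $\gamma_{x_n}(r)$ is then compared with the point of $\gamma_{y_n}$ at distance $r\,d(x_0,y_n)/d(x_0,x_n)$ from $x_0$. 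This point differs from $\gamma_{y_n}(r)$ by at most $r\,C/d(x_0,x_n)$. Hence
\[ d(\gamma_{x_n}(r),\gamma_{y_n}(r))\leq \frac{2rC}{d(x_0,x_n)}, \]
and this quantity is at most $\epsilon/2$ once $d(x_0,x_n)\geq 4rC/\epsilon$. It is therefore enough to choose $R\geq\max\{r,4rC/\epsilon\}$.

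Combining the two estimates with the triangle inequality gives $d(\gamma_{y_n}(r),c(r))\leq\epsilon$ and $d(x_0,y_n)\geq r$, so $y_n\in U(c,r,\epsilon)$ for all large $n$, as required. The only point needing care is the reparametrization bookkeeping in the comparison estimate; the rest is a direct use of the definition of the cone topology.
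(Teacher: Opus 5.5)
Your argument is correct. The reparametrization step uses $|d(x_0,y_n)-d(x_0,x_n)|\le C$, and with it the convexity estimate $d(\gamma_{x_n}(r),\gamma_{y_n}(r))\le 2rC/d(x_0,x_n)$ puts $y_n$ in $U(c,r,\epsilon)$ for all large $n$. The paper gives no written proof, only the remark that the lemma follows directly from the definition of convergence in the cone topology, and your proposal carries out exactly that route in detail.
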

    
\subsubsection{Boundary of hierarchically hyperbolic spaces }\label{sec:bdryHHS}
In \cite{DHSbound}, the authors introduced the notion of a boundary of an HHS. From \cite[Section 2]{DHSbound}, we recall the definition of the hierarchical boundary and its topology. For $U\in \gS$, $\partial\calC U$ denotes the Gromov boundary \cite{gromov-essay} of $\calC U$.
\begin{definition}
	Let $(\calX,\gS)$ be an HHS. A subset $\overline{S}\subset\gS$ is said to be a {\em support set} if $S_i\perp S_j$ for all $S_i,S_j\in \overline{S}$. Given a support set $\overline{S}$, the {\em boundary point with support $\overline{S}$} is a formal sum $p=\sum_{S\in\overline{S}} a_S^pp_S$, where $p_S\in\partial\calC S$, $a_S>0$, and $\sum_{S\in\overline{S}}a_S^p=1$. By \cite[Lemma 2.1]{BHS2}, such sums are necessarily finite. We denote the support of the boundary point $p$ by Supp($p$). The {\em hierarchical boundary} $\partial(\calX,\gS)$ of $(\calX,\gS)$ is the set of all boundary points.
\end{definition}
When the specific HHS structure is clear, we write $\partial_{HH}\calX$ instead of $\partial(\calX,\gS)$. For an HHG $(G,\gS)$, let $(\calX,\gS)$ be a corresponding HHS. Then, the hierarchical boundary $\partial(G,\gS)$ of $(G,\gS)$ is defined to the hierarchical boundary of $(\calX,\gS)$. Similarly, when the HHG structure on $G$ is clear from the context, we denote the HHG boundary of $G$ by $\partial_{HH}G.$

{\bf Topology on $\partial_{HH}\calX$.} Before defining the topology, we need the notion of a remote point and boundary projection.

\begin{definition}
	[Remote point] A point $q\in\partial_{HH}\calX$ is called a {\em remote point} with respect to a support set $\overline{S}$ if Supp($q$)$\cap\overline{S}=\emptyset$ and, for all $S\in\overline{S}$, there exists $T_S\in$ Supp($q$) such that $S\notperp T_S$. The set of all remote points of $\partial_{HH} \calX$ with respect to $\overline{S}$ is denoted by $\partial^{rem}_{\overline{S}}(\calX)$.
\end{definition}

For a support set $\overline{S}$, we denote $\overline{S}$$^{\perp}$ the set of all $U\in\gS$ such that $U\perp V$ for all $V\in\overline{S}$. Given a support set $\overline{S}$ and $q\in\partial^{rem}_{\overline{S}}\calX$, let $\overline{S}_q$ denote the union of $\overline{S}$ and the set of all $U\in\overline{S}$$^\perp$ such that $U$ is not orthogonal to some $T_U\in$ Supp($q$).
\begin{definition}
	[Boundary projection]\label{defn:boudnary-projection}  Define a {\em boundary projection} $\partial\pi_{\overline{S}}(q)\in \prod_{S\in \overline{S}_q}\partial\calC S$
	as follows. Let $q=\sum_{T\in\text{Supp}(q)}a_T^q q_T$. For each $S\in \overline{S}_q$ , let $T_S\in\text{Supp}(q)$ be chosen so that $S$ and $T_S$ are not orthogonal. Define the
	$S$-coordinate $(\partial\pi_{\overline{S}}(q))_S$ of $\partial\pi_{\overline{S}}(q)$ as follows:
	\begin{enumerate}
		\item  If $T_S\sqsubseteq S$ or $T_S\pitchfork S$, then $(\partial\pi_{\overline{S}}(q))_S=\rho_{S}^{T_S}$.
		\item  Otherwise, $S\sqsubseteq T_S$. Choose a $(1,20E)$–quasigeodesic ray $\gamma$ in $\calC T_S$ joining $\rho_{T_S}^S$ to $q_{T_S}$. By the bounded geodesic image axiom, there exists $x\in\gamma$
		such that $\rho_{S}^{T_S}$
		is coarsely constant on the subray of $\gamma$ beginning at $x$. Let
		$(\partial\pi_{\overline{S}}(q))_S=\rho_{S}^{T_S}(x)$.
	\end{enumerate}
\end{definition}
The map $\partial\pi_{\overline{S}}$ is coarsely independent of the choice of $\{T_S\}_{S\in\overline{S}}$ (see \cite[Lemma 2.1]{DHSbound}). Now, we are ready to define the topology on $\partial_{HH}\calX$.

Fix a base point $x_0\in\calX$. We define a neighborhood basis for each point $p=\sum_{S\in\overline{S}}a_S^pp_S$, where $p_S\in\partial\calC S$ for each $S\in\text{Supp}(p)=\overline{S}$. For each $S\in\gS$, choose a neighborhood $U_S$ of $p_S$ in $\calC S\cup\partial\calC S$, and choose $\epsilon>0$. Now, we define the following three subsets of $\partial_{HH}\calX$ which contribute in the definition of a neighborhood around $p$.
\begin{definition}
	[Remote part] The {\em remote part} $\mathcal{B}^{rem}_{\{U_S\},\epsilon}(p)$ is the set of all $q\in\partial_{HH}\calX$ such that the following hold:
	\begin{enumerate}
		\item For all $S\in\overline{S}$, $(\partial\pi_{\overline{S}}(q))_S\in U_S$.
		\item $\sum_{T\in\overline{S}^{\perp}}a_T^q<\epsilon$.
		\item For all $S\in \overline{S}_q$ and $S'\in\overline{S}$, $\bigg|\dfrac{d_S(x_0,(\partial\pi_{\overline{S}}(q))_S)}{d_{S'}(x_0,(\partial\pi_{\overline{S}}(q))_{S'})}-\dfrac{a_S^p}{a_{S'}^p}\bigg|<\epsilon$.
	\end{enumerate} 
\end{definition}
\begin{definition}
	[Non-remote part] The {\em non-remote part} $\mathcal{B}^{non}_{\{U_S\},\epsilon}(p)$ is the set of points $q=\sum_{T\in\text{Supp}(q)}a_T^q q_T\in \partial_{HH}\calX-\partial^{rem}_{\overline{S}}\calX$ such that the following hold, where $A=\overline{S}\cap\text{Supp}(q)$:
	\begin{enumerate}
		\item For all $T\in A$, $q_T\in U_T$.
		\item For all $T\in A$, $|a_T^p-a_T^q|<\epsilon.$
		\item $\sum_{V\in\text{Supp}(q)- A}a_V^q<\epsilon$.
	\end{enumerate}
\end{definition}
\begin{definition}
	[Interior part] The {\em interior part} $\mathcal{B}^{int}_{\{U_S\},\epsilon}(p)$ is the points $x\in\calX$ such that the following conditions are satisfied:
	\begin{enumerate}
		\item For all $S\in\overline{S}$, $\pi_S(x)\in U_S$.
		\item For all $S,S'\in\overline{S}$, $\bigg|\dfrac{d_S(x_0,\pi_S(x))}{d_{S'}(x_0,\pi_{S'}(x)}-\dfrac{a_S^p}{a_{S'}^p}\bigg|<\epsilon.$
		\item For all $S\in\overline{S}$ and $T\in \overline{S}^{\perp}$, $\dfrac{d_T(x_0,x)}{d_S(x_0,x)}<\epsilon$.
	\end{enumerate}
\end{definition}
\begin{definition}[Topology on $\calX\cup\partial_{HH}\calX$]\label{def:toporhh}
	 For each $p\in\partial_{HH}\calX$ with support $\overline{S}$, and $\{U_S\}_{S\in\overline{S}}$, $\epsilon>0$ as above, let $$B_{\{U_S\},\epsilon}(p):=\mathcal{B}^{rem}_{\{U_S\},\epsilon}(p)\cup \mathcal{B}^{non}_{\{U_S\},\epsilon}(p)\cup \mathcal{B}^{int}_{\{U_S\},\epsilon}(p).$$ We declare the set of all such $B_{\{U_S\},\epsilon}(p)$ to form a neighborhood basis around $p$. Also, we include the open subsets of $\calX$ in the topology of $\calX\cup\partial_{HH}\calX$.
\end{definition}

\begin{theorem}\label{theorem:property-hhs-bdry}
	\textup{(\cite[Theorem 3.4]{DHSbound})} If $\calX$ is proper, then $\overline{\calX}:=\calX\cup\partial_{HH}\calX$ is a compact metrizable space. Moreover, $\calX$ is dense in $\overline{\calX}$.
\end{theorem}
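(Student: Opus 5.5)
Theorem~\ref{theorem:property-hhs-bdry} is cited from \cite[Theorem 3.4]{DHSbound}. The plan below reconstructs the argument of \cite{DHSbound}; it is not a new proof.

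\emph{Step 1: the sets $B_{\{U_S\},\epsilon}(p)$ generate a topology.} Take $q$ in a basic set around $p$. One shows that a smaller basic set around $q$ sits inside it. This uses the coarse independence of the boundary projection $\partial\pi_{\overline{S}}$ from the choice of $T_S$ \cite[Lemma 2.1]{DHSbound}. It also uses the consistency and bounded geodesic image axioms, which control how $\partial\pi_{\overline{S}}$ changes when the support varies.

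\emph{Step 2: Hausdorff and second countable.} For Hausdorffness, two distinct points $p,p'$ are separated as follows.
\begin{enumerate}
\item If $\mathrm{Supp}(p)=\mathrm{Supp}(p')$, use disjoint neighborhoods in some $\partial\calC S$, or use $\epsilon$ smaller than a difference of coefficients.
\item Otherwise, some $S\in\mathrm{Supp}(p)$ lies outside $\mathrm{Supp}(p')$. Then use either the remote condition on $S$ or the mass condition that $\sum a_V<\epsilon$.
\end{enumerate}
Second countability comes from properness of $\calX$. Properness gives countably many domains up to the relevant data, and each $\calC S\cup\partial\calC S$ is separable. By Urysohn metrization, regularity plus second countability gives metrizability.

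\emph{Step 3: density of $\calX$ and compactness.} For density, given $p=\sum a_S p_S$, apply partial realization to the pairwise orthogonal domains in $\mathrm{Supp}(p)$. Choose points along quasigeodesic rays toward each $p_S$, at distances $t\,a_S$ from $x_0$. This produces $x_t\in\calX$ lying in $\mathcal{B}^{int}$ for large $t$.

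For compactness, it suffices to show sequential compactness, since the space is metrizable. Take a sequence of interior points $x_n$; boundary sequences are handled by approximating with interior points via density.
\begin{enumerate}
\item Pass to a subsequence such that, for each domain $U$, $\pi_U(x_n)$ either stays bounded or converges to a point of $\partial\calC U$.
\item The domains where $d_U(x_0,x_n)\to\infty$ at the maximal linear rate form a pairwise orthogonal family. This needs finite complexity together with large link and consistency.
\item By \cite[Lemma 2.1]{BHS2} this family is finite, and it gives a support set $\overline{S}$.
\item The coefficients are defined as limits of the normalized ratios $d_S/\sum_{S'} d_{S'}$, after a further subsequence.
\item Finally, verify convergence using the three conditions of $\mathcal{B}^{int}$.
\end{enumerate}

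\emph{Main obstacle.} The hardest step is choosing the support in this last construction. Domains with unbounded projections may be transverse or nested rather than orthogonal. One must show that the ``dominant'' ones can be selected to be orthogonal and that the others contribute negligible mass ($<\epsilon$). I would first order the domains by growth rate and use the consistency inequalities to show that two transverse domains cannot both grow comparably.
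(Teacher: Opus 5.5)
The paper gives no proof of this statement; it imports \cite[Theorem 3.4]{DHSbound}. Your reconstruction therefore has to stand on its own, and it has a genuine gap at the core of the compactness step.

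Step 3(1) is false. The spaces $\calC U$ are generally not proper, so an unbounded sequence $\pi_U(x_n)$ need not have a subsequence that is bounded or converges in $\partial\calC U$. The growth-rate rule in 3(2)--(4) then picks the wrong support.

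Here is a concrete example. Let $G=\ZZ^2*\ZZ=\langle a,b\rangle*\langle c\rangle$ with the HHG structure of \cite[Theorem 9.1]{BHS2}. Its top domain is the coned-off Cayley graph $\hat G$, and each coset $g\ZZ^2$ contributes orthogonal line domains $X_g\perp Y_g$ recording the $a$- and $b$-coordinates of the gate. Let $x_n=a^{k_n}c^n$ with $k_n=\lfloor\log n\rfloor$.
\begin{enumerate}
\item The fastest-growing projection is $d_{\hat G}(1,x_n)\approx n$.
\item Every path in $\hat G$ from $x_n$ to $x_m$ passes through the cut points $a^{k_n}$ and $a^{k_m}$, which lie within distance $2$ of $1$. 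Hence $(x_n\cdot x_m)_1\le 2$ whenever $k_n\neq k_m$, so no subsequence of $\pi_{\hat G}(x_n)$ is bounded or converges in $\partial\hat G$.
\item On the other hand, $\pi_{X_1}(x_n)=k_n\to+\infty$ and $\pi_{Y_1}(x_n)=0$, and $Y_1$ is the only domain orthogonal to $X_1$. So $x_n$ lies in the interior part of every basic neighborhood of $p=(+\infty)\in\partial\calC X_1$, i.e.\ $x_n\to p$.
\end{enumerate}
The limit is supported on a domain growing like $\log n$, while the dominant domain has no limit at all.

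What is missing is a selection principle for the support that is not based on growth rates. A useful observation: if $\pi_V(x_n)$ converges in $\partial\calC V$, then bounded geodesic image and consistency force $\pi_U(x_n)$ to stay bounded for every fixed $U\sqsubsetneq V$. This suggests working with domains that are $\sqsubseteq$-minimal among those with unbounded projections. You then need an actual proof, in which properness of $\calX$ does real work, that projections to the selected domains converge in their Gromov boundaries. Growth rates should only determine the coefficients among the selected, pairwise orthogonal domains. Condition (3) of $\mathcal B^{int}$ must also be checked against the infinitely many, possibly varying, domains orthogonal to the support.

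There are three smaller problems.
\begin{enumerate}
\item Step 1 asserts that each basic set contains a basic set around each of its points, i.e.\ that the $B_{\{U_S\},\epsilon}(p)$ are open. Since the $\pi_U$ are only coarse maps, $\mathcal B^{int}_{\{U_S\},\epsilon}(p)$ need not be open in $\calX$. The correct, and nontrivial, statement is that these sets form neighborhood bases; this is exactly what the paper takes from \cite[Proposition 1.5]{hagen-metrizable} in Lemma~\ref{lemma-rel-bdry-haus}.
\item In Step 2 you invoke regularity for Urysohn but never argue it. Step 3 then uses metrizability both to reduce to sequences and to replace boundary sequences by nearby interior points. This is circular unless regularity is proved independently or boundary sequences are handled directly.
\item Partial realization gives no control over domains orthogonal to all of $\mathrm{Supp}(p)$, so your density points need not satisfy condition (3) of $\mathcal B^{int}$. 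For instance, in $\mathbb R\times\mathbb R$ with $p=(+\infty)_X$, the point $(t,t^2)$ satisfies the conclusion of partial realization for $X$. You need to realize a tuple that agrees with $x_0$ on $\overline{S}^{\perp}$, for example via the realization theorem or gates.
\end{enumerate}
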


\begin{definition}[Limit set]\label{definition:limit-set}
	Let $\calY$ be a subset of an HHS $(\calX,\gS)$. Then, the {\em limit set $\Lambda_{\calX}(\calY)$} of $\calY$ is defined as the set $\{p\in\partial_{HH}\calX:\text{there exists a sequence $\{y_n\}$ in $\calY$ such that $\{y_n\}$ converges to $p$}\}.$
\end{definition}

Throughout the paper, whenever a group $G$ has a boundary, for a subgroup $H$ of a group $G$, we denote by $\Lambda_G(H)$ the limit set of $H$ in the boundary of $G$; which we will define same way as in Definition \ref{definition:limit-set}. When $G$ is clear from the context, we  will write $\Lambda(H)$.

\begin{remark}
    Let $(G,\gS)$ be an HHG such that $G$ is virtually cyclic. Then, $G$ is a hyperbolic group, and hence $\partial_{HH}(G)$ has two points (\cite[Theorem 4.3]{DHSbound}). In fact, $G$ is a locally quasiconvex group, i.e. every finitely generated subgroup of $G$ is quasiconvex in $G$. Hence, every finitely generated subgroup of $G$ admits a Cannon-Thurston map. So, throughout the paper, we assume that HHGs are not virtually cyclic.
\end{remark}
For an alternate proof of the following lemma, see \cite[Corollary 5.12]{ABR-structure-invariant})
\begin{lemma}\label{lemma:limit-set-of-normal-subgroup}
    Let $(G,\gS)$ be an infinite HHG such that the hyperbolic space corresponding to the maximal domain of $\gS$ is unbounded. Let $N$ be an infinite subgroup of $G$. Then, we have the following:
    \begin{enumerate}
        \item $\Lambda(N)$ is non-empty.
        \item If $N$ is normal in $G$, then $\Lambda(N)=\partial_{HH}G$
    \end{enumerate}
\end{lemma}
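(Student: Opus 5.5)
For (1), I would use that $G$ acts on $\calX$ geometrically and $N$ is infinite, so $N$ contains elements of arbitrarily large word length. Fix $x_0\in\calX$ and choose $n_k\in N$ with $d(x_0,n_kx_0)\to\infty$. By Theorem \ref{theorem:property-hhs-bdry}, $\overline{\calX}$ is compact and metrizable, so after passing to a subsequence, $n_kx_0$ converges to some point $p\in\overline{\calX}$. The point is to show that $p$ does not lie in $\calX$. A sequence leaving every bounded set of the proper space $\calX$ cannot converge to an interior point, because open balls of $\calX$ are open in $\overline{\calX}$ (Definition \ref{def:toporhh}) and have compact closure in $\calX$. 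Hence $p\in\partial_{HH}\calX$, and so $\Lambda(N)\neq\emptyset$. I note that unboundedness of the maximal coordinate space is not needed for this part.

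For (2), I would first show that $\Lambda(N)$ is closed and $G$-invariant. Closedness comes from a diagonal argument using metrizability: if $p_j\in\Lambda(N)$ and $p_j\to p$, pick $y_j\in N x_0$ within distance $1/j$ of $p_j$ in a compatible metric. Invariance uses that the $G$-action extends to homeomorphisms of $\overline{\calX}$ (this is part of \cite{DHSbound}). Normality then gives $g\,n_kx_0 = (gn_kg^{-1})\,gx_0$, which lies at bounded distance $d(x_0,gx_0)$ from $N x_0$. So I also need the fact that sequences at bounded distance from a convergent sequence have the same limit, the HHS analogue of Lemma \ref{lemma:weakly-visible}. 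I would prove it from the topology: bounded distance in $\calX$ gives bounded distance in every $\calC U$ by the coarse Lipschitz projection axiom, and each condition defining $\mathcal B^{int}$ is stable under such perturbations once the relevant projections tend to infinity.

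The remaining and main step is to show that a nonempty closed $G$-invariant subset of $\partial_{HH}G$ is everything, i.e.\ minimality of the action. Here I would use that the maximal domain $\calC S$ is unbounded, so $G$ acts on it with loxodromics (the HHG acylindrical action on $\calC S$, \cite{BHS1}). Let $g$ be a loxodromic. Its attracting point $g^{+}\in\partial\calC S\subset\partial_{HH}G$ is a support-$\{S\}$ point. I expect points $\xi$ supported on $\{S\}$ to be dense in $\partial_{HH}G$. To get this, I would approximate an arbitrary $p$ by interior points $x\in\mathcal B^{int}(p)$ and then push $x$ to infinity along a geodesic ray in $\calC S$ chosen to avoid the $\rho_S^{U}$ for $U\in\mathrm{Supp}(p)$, so that the limit is remote and lands in $\mathcal B^{rem}(p)$.

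Given a closed invariant $L$, I would take any $q\in L$. Then $g^{n}q\to g^{+}$ for a loxodromic $g$ not fixing the relevant data, by north–south dynamics on the $\calC S$ coordinate, so $g^{+}\in L$. The fixed points of loxodromics are dense in $\partial\calC S$, so $L\supseteq\partial\calC S$, and by the density claim $L=\partial_{HH}G$. The hardest part is this north–south/density argument in the HHS topology, specifically checking the ratio conditions (3) in $\mathcal B^{rem}$. I would try to handle it via the convergence criterion of \cite[Section 3]{DHSbound}, or by citing \cite[Corollary 5.12]{ABR-structure-invariant} as a fallback.
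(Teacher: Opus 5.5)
Your part (1) is the paper's argument (compactness of $\overline{G}$), with the extra check that the limit is not an interior point. For (2) you take a genuinely different route.

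\textbf{The paper's route.} It works in the top-level space. $\Lambda_{\calC S}(N)$ is a $G$-invariant subset of $\partial\calC S$, hence all of it. For a point supported on $\{S\}$, the interior part of a basic neighbourhood is just $\pi_S^{-1}(U_S)$. So convergence in $\calC S\cup\partial\calC S$ gives convergence in $\overline{G}$, hence $\partial\calC S\subseteq\Lambda(N)$. Density of $\partial\calC S$ in $\partial_{HH}G$, cited from \cite[Corollary 6.30]{DHSbound}, finishes. That argument tacitly needs $N$ to have unbounded orbits in $\calC S$, which is where acylindricity enters: a bounded $N$-orbit would give $N$-quasi-centres $C$ and $g^kC$ arbitrarily far apart, both moved boundedly by all of $N$.

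\textbf{Your route and what it buys.} You get a point of $\Lambda(N)$ from (1), prove closedness and $G$-invariance directly in $\partial_{HH}G$, and then use loxodromics to pull that point into $\partial\calC S$. This avoids the orbit question and proves more, namely minimality of $G$ acting on $\partial_{HH}G$. The cost is the extension of the action to $\overline{G}$ and the bounded-perturbation property, both of which you handle correctly.

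\textbf{Remarks on your assessment.}

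First, the step you call hardest is easy. Nothing is orthogonal to the maximal domain $S$, so any $q\notin\partial\calC S$ is remote for $\{S\}$ with $\overline{S}_q=\{S\}$. The boundary projection of $g^nq$ is coarsely $g^n\rho_S^T$ for $T\in\mathrm{Supp}(q)$, and the ratio condition reads $1=1$. Hence $g^nq\to g^+$ simply because $g^n\rho_S^T\to g^+$ in $\calC S\cup\partial\calC S$. Likewise the subspace topology on $\partial\calC S$ is the Gromov topology, because the non-remote condition reduces to $q_S\in U_S$. So density of loxodromic fixed points passes to the closure in $\partial_{HH}G$.

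Second, minimality fails when $G$ is virtually cyclic ($\mathbb{Z}$ fixes both of its boundary points). You must therefore use that the action on $\calC S$ is non-elementary, which acylindricity gives in all other cases. In the virtually cyclic case $N$ has finite index and the lemma is trivial.

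Third, the genuinely delicate input in both proofs is density of $\partial\calC S$ in $\partial_{HH}G$, and your sketch of it is not yet a proof. A point of $\mathcal{B}^{int}(p)$ may project close to $\rho_S^U$, so you would need realization to move it away while keeping its $\calC U$-coordinates. The remote conditions also involve every domain of $\overline{S}^\perp$, all of which lie in $\overline{S}_\xi$. Cite \cite{DHSbound} for this, as the paper does. Your fallback \cite[Corollary 5.12]{ABR-structure-invariant} is essentially the lemma itself, not a source for this one step.
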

\begin{proof}
    (1) Since $N$ is infinite, there exists an unbounded sequence $\{h_n\}\subset N$. By Theorem \ref{theorem:property-hhs-bdry}, $\overline{G}$ is compact. Hence, $\{h_n\}$ converges to a point in $\partial_{HH}(G)$. Hence, $\Lambda(N)$ is non-empty. 

    (2) Let $S$ be the unique $\sqsubseteq$-maximal element of $\gS$. By our assumption, the hyperbolic space $\calC S$ is unbounded. 
     Since $G$ acts on itself coboundedly, $G$ acts coboundedly on $\calC S$. Since the orbit of $G$ for the action on $\calC S$ is cobounded in $\calC S$ the limit set $\Lambda_{\calC S}(G)$ is equal to $\partial \calC S$; the Gromov boundary of $\calC S$. Since $N$ is a normal subgroup of $G$, the limit set $\Lambda_{\calC S}(N)$ of $N$ in $\calC S$ is $G$-invariant. This implies that $\Lambda_{\calC S} (N)=\partial \calC S$. Note that $\Lambda_{\calC S}(N)\subset \Lambda (N)$. Since, by \cite[Corollary 6.30]{DHSbound}, $\partial \calC S$ is dense in $\partial_{HH}G$, the limit set of $N$ in $\partial_{HH}G$ is equal to $\partial_{HH}G$. 
\end{proof}

\subsection{Cannon--Thurston maps} We define Cannon--Thurston maps in a more general setting. Let $G$ be a finitely generated group and let $\Gamma(G,A)$ be the Cayley graph of $G$ with respect to a finite generating set $A$. Suppose $\overline{G}:=\Gamma(G,A)\cup \partial G$ is a compactification of $\Gamma(G,A)$, i.e. the inclusion $\Gamma(G,A)\to \overline{G}$ is a topological embedding whose image is a dense open subset of $\overline{G}$. We define $\overline{G}$ as the {\em compactification} of $G$. We also assume the following:
\begin{enumerate}
    \item $\overline{G}$ is metrizable, and it does not depend on the choice of the Cayley graph of $G$. 

    \item Each element $g\in G$ gives rise to a homeomorphism from $\partial G\to\partial G$. 
\end{enumerate}

When $G$ is hyperbolic (resp. HHG), such a compactification of $G$ is given by the Gromov boundary of $G$ (resp. HHG boundary or Morse boundary) of $G$.

\begin{definition}\label{defn:CT-maps}
    Let $H$ be a finitely generated subgroup of a finitely generated group $G$. Suppose $H$ and $G$ admit compactifications as defined above. 
    Let $h_0\in H$ and $g_0\in G$. 
    A {\em Cannon--Thurston (CT)} map for the pair $(H,G)$ is the continuous map $ f:\partial H\to \partial G$ defined by the following:
    $$f(\lim_{i\to\infty}h_ih_0)=\lim_{i\to\infty}h_ig_0$$ provided that $f(\xi)$ does not depend on the sequence $\{h_i\}$ such that $h_ih_0\to \xi$ as $i\to\infty.$
\end{definition}

\begin{remark}
    (1) By definition, a Cannon--Thurston map, if it exists, is unique and $H$-equivariant.

    (2) When $H$ and $G$ are hierarchically hyperbolic, the Cannon--Thurston map for the pair $(H,G)$ is defined similarly by replacing $\partial H$ and $\partial G$ by $\partial_{HH}H$ and $\partial_{HH}G$, respectively.

    (3) The Cannon--Thurston map for the pair $(H,G)$ with respect to Morse boundaries is defined similarly by replacing $\partial H$ and $\partial G$ with the Morse boundaries of $H$ and $G$, respectively. Although the Morse boundary of a group need not give a compactification of the group, the definition of CT maps for Morse boundaries still makes sense.

\end{remark}
We now record the following simple lemma that is relevant to us.
\begin{lemma}\label{lemma:image of CT is the limit set}
    Let $H<G$ be infinite finitely generated groups admitting compactifications as above. If the pair $(H,G)$ admits a Cannon--Thurston map $f:\partial H\to\partial G$, then $f(\partial H)=\Lambda_G(H).$
\end{lemma}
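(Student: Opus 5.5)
We prove the two inclusions $f(\partial H)\subseteq\Lambda_G(H)$ and $\Lambda_G(H)\subseteq f(\partial H)$ separately. Both follow from the defining formula of a Cannon--Thurston map in Definition \ref{defn:CT-maps}, together with compactness of $\overline{H}=\Gamma(H,A)\cup\partial H$. Throughout, fix the base points $h_0\in H$ and $g_0\in G$ from that definition. Since $h_0$ can be translated into the identity by right multiplication, we may take $h_0=1$ and $g_0=1$ up to a bounded perturbation. Here we assume, as is the case for all the boundaries under consideration, that sequences at bounded distance from a sequence converging to a boundary point have the same limit. This is the analogue of Lemma \ref{lemma:weakly-visible}.

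For the inclusion $f(\partial H)\subseteq \Lambda_G(H)$, take $\xi\in\partial H$. Since $\Gamma(H,A)$ is dense in $\overline{H}$ and $\overline{H}$ is metrizable, there is a sequence $\{h_i\}\subset H$ with $h_ih_0\to\xi$. By definition, $f(\xi)=\lim_i h_ig_0$. Each $h_ig_0$ lies in the coset $Hg_0$, which is at bounded Hausdorff distance from $H$ in $\Gamma(G,A')$. Hence $h_i$ converges to the same point $f(\xi)$, and therefore $f(\xi)\in\Lambda_G(H)$.

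For the reverse inclusion, take $p\in\Lambda_G(H)$, and choose $\{h_i\}\subset H$ with $h_i\to p$ in $\overline{G}$. Since $p\in\partial G$, the sequence $\{h_i\}$ leaves every finite set of $G$, and hence leaves every finite set of $H$. By compactness and metrizability of $\overline{H}$, a subsequence $h_{i_k}h_0$ converges in $\overline{H}$. The limit cannot be a point of $\Gamma(H,A)$, because the sequence is unbounded and $\Gamma(H,A)$ is open in $\overline{H}$ and proper. So the limit is some $\xi\in\partial H$. Then $f(\xi)=\lim_k h_{i_k}g_0=\lim_k h_{i_k}=p$, and therefore $p\in f(\partial H)$.

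The only delicate point is the hypothesis that bounded perturbation preserves boundary limits in the general compactification framework, which is used in passing between $h_ig_0$ and $h_i$. It is the main obstacle only in the sense of bookkeeping: one can sidestep it by choosing $g_0=1$ and defining limit sets via orbits $Hg_0$. In the HHS, Morse, Gromov and CAT(0) settings it is standard.
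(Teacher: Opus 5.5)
Your proof is correct and follows the same two-inclusion argument as the paper: density of $H$ in $\overline{H}$ together with the defining formula for $f$ gives $f(\partial H)\subseteq\Lambda_G(H)$, and compactness of $\overline{H}$ together with the same formula gives the reverse inclusion. The paper silently takes $h_0=g_0=1$ and never justifies why the subsequential limit lies in $\partial H$; your bounded-perturbation bookkeeping (which you correctly flag as an extra hypothesis, true in the HHS, Gromov and CAT(0) settings) and your properness argument fill exactly those two small gaps.
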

\begin{proof}
    Let $\xi\in\partial H$. Since $H$ is dense in $\overline{H}=H\cup\partial H$, there exists a sequence $\{h_n\}$ in $H$ such that $h_n\to\xi$ as $n\to\infty.$ Then, by the definition of the Cannon--Thurston map, $h_n\to f(\xi)$. This implies that $f(\xi)\in\Lambda_G(H)$, and thus $f(\partial H)\subseteq \Lambda_G(H)$. Conversely, let $\eta\in\Lambda_G(H).$ Then, there exists a sequence $\{h_n\}$ in $H$ such that $h_n\to\eta$ as $n\to\infty.$ Since $\overline{H}$ is compact, there exists subsequence $\{h_{n_k}\}$ of $\{h_n\}$ such that $h_{n_k}\to\xi$ as $k\to\infty$ for some $\xi\in\partial H$. Now, it follows from the definition of Cannon--Thurston map that $f(\xi)=\eta.$ Hence, $\eta\in f(\partial H)$. This completes the proof of the lemma.
\end{proof}
We end this subsection with the following non-existential criterion for CT maps. Before that, we include the following definition for the sake of completeness.

\begin{definition}
     Let $Z$ be a topological space. An element $g\in Homeo(Z)$ acts with North-South dynamics if $g$ fixes two points $z^{+}\neq z^{-}\in Z$ and for any open subsets $U,V$ of $Z$ with $z^{-}\in U$ and $z^{+}\in V$ there exists an integer $N$ so that $g^n(Z\setminus U)\subset V$ and $g^{-n}(Z\setminus V)\subset U$ for all $n\geq N$.
\end{definition}

\begin{lemma}[Criterion for non-existence of CT maps]\label{lemma-nonexistence-ct}
    Let $H<G$ be infinite finitely generated groups that admit compactifications such that the following hold: There exists $h\in H$ such that, 
    \begin{enumerate}
        \item $h$ acts with North-South dynamics on $\partial H$.
        \item $h$ fixes pointwise a subset $L\subset\partial G$ such that $|L|\geq 3$ and $L\subseteq\Lambda_G(H)$, where $\Lambda_G(H)$ denotes the limit set of $H$ in $\partial G.$
    \end{enumerate}
    Then, the inclusion $i:H\hookrightarrow G$ does not extend to an $H$-equivariant continuous map $\partial i:\partial H\to\partial G.$
\end{lemma}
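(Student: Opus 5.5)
We argue by contradiction: suppose the inclusion $i\co H\hookrightarrow G$ extends to an $H$-equivariant continuous map $\partial i\co \partial H\to\partial G$. We will show that $\partial i(\partial H)$ can contain at most two points fixed by $h$, contradicting $|L|\geq 3$.

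\textbf{Step 1: $\partial i$ is onto the limit set.} By the argument of Lemma \ref{lemma:image of CT is the limit set}, $\partial i(\partial H)=\Lambda_G(H)$. This uses that a continuous extension of the inclusion satisfies $\partial i(\lim h_n)=\lim h_n$, together with compactness of $\overline{H}$. Since $L\subseteq\Lambda_G(H)$, each point $\ell\in L$ has the form $\ell=\partial i(\xi_\ell)$ for some $\xi_\ell\in\partial H$.

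\textbf{Step 2: reduce to $\partial H$.} By equivariance, $h$ commutes with $\partial i$. Let $z^{\pm}$ be the fixed points of $h$ on $\partial H$. We claim
\[
\partial i(\partial H\setminus\{z^-\})=\{\partial i(z^+)\}.
\]
Take $\xi\neq z^-$. Choose a neighborhood $U$ of $z^-$ whose closure misses $\xi$; this is possible since $\partial H$ is metrizable and hence Hausdorff and regular. North--South dynamics gives $h^n\xi\to z^+$. Then continuity and equivariance give
\[
h^n\partial i(\xi)=\partial i(h^n\xi)\to\partial i(z^+).
\]
Now take $\xi=\xi_\ell$ with $\xi_\ell\neq z^-$. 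Then $\ell=\partial i(\xi_\ell)$ is $h$-fixed, so $h^n\ell=\ell$ for all $n$. Since $\partial G$ is Hausdorff, limits are unique, and therefore $\ell=\partial i(z^+)$.

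\textbf{Step 3: count.} Every $\ell\in L$ either has a preimage $\xi_\ell\neq z^-$, in which case $\ell=\partial i(z^+)$, or else all its preimages equal $z^-$, in which case $\ell=\partial i(z^-)$. Hence $L\subseteq\{\partial i(z^+),\partial i(z^-)\}$, so $|L|\leq 2$, contradicting $|L|\geq 3$.

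The only delicate point is Step 2: we need the orbit $h^n\xi$ to actually converge to $z^+$, which must be extracted from the open-set formulation of North--South dynamics. For a point $\xi\neq z^-$, pick disjoint neighborhoods $U\ni z^-$ with $\xi\notin U$. For any neighborhood $V$ of $z^+$, the definition gives $h^n(\partial H\setminus U)\subset V$ for all large $n$. Since $\xi\in\partial H\setminus U$, we get $h^n\xi\in V$ eventually, which is exactly convergence to $z^+$.
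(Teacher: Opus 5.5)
Your proof is correct and is essentially the paper's argument. Both use Lemma~\ref{lemma:image of CT is the limit set} to get $\partial i(\partial H)=\Lambda_G(H)$, then combine North--South dynamics with equivariance and continuity to show that every $h$-fixed point of the image other than $\partial i(z^-)$ equals $\partial i(z^+)$. Your version differs only in making the final count and the passage from the open-set definition to pointwise convergence $h^n\xi\to z^+$ explicit.
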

\begin{proof}
   The idea of the proof is from Lemma 5.1 of \cite{BGGGS}. Suppose such a continuous map $\partial i$ exists. Then, by Lemma \ref{lemma:image of CT is the limit set}, $\partial i(\partial H)=\Lambda_G(H).$ Let $h^{\pm}$ be the fixed points of $h$ in $\partial H.$ Let $x=\partial i(y)\in\partial i(\partial H)$ be distinct from $\partial i(h^{\pm})$. Now, $\lim_{n\to\infty}h^nx=\lim_{n\to\infty}h^n\partial i(y)=\partial i(\lim_{n\to\infty}h^ny)=\partial i(h^+).$ Hence, $h$ fixes at most two points of $\Lambda_G(H)$. This is a contradiction as $h$ fixes pointwise $L$.
\end{proof}
\section{Non-Existence of Cannon--Thurston maps for hierarchically hyperbolic groups}
This section is devoted to proving the main results of this paper. We start here with the following, which is proved using the criterion for non-existence of CT maps (Lemma \ref{lemma-nonexistence-ct}).

\begin{theorem}\label{theorem-main}
    Let $(G,\gS)$ be a non-hyperbolic HHG such that the hyperbolic space corresponding to the maximal domain of $\gS$ is unbounded (equivalently, $G$ contains a Morse element, equivalently $G$ is not quasiisometric to a product of unbounded spaces). Let $N$ be an infinite hyperbolic normal subgroup of $G$. Suppose $G$ contains a free abelian subgroup $K$ of rank at least $2$ such that $K\cap N\neq\{1\}.$ Then, the inclusion $N\hookrightarrow G$ does not extend to  a Cannon--Thurston map from the Gromov boundary of $N$ to the HHG boundary of $G$.
\end{theorem}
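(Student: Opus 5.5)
Since $K\cap N\neq\{1\}$, we fix a nontrivial element $h\in K\cap N$ and verify the two hypotheses of Lemma~\ref{lemma-nonexistence-ct} for the pair $(N,G)$. Here $\partial N$ is the Gromov boundary and $\partial G=\partial_{HH}G$. By Lemma~\ref{lemma:limit-set-of-normal-subgroup}(2), $\Lambda_G(N)=\partial_{HH}G$, since $N$ is infinite and normal and $\calC S$ is unbounded for the maximal domain $S$. So any subset $L\subset\partial_{HH}G$ satisfies $L\subseteq\Lambda_G(N)$, and condition (2) reduces to finding at least three points of $\partial_{HH}G$ fixed by $h$.

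\textbf{Condition (1).} $N$ is torsion-free on the relevant element, because $h$ lies in the free abelian group $K$ and so has infinite order. In the hyperbolic group $N$, every infinite-order element is loxodromic, so $h$ acts on the Gromov boundary $\partial N$ with North--South dynamics. This is standard.

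\textbf{Condition (2).} Choose $k\in K$ so that $\langle h,k\rangle\cong\ZZ^2$; this is possible because $K$ has rank at least $2$. Let $A=\langle h,k\rangle$. The key observation is that $h$ commutes with $k^m$ for all $m$ and with every element of $A$. If $\xi\in\partial_{HH}G$ is a limit point of a sequence in $A$, say $a_n\to\xi$, then $h a_n = a_n h$ stays at bounded distance from $a_n$. The action of $h$ on $\overline{G}$ is continuous and extends the left multiplication, so $h\xi=\lim h a_n$. I then need to show $\lim a_n h=\lim a_n$, which is a weak visibility property for HHS boundaries: sequences at bounded distance converge to the same boundary point. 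This follows from the definition of the topology, because the projections $\pi_U$ are uniformly coarsely Lipschitz, so the sequences $\pi_U(a_n)$ and $\pi_U(a_nh)$ are uniformly close in every $\calC U$, and the conditions defining $\mathcal{B}^{int}$ (ratios of distances from $x_0$, membership in $U_S$) are asymptotically insensitive to bounded perturbations. Hence $h$ fixes pointwise the limit set $\Lambda_G(A)$.

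It remains to show that $|\Lambda_G(A)|\geq 3$, and I expect this to be the main obstacle. The plan is to use the sequences $h^n$, $h^{-n}$, $k^n$, $k^{-n}$ and mixed sequences such as $h^nk^n$, and to argue via the HHS rank/flat structure that these produce at least three distinct limits. $A$ is quasi-isometrically embedded, since abelian subgroups of HHGs are undistorted, so its image is a quasiflat of dimension $2$. The main tool I would try first is the structure of $\Lambda(A)$: by \cite{DHSbound}, the limit set of a $\ZZ^2$ (more generally of a quasiflat or a hierarchically quasiconvex flat) contains a join of boundaries. In particular it contains points supported on orthogonal domains $U\perp V$, where the two generators act loxodromically, together with the arcs $a p_U+(1-a)p_V$, which gives infinitely many points.

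Concretely, I would pass to a finite-index subgroup and use the classification of elements of HHGs (Durham--Hagen--Sisto). Each infinite-order element has a nonempty set of domains on which it is loxodromic, and because $A$ is not virtually cyclic it cannot be that all of $A$ acts loxodromically on a single hyperbolic $\calC U$ with the same two fixed points. This yields either two orthogonal domains carrying distinct loxodromic fixed points, or two distinct pairs of fixed points in one $\calC U$, and either case gives at least three fixed boundary points. Fallback: if this case analysis becomes messy, I would use only the sequences $h^{\pm n}$ and $k^{\pm n}$ together with continuity of the action to get three distinct limits. I would distinguish them through their supports and boundary projections, checking in each case that two of them cannot coincide using the definition of boundary points as formal sums. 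With $|L|\geq 3$ in hand, Lemma~\ref{lemma-nonexistence-ct} finishes the proof.
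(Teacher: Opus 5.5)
Your skeleton matches the paper's. You pick $h\in K\cap N$, get North--South dynamics on $\partial N$ from $h$ having infinite order, use Lemma~\ref{lemma:limit-set-of-normal-subgroup}(2) to get $\Lambda_G(N)=\partial_{HH}G$, and reduce to exhibiting three points of $\partial_{HH}G$ fixed by $h$.

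The genuinely different ingredient is how you get pointwise fixing. The paper passes to the hierarchically quasiconvex hull $H_K$ and cites \cite[Proposition 2.17]{HHSequi}, which gives fixing only after taking powers. You instead use four facts: $ha_n=a_nh$; $d(a_n,a_nh)=|h|$; convergence in $\overline{G}$ is stable under uniformly bounded perturbation; and the $G$-action on $\overline{G}$ is continuous. This is correct. The interior-part conditions survive a uniformly bounded perturbation because $d_S(x_0,a_n)\to\infty$ for $S$ in the support of the limit, and the coarse-Lipschitz constant of the projections is uniform over all $T\in\overline{S}^{\perp}$. Your route is more self-contained, needs no powers, and in fact shows that $h$ fixes the limit set of its whole centralizer.

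What is missing is a proof that $|\Lambda_G(A)|\geq 3$, and the case analysis you sketch does not work as written.
\begin{itemize}
\item If commuting $h,k$ both act loxodromically on the same $\calC U$, then $k$ carries the attracting point of $h$ to the attracting point of $khk^{-1}=h$. So they share both fixed points, and your branch with ``two distinct pairs of fixed points in one $\calC U$'' never occurs.
\item ``$A$ is not virtually cyclic'' by itself does not stop all of $A$ from acting loxodromically with common endpoints on one quasi-line; for example, $\ZZ^2$ acts on $\mathbb{R}$ by $(m,n)\mapsto m+n\sqrt{2}$.
\item For the same reason, your fallback could produce only the two points $\lim h^{\pm n}$ when $\mathrm{Big}(h)=\mathrm{Big}(k)$ with proportional translation-length vectors.
\end{itemize}

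Some HHG-specific input is needed. The paper supplies it through the structure of $H_K$: its unbounded domains are quasilines, so $\Lambda(H_K)\cong S^l$, which has more than two points since $K$ has rank at least $2$.

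If you prefer to stay with $\Lambda_G(A)$, the following works. The points $p^{\pm}=\lim h^{\pm n}$ are supported on $\mathrm{Big}(h)$. A finite-index subgroup $A'\le A$ fixes each $U\in\mathrm{Big}(h)$ and the endpoints of $h$ in $\partial\calC U$, which yields a translation homomorphism $A'\to\mathbb{R}^{\mathrm{Big}(h)}$. There are two cases.
\begin{itemize}
\item If this map is not injective with discrete image, there are $g_n\to\infty$ in $A'$ with $\pi_U(g_n)$ bounded for all $U\in\mathrm{Big}(h)$. Any accumulation point of $(g_n)$ lies in $\Lambda_G(A)$, and it is not $p^{\pm}$ by condition (1) of the interior part.
\item Otherwise the image is a rank-$2$ lattice, so some $k\in A'$ has translation vector not parallel to that of $h$. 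Then $\lim k^n\notin\{p^+,p^-\}$, since that limit is supported on $\mathrm{Big}(k)$ with weights proportional to translation lengths.
\end{itemize}
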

\begin{proof}
To prove the theorem, we check the conditions of Lemma \ref{lemma-nonexistence-ct}. Let $H_K$ denote the hierarchically quasiconvex hull of $K$ in $G$ (\cite[Definition 6.1]{BHS2}). Then, by \cite[Proposition 2.17]{HHSequi}, 
up to taking powers, the elements of $K$ fix pointwise the limit set $\Lambda H_K$ of $H_K$ in $\partial_{HH}G$. Let $h\in N\cap K$. Then, $h$ is an infinite order element of $N$, and hence it has North-South dynamics on the Gromov boundary $\partial N$  of $N$. Note that, by Lemma \ref{lemma:limit-set-of-normal-subgroup} (2), $\Lambda (N)=\partial_{HH}G$. 
Assume that there exists a Cannon--Thurston map $f:\partial N \to\partial_{HH}G.$ Then, by Lemma \ref{lemma:image of CT is the limit set}, $f(\partial N)=\Lambda (N)$, and hence $f$ is surjective. Thus, $\Lambda H_K\subset f(\partial N)$ and hence the element $h$ satisfies both the conditions of Lemma \ref{lemma-nonexistence-ct}. Thus, we get a contradiction to our assumption of the existence of $f$ since unbounded domains of $H_K$ are quasilines, and as a result $\Lambda(H_K)$ will be homeomorphic to $S^l$. 
\end{proof}

The proof of the following lemma is included in the proof of  Proposition 5.2 of \cite{BGGGS}.

\begin{lemma}\label{lemma-normal-intersect-freeabelian}
    Let $G$ be a group that contains $\ZZ^2$. Then, for any hyperbolic normal subgroup $N$ of $G$ there exists a group $K$ isomorphic to $\ZZ^2$ such that $N\cap K\neq \{1\}$. 
\end{lemma}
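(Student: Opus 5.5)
Let $A=\langle a,b\rangle\cong\ZZ^2$ be a subgroup of $G$. The plan is to either find a $\ZZ^2$ meeting $N$ nontrivially directly, or to manufacture one from $A$ and an element of $N$ using normality.

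\textbf{Case 1: $A\cap N\neq\{1\}$.} Then $K=A$ works.

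\textbf{Case 2: $A\cap N=\{1\}$.} Here we use the standard facts about hyperbolic groups: every infinite-order element $g\in N$ has virtually cyclic centralizer $C_N(g)$, and $N$ contains no $\ZZ^2$.

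First we produce an infinite-order element of $N$. We assume $N$ is infinite and non-elementary, as in Theorem~\ref{theorem-main}; if $N$ is finite the statement is vacuous or false, so this hypothesis is implicit. Then $N$ contains an infinite-order element $g$.

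Next consider the conjugation action of $A$ on $N$. Since $N$ is normal, $a$ acts on $N$ by an automorphism. The orbit $a^kga^{-k}$ consists of elements of $N$ of the same translation length. A hyperbolic group has only finitely many conjugacy classes of elements of bounded length, but the orbit need not be finite, so this alone does not help.

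Instead use commutators, which is the idea behind \cite[Prop.~5.2]{BGGGS}. Take $n\in N$ of infinite order and consider $[a,n]=ana^{-1}n^{-1}\in N$.
\begin{itemize}
\item Suppose some power $a^k$ commutes with some power $n^m$, with $k,m\neq 0$. Then $\langle a^k,n^m\rangle$ is abelian. Since $A\cap N=\{1\}$, we have $\langle a^k\rangle\cap\langle n^m\rangle=\{1\}$, so $\langle a^k,n^m\rangle\cong\ZZ^2$ and it meets $N$ nontrivially. Take $K=\langle a^k,n^m\rangle$.
\item Otherwise, look at the elements $c_k=a^k n a^{-k}\in N$. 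The pair $(a,b)$ acts on $N$ by commuting automorphisms $\alpha,\beta$. Because $N$ is hyperbolic, $\mathrm{Out}(N)$ has torsion-free-ish dynamics, and commuting automorphisms of a hyperbolic group whose product with $N$ has no $\ZZ^2$ meeting $N$ are constrained.
\end{itemize}

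The cleaner route is the following. Consider $M=NA$. Since $A\cap N=\{1\}$, we have $M=N\rtimes A$, so $M/N\cong\ZZ^2$ and $M$ is an extension $1\to N\to M\to\ZZ^2\to1$. Now $\alpha$ and $\beta$ commute in $\mathrm{Out}(N)$.

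If the image of $A$ in $\mathrm{Out}(N)$ is not injective, some nontrivial $a'\in A$ acts on $N$ as an inner automorphism $c_x$. Then $a'x^{-1}$ centralizes $N$. If $x$ has infinite order, then $\langle a'x^{-1},x\rangle$ is abelian; it is $\ZZ^2$ because $a'\notin N$, and it meets $N$ in $\langle x\rangle$. If $x$ has finite order, replace $a'$ by a power so that $x=1$; then $a'$ centralizes $N$, and $\langle a',g\rangle\cong\ZZ^2$ for $g\in N$ of infinite order.

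If the map $A\to\mathrm{Out}(N)$ is injective, we get $\ZZ^2\le\mathrm{Out}(N)$. This is impossible for one-ended hyperbolic $N$ after passing to finite index, using Paulin/Rips--Sela and JSJ theory: $\mathrm{Out}(N)$ is virtually an extension of a finite group by an abelian group of Dehn twists. For free $N$ the situation is again delicate.

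The main obstacle is this last subcase: showing that $A$ cannot act by "independent" outer automorphisms without producing a $\ZZ^2$ through $N$. What I would try first is to cite the structure of abelian subgroups of $\mathrm{Out}$ of a hyperbolic group. Commuting outer automorphisms share invariant cyclic subgroups, such as Dehn twist edge groups or fixed subgroups. Pick an element $n\in N$ whose conjugacy class is fixed by some nontrivial power $a^k$, meaning $a^kna^{-k}=xnx^{-1}$. Then $x^{-1}a^k$ commutes with $n$, and $\langle x^{-1}a^k,n\rangle\cong\ZZ^2$ meets $N$. The existence of such a periodic conjugacy class is what must be justified, for instance via fixed points of the $A$-action on the boundary or on a JSJ/outer-space tree.
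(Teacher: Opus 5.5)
Your reductions are sound, but the case that carries the whole content of the lemma is left unproved, and the claim you use to dispose of it is false.

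\textbf{What works.} Case 1 is trivial. If some $1\neq a'\in A$ acts on $N$ as $c_x$, then $z=x^{-1}a'$ centralizes $N$. Since $A\cap N=\{1\}$ means $A$ embeds in $G/N$, no nonzero power of $z$ lies in $N$; this, rather than merely $a'\notin N$, is what gives $\langle z\rangle\cap\langle n\rangle=\{1\}$. Hence $\langle z,n\rangle\cong\ZZ^2$ for any infinite-order $n\in N$. You are also right that $N$ must be assumed infinite.

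\textbf{The gap.} For $G=N\rtimes A$ the lemma is \emph{equivalent} to the following: some $1\neq\gamma\in A$ fixes the $N$-conjugacy class of an infinite-order $n\in N$.
One direction: if $\gamma n\gamma^{-1}=xnx^{-1}$, then $\langle x^{-1}\gamma,n\rangle\cong\ZZ^2$.
Conversely, a $K\cong\ZZ^2$ with $K\cap N\neq\{1\}$ cannot lie in the hyperbolic group $N$. So $K$ contains some $x\gamma$ with $1\neq\gamma\in A$, and this element commutes with an infinite-order element of $K\cap N$.
This is exactly the statement you end by saying ``must be justified,'' and nothing in the proposal justifies it. The ``Otherwise'' bullet (``torsion-free-ish dynamics'') is not an argument. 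The remark that commuting outer automorphisms share invariant cyclic subgroups simply asserts the conclusion.

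\textbf{The dismissal of the injective case is false.} You claim $A\to\mathrm{Out}(N)$ cannot be injective when $N$ is one-ended. Take $N=\pi_1S_g$ and disjoint, non-isotopic essential curves $\alpha,\beta$. The twists $T_\alpha,T_\beta$ can be realized by homeomorphisms with disjoint supports avoiding the base point. They therefore give commuting automorphisms generating a $\ZZ^2\le\mathrm{Aut}(N)$ that injects into $\mathrm{Out}(N)$. So the injective case occurs in $N\rtimes\langle T_\alpha,T_\beta\rangle$ with $N$ one-ended. There, a $\ZZ^2$ meeting $N$ is generated by an element representing $\alpha$ and a lift of $T_\alpha$ fixing it, which is precisely the fixed-conjugacy-class phenomenon. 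Even your own description of $\mathrm{Out}(N)$, as built from an abelian group of twists, permits $\ZZ^2$. For one-ended $N$ the correct description is virtually a free abelian twist group extended by mapping class groups of the quadratically hanging pieces.

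\textbf{What is needed.} Closing the gap requires genuine information about rank-two abelian subgroups of $\mathrm{Out}(N)$, for example:
\begin{itemize}
\item reducibility of such subgroups of mapping class groups, for surface $N$;
\item invariance of JSJ edge groups, in the one-ended case;
\item a Feighn--Handel-type analysis, for free or infinitely-ended $N$.
\end{itemize}
Alternatively, some other argument must produce a nontrivial element of $A$ with a periodic conjugacy class. The paper gives no self-contained proof here; it points to the proof of \cite[Proposition~5.2]{BGGGS}. As written, your proposal establishes only the easy cases.
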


We are ready to prove our main Theorem \ref{main-thm-1} which proves Conjecture \ref{conj:no-cannon-thurston} under extra hypotheses.

\begin{mainthm1}
	Let $G$ be a non-hyperbolic HHG that is not quasiisometric to a product of unbounded spaces.
	If $G$ contains a free abelian subgroup of rank at least $2$ then, for an infinite hyperbolic normal subgroup $N$ of $G$, we have the following:
	\begin{enumerate}
	\item There does not exist a Cannon--Thurston map from the Gromov boundary of $N$ to the HHG boundary of $G$.
	\item There does not exist a Cannon--Thurston map from the Gromov boundary of $N$ to the Morse boundary of $G$.
	\end{enumerate}
\end{mainthm1}
\begin{proof}
	(1) follows from Theorem \ref{theorem-main} $+$ Lemma \ref{lemma-normal-intersect-freeabelian}.
	
	(2) By \cite[Corollary 5.10]{ABR-structure-invariant}, we see that the Morse boundary of $G$ is dense in the HHG boundary of $G$. Since, by (1), there does not exist a Cannon--Thurston map from the Gromov boundary (which is the same as the Morse boundary) of $N$ to the HHG boundary of $\Gamma$, we are done.
\end{proof}

Now, we are ready to answer Question \ref{ques:ct-map-free-by-cyclic}.

\begin{corfbc}
	Let $\Gamma=\F\rtimes_{\phi}\ZZ$ be an unbranched free by cyclic group such that $\Gamma$ is not quasiisometric to a product of unbounded spaces. Let $N$ be a hyperbolic normal subgroup of $\Gamma$. If $\phi$ is not atoroidal, then we have the following:
    \begin{enumerate}
        \item The inclusion $N\hookrightarrow \Gamma$ does not extend to a continuous map from the Gromov boundary of $N$ to the HHG boundary of $\Gamma$. 
        \item There does not exist a Cannon--Thurston map from the Morse boundary of $N$ to the Morse boundary of $\Gamma$.
    \end{enumerate}
\end{corfbc}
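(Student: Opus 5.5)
The corollary should follow by reducing it to Theorem \ref{main-thm-1}. Since $\Gamma$ is unbranched, the result of \cite{BGGH} says $\Gamma$ is an HHG. So I need to check the remaining hypotheses of Theorem \ref{main-thm-1}: $\Gamma$ is non-hyperbolic, is not quasiisometric to a product of unbounded spaces, and contains a copy of $\ZZ^2$.

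The second of these is assumed in the statement. For the other two, I use that $\phi$ is not atoroidal. Then some nontrivial $w\in\F$ and some $k\neq 0$ satisfy $\phi^k([w])=[w]$. So $\phi^k(w)=uwu^{-1}$ for some $u\in\F$. The element $t^k$ then acts on $w$ as conjugation by $u$, so $u^{-1}t^k$ commutes with $w$. The subgroup $\la w, u^{-1}t^k\ra$ is free abelian of rank $2$: its image in $\ZZ$ under the projection $\Gamma\to\ZZ$ is $k\ZZ$, and its intersection with $\F$ is the centralizer of $w$ restricted appropriately, which is cyclic and infinite. A $\ZZ^2$ subgroup also shows $\Gamma$ is not hyperbolic, consistent with the equivalence stated in the corollary (Brinkmann's theorem gives the converse).

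I also need $N$ to be infinite. If $N$ is finite and normal, it is trivial, since $\Gamma$ is torsion-free. In that case the statement is vacuous, or trivially true by convention, so I assume $N\neq\{1\}$. Part (1) then follows directly from Theorem \ref{main-thm-1}(1).

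For part (2), I would argue in either of two ways. One option is Theorem \ref{main-thm-1}(2), noting that the Morse boundary of the hyperbolic group $N$ is its Gromov boundary. The other option, which does not depend on the unbranched structure, is the argument from the introduction. All free by cyclic groups are Morse local-to-global by \cite{KudPetyt}. By \cite{CordesRSZ}, Cannon--Thurston maps to the Morse boundary do not exist for normal hyperbolic subgroups of such groups, provided the group is non-hyperbolic and the subgroup is infinite of infinite index. I would use the second route as a cross-check.

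One further point in part (2) is infinite index. A hyperbolic normal subgroup of finite index would make $\Gamma$ hyperbolic, which is false. So $N$ has infinite index.

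The main obstacle is confirming that the HHG structure from \cite{BGGH} has an unbounded maximal coordinate space, since Theorem \ref{theorem-main} needs this form of the hypothesis. The hypothesis "not quasiisometric to a product of unbounded spaces" should supply it through the equivalence quoted in Theorem \ref{theorem-main}, namely the rank-rigidity result of Petyt--Spriano. I would cite that equivalence explicitly rather than reprove it.
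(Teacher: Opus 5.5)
Your proposal is correct and is essentially the paper's proof: non-atoroidality of $\phi$ gives a $\ZZ^2$ subgroup (your explicit $\la w, u^{-1}t^k\ra$ supplies the detail the paper only asserts), and parts (1) and (2) then follow from Theorem~\ref{main-thm-1}(1) and (2), with the Morse local-to-global argument from the introduction as an optional cross-check. One small correction: for $N=\{1\}$ the conclusion is not vacuously true but false, because the empty map on $\partial N=\emptyset$ is a continuous extension, so nontriviality (hence infiniteness) of $N$ is a genuine implicit hypothesis, as in Theorem~\ref{main-thm-1}, rather than a matter of convention.
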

\begin{proof}
	(1) Since $\phi$ is not atoroidal, $\Gamma$ contains a subgroup isomorphic to $\ZZ^2$. Hence, we are done by Theorem \ref{main-thm-1}(1).

    (2) This follows from Theorem \ref{main-thm-1}(2).
\end{proof}

Following direct corollary adds to the discussion regarding normal and right angled Artin subgroups of surface mapping class groups. 
\begin{cormcg}
	Let $S_{g,n}$ denote the connected, orientable surface of genus $g$ and $n\geq 0$ punctures such that $3g-3+n\geq 2$. For any hyperbolic normal subgroup $N$ of Mod$(S_g^n)$, the Cannon--Thurston map does not exist from the Gromov boundary of $N$ to the HHG boundary of Mod$(S_g^n)$.
\end{cormcg}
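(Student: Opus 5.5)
The plan is to deduce Corollary~\ref{cor: mcg} directly from Theorem~\ref{main-thm-1}(1) by checking its hypotheses for $G=\Mod(S_{g,n})$ with $3g-3+n\geq 2$. There are four things to verify: $G$ is an HHG, $G$ is not hyperbolic, $G$ is not quasiisometric to a product of unbounded spaces, and $G$ contains $\mathbb{Z}^2$. Finally, the case where $N$ is finite must be handled separately.

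\emph{HHG structure.} By \cite{BHS1,BHS2}, $\Mod(S_{g,n})$ is an HHG. The index set $\gS$ is the collection of isotopy classes of essential subsurfaces, and the associated hyperbolic spaces are curve graphs. The $\sqsubseteq$-maximal domain is $S_{g,n}$ itself, with coordinate space the curve graph $\calC(S_{g,n})$.

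\emph{Hypotheses of Theorem~\ref{main-thm-1}.}
\begin{enumerate}
\item Since $3g-3+n\geq 2$, the surface carries at least two disjoint, non-isotopic essential simple closed curves $\alpha,\beta$. The Dehn twists $T_\alpha,T_\beta$ commute and generate a copy of $\mathbb{Z}^2$. In particular $G$ is not hyperbolic.
\item By Masur--Minsky, $\calC(S_{g,n})$ is infinite-diameter. Indeed, a pseudo-Anosov element acts loxodromically on it, so the maximal coordinate space is unbounded. By the equivalence recorded in Theorem~\ref{theorem-main}, $G$ is then not quasiisometric to a product of unbounded spaces.
\end{enumerate}
So every hypothesis of Theorem~\ref{main-thm-1} holds whenever $N$ is an infinite hyperbolic normal subgroup, and part (1) gives the non-existence of the Cannon--Thurston map.

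\emph{The case of finite $N$.} If $N$ is finite, its Gromov boundary is empty, so the statement is vacuous. Alternatively, for surfaces where the center is trivial, a finite normal subgroup must be trivial. Either way the corollary concerns infinite $N$ only, and Theorem~\ref{main-thm-1} applies to that case.

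\emph{Main point to be careful about.} The only place requiring any care is the unboundedness of the maximal coordinate space, since it is what makes Lemma~\ref{lemma:limit-set-of-normal-subgroup} usable inside the proof of Theorem~\ref{theorem-main}. For all surfaces with $3g-3+n\geq 2$ this is the classical infinite diameter of the curve graph, so I expect no real obstacle.
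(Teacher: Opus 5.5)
Your proposal is correct and is the paper's argument: the corollary is read off from Theorem~\ref{main-thm-1}(1), with the $\mathbb{Z}^2$ supplied by Dehn twists about disjoint non-isotopic curves. You additionally make explicit the hypotheses the paper leaves implicit, namely the HHG structure and the unboundedness of the curve graph of $S$, which is what Theorem~\ref{theorem-main} actually uses. One small correction: for finite $N$ the claim is not ``vacuous'' but false, since with $\partial N=\emptyset$ the empty map trivially qualifies (take $N=\{1\}$), so the corollary must be read with $N$ infinite as in Theorem~\ref{main-thm-1} --- which is the reading you end up adopting anyway.
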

\begin{proof}
	Since Mod$(S_g^n)$ contains $\ZZ^2$ subgroups (subgroups generated by Dehn twists around disjoint simple closed curves), the corollary follows from Theorem \ref{main-thm-1}.
\end{proof}

Let $H=H_0\ast \ZZ^2$, where $H_0$ is a hyperbolic group, and consider an automorphism $\phi:H\to H$ where $\phi|_{H_0}:H_0\to H_0$ a hyperbolic automorphism, and $\phi|_{\ZZ^2}$ the identity. Let $G=H\rtimes_{\phi}\ZZ$. In \cite[Corollary 3.7]{charney-sisto-ct-map-morse}, the authors showed that there does not exist a Cannon--Thurston map from the Morse boundary of $H$ to the Morse boundary of $G$. Note that $H$ is not a hyperbolic group. We show below that their conclusion holds for any hyperbolic normal subgroup of $G$ as well.
\begin{corollary} Let $G=H\rtimes_{\phi}\ZZ$ and $H$ be as above. 
 We have the following:
	\begin{enumerate}
		\item Every Morse element of $H$ is also a Morse element of $G$.
		\item Let $N$ be a normal hyperbolic subgroup of $G$. Then, there does not exist a Cannon-Thurston map from the Morse boundary of $N$ to the Morse boundary of $G$.
		\item For any hyperbolic normal subgroup $N$ of $G$ there exists an element of $N$ which is not Morse in $G$.
	\end{enumerate}
\end{corollary}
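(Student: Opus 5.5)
For (1), I will use the Morse-element characterization of Morse geodesics. Let $h\in H$ be Morse in $H$. Then $h$ is infinite order, and up to conjugacy in $H=H_0\ast\ZZ^2$ it is not conjugate into the $\ZZ^2$ factor. The plan has three steps.

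First, I will put a geometric model on $G$. Write $G$ as the graph of groups with vertex group $G_0=H_0\rtimes_{\phi}\ZZ$ and a vertex group $\ZZ^2\times\ZZ=\ZZ^3$, amalgamated along the common cyclic subgroup $\langle t\rangle$. This is because $\phi$ fixes $\ZZ^2$ pointwise, so $t$ commutes with $\ZZ^2$. Concretely, $G=G_0\ast_{\langle t\rangle}\ZZ^3$. $G_0$ is hyperbolic by Bestvina--Feighn/Brinkmann, since $\phi|_{H_0}$ is hyperbolic. Moreover $\langle t\rangle$ is quasiconvex in $G_0$.

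Second, I will show that $G$ is hyperbolic relative to $\ZZ^3$. The Bass--Serre tree makes $G$ look like a tree of a hyperbolic group and flats glued along lines, so that $G$ is hyperbolic relative to $\{\ZZ^3\}$, using Dahmani's combination theorem for amalgams over a maximal parabolic cyclic subgroup. Then, in a relatively hyperbolic group, an infinite-order element not conjugate into a peripheral subgroup is Morse. Elements of $H$ that are Morse in $H$ act loxodromically on the Bass--Serre tree of $H_0\ast\ZZ^2$, or are Morse in $H_0$. In either case they are not conjugate into $\ZZ^3$ in $G$. The only elements of $G$ conjugate into $\ZZ^3$ that lie in $H$ are conjugates of elements of $\ZZ^2$, which are not Morse in $H$. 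This gives (1).

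For (3), let $N$ be normal hyperbolic and nontrivial. I will find an element of $N$ lying in a $\ZZ^2$, which is then not Morse in $G$. Lemma~\ref{lemma-normal-intersect-freeabelian} applies, since $G\supset\ZZ^3\supset\ZZ^2$. It gives $K\cong\ZZ^2$ with $1\neq k\in N\cap K$. An element of a $\ZZ^2$ subgroup cannot be Morse, because Morse elements have virtually cyclic centralizers. This proves (3).

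For (2), I will apply Lemma~\ref{lemma-nonexistence-ct}, or argue as in Theorem~\ref{main-thm-1}(2). The natural route is to transfer through an HHG structure: $G$ is relatively hyperbolic with $\ZZ^3$ peripherals, hence an HHG by \cite{BHS2}, with unbounded maximal coordinate space (the coned-off graph). It is not hyperbolic and contains $\ZZ^2$. So Theorem~\ref{main-thm-1}(2) applies directly to any infinite hyperbolic normal $N$. If $N$ is finite, the Morse boundary of $N$ is empty and the statement is vacuous or excluded. I expect the main obstacle to be rigorously justifying relative hyperbolicity of $G$, that is, checking that $\langle t\rangle$ is a maximal cyclic, malnormal-enough subgroup of $G_0$ so that Dahmani's combination applies. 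I would try first to verify malnormality of $\langle t\rangle$ in $H_0\rtimes\ZZ$ using that $\phi|_{H_0}$ has no periodic conjugacy classes.
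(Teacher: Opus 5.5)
Your proposal is correct. Part (2) is the paper's argument: $G$ is hyperbolic relative to $\ZZ^3$, hence an HHG whose top-level coned-off graph is unbounded, and it contains $\ZZ^2$, so Theorem~\ref{main-thm-1}(2) applies.

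The paper only says ``by writing $G$ as an amalgamated free product''; your splitting supplies the detail it omits. Write $G=(H_0\rtimes_\phi\ZZ)\ast_{\langle t\rangle}\ZZ^3$. The identity $ht^mh^{-1}=h\phi^m(h)^{-1}t^m$ shows that $\langle t\rangle$ is malnormal in $H_0\rtimes_\phi\ZZ$ as soon as no nontrivial $h\in H_0$ is fixed by a nonzero power of $\phi$. This holds when $H_0$ is torsion-free, since such an $h$ would give a $\ZZ^2$ in a hyperbolic group. A torsion element fixed by some $\phi^m$ would break it; the paper silently ignores that edge case too.

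You differ from the paper in (1) and (3).

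\textbf{Part (1).} The paper just defers to the proof of \cite[Corollary 3.7(1)]{charney-sisto-ct-map-morse}. You derive (1) from the relative hyperbolicity you need anyway for (2): infinite-order elements not conjugate into a peripheral subgroup are Morse. This makes (1) self-contained at no extra cost. Spell out why a Morse element $h$ of $H$ is not $G$-conjugate into $\ZZ^3$. Suppose $ghg^{-1}=zt^m$ with $z\in\ZZ^2$. Projecting to $G/H\cong\ZZ$ gives $m=0$. Writing $g=h't^k$, you get $\phi^k(h)=h'^{-1}zh'$. Since $\phi$ is the identity on $\ZZ^2$, $h$ is then $H$-conjugate into $\ZZ^2$, so it is not Morse in $H$.

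\textbf{Part (3).} The paper argues via boundary dynamics. Morse elements act with North--South dynamics on $\partial_{HH}G$, whereas a nontrivial element of $N\cap A$ fixes pointwise the limit set of the hierarchically quasiconvex hull of $A$. You instead use the purely group-theoretic fact that a Morse element $g$ has virtually cyclic centralizer. That fact needs a justification. One option is Tran's theorem that Morse (strongly quasiconvex) subgroups have finite height. Infinitely many distinct cosets $x\langle g\rangle$ with $x\in C_G(g)$ would all satisfy $x\langle g\rangle x^{-1}=\langle g\rangle$, contradicting finite height, so $[C_G(g):\langle g\rangle]<\infty$.

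Your route to (3) is more elementary and does not depend on the HHG boundary. The paper's route is shorter given the machinery it has already set up.

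One small point: Lemma~\ref{lemma-normal-intersect-freeabelian} really needs $N$ infinite. For finite $N$, however, part (3) is trivial, since torsion elements are never Morse.
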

\begin{proof}
The proof of (1) is the same as the proof of \cite[Corollary 3.7(1)]{charney-sisto-ct-map-morse}. For the proof of (2), let $N$ be a normal subgroup of $G$. By writing $G$ as an amalgamated free product, we see that $G$ is hyperbolic relative to $\ZZ^3$. Thus, $G$ is an HHG by \cite[Theorem 9.1]{BHS2}. In the HHG structure of $G$, the top-level hyperbolic space, i.e., the coned-off Cayley graph of $G$ with respect to the cosets of $\ZZ^3$ is unbounded, and hence $G$ is not quasiisometric to a product of unbounded spaces.  Then, by Theorem \ref{main-thm-1}(2), we are done. (3) is immediate as Morse element of $G$ has North-South dynamics for the HHG boundary of $G$ (given any hyperbolic normal subgroup $N$ of $G$, by Lemma \ref{lemma-normal-intersect-freeabelian}, there will be a free abelian subgroup $A$ of rank $2$ which intersects $N$. Thus, in the intersection, we have an infinite order element which is not Morse in $G$ as it fixes pointwise the boundary of the hierarchically quasi convex hull of $A$).
\end{proof}

\begin{remark}
 All the results mentioned above do not depend on the chosen HHG structures.
	
\end{remark}

\section{Relatively hierarchical boundaries of CAT(0) groups with isolated flats and Cannon--Thurston maps}

In this section, we will prove the following theorem:
\begin{mainthm2}
    Let $G$ be a group acting geometrically on a CAT(0) space $X$ with isolated flats. Then, the identity map of $X$ extends to a $G$-equivariant homeomorphism from the CAT(0) boundary of $X$ to the relatively hierarchically hyperbolic boundary of $X$.
\end{mainthm2}

Let $G$ and $X$ be as in Theorem \ref{main-thm-2}. 

\subsection{Bowditch boundary vs CAT(0) boundary of $X$}  Let $\calF$ be the collection of flats in $X$. Note that, by Theorem \ref{theorem-cat(0)-isolated-flat}, the CAT(0) boundary $\partial X$ of $X$ is an invariant of $G$. Since the flats are isometrically embedded copies of Euclidean spaces, we have a $G$-invariant collection of spheres in the CAT(0) boundary of $X$. Also, $G$ is hyperbolic relative to a collection $\calH$ of virtually free abelian groups of rank at least $2$ (Theorem \ref{theorem-cat(0)-isolated-flat}). Let $\Gamma$ be a Cayley graph of $G$, and let $\Phi:\Gamma\to X$ be an orbit map, which is a quasiisometry by the Milnor--Svarc lemma (\cite[Proposition 8.19, Chapter I.8]{BH}). For each $g\in G$ and $H\in \calH$, we denote by $Y_{gH}$ the image of $gH$ under the map $\Phi$. Under the action of $G$, there are finitely many orbits of $\calF$, and up to conjugacy in $G$, each $H\in \calH$ is the $G$-stabilizer of some $F\in\calF$ (\cite[Lemma 3.1.2]{hruska-kleiner-isolated-flat}). If $H$ is the $G$-stabilizer of $F\in\calF$, then the Hausdorff distance between $Y_{gH}$ and $F$ is uniformly bounded. Let $\hat{\Gamma}$ be the coned-off Cayley graph of $G$ with respect to $\calH$, and let $\hat{X}$ be the coned-off space of $X$ with respect to $\calF$. Then, by \cite[Proposition 3.2]{sisto-projection}, $\Phi$ induces a quasiisometry $\hat{\Phi}:\hat{\Gamma}\to \hat{X}$. The points in the boundaries of flats are referred to as the {\em peripheral limit points.} Two peripheral limit points are said to be of {\em same type} if they lie in the boundary of some $F\in\calF.$

Note that $\hat{\Gamma}$ is a fine hyperbolic graph \cite{Bow-97}. Define $\Delta_{\infty}(\hat{\Gamma}):=V_{\infty}(\hat{\Gamma})\cup \partial \hat{\Gamma}$, where $V_{\infty}(\hat{\Gamma})$ denotes the set of infinite valence vertices of $\hat{\Gamma}$ and $\partial\hat{\Gamma}$ denotes the Gromov boundary of $\hat{\Gamma}$. Then, with respect to a topology as defined in \cite{Bow-97} (see also \cite[Section 3]{tran-boundary-relation}), $\Delta_{\infty}(\hat{\Gamma})$ is a second countable compact Hausdorff space. Moreover, as shown in \cite{Bow-97}, $\Delta_{\infty}(\hat{\Gamma})$ is homeomorphic to the \emph{Bowditch boundary} of $G$ with respect to $\calH$. In this work we will not define the Bowditch boundary explicitly, as we will not need it, but use this correspondence and the homeomorphism between the Bowditch  and CAT(0) boundaries given in Theorem \ref{theorem:relation-bowditch-cat(0)-boundary} below instead.

Before describing the homeomorphism between two boundaries, we need the following definition and the subsequent Lemma \ref{lemma:existence-of-nice-pairs}.
\begin{definition}\cite[Definition 5.7]{tran-boundary-relation}
    For $\epsilon\geq 0$ and $A\geq 0$, a pair of paths $(c,\hat{c})$ in $\hat{\Gamma}$ is said be an $(\epsilon,A)-${\em nice pair} if $c$ is an $\epsilon$-quasigeodesic ray in $\Gamma$ and $\hat{c}$ is a geodesic ray in $\hat{\Gamma}$, and each $G$-vertex of $\hat{c}$ lies in a $A$-neighborhood of $c$ with respect to the metric of $\Gamma$. For $r\geq 0$ and a geodesic ray $\alpha$ in $X$, a pair of paths $(c,\hat{c})$ is said to be an $(\epsilon,A,r)-${\em nice pair of geodesic ray $\alpha$} if $(c,\hat{c})$ is an $(\epsilon,A)-$nice pair and the Hausdorff distance between $\alpha$ and $\Phi(c)$ is at most $r$. A pair of path $(c,\hat{c})$ is said to be {\em nice pair of the geodesic ray $\alpha$} if $(c,\hat{c})$ is an $(\epsilon,A,r)-$nice pair of $\alpha$ for some $\epsilon\geq 0,A\geq 0,$ and $r\geq 0$.
\end{definition}
The following lemma shows the existence of a nice pair of a geodesic ray in $X$.
\begin{lemma}\label{lemma:existence-of-nice-pairs}
    \cite[Lemma 5.9]{tran-boundary-relation} There are positive constants $\epsilon,A,r$ such that the following holds. Let $\alpha$ be a geodesic ray in $X$ such that $\alpha \not\subset N_K(gH)$ for any $gH$ and any $K\geq 0$ (equivalently $[\alpha]\notin \partial F$ for any $F\in\calF$) there is an
$(\epsilon,A,r)-$nice pair $(c,\hat{c})$ of $\alpha$. Moreover, if $\alpha$ is a geodesic ray with initial point $\Phi(g)$, where $g\in G$, then $c$ and $\hat{c}$ could be chosen with initial point $g$.
\end{lemma}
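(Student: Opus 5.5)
We need to prove Lemma \ref{lemma:existence-of-nice-pairs}, cited from Tran. The plan is to build the nice pair directly from $\alpha$ by transferring it to $\Gamma$ through the quasi-inverse of $\Phi$ and then exploiting relative hyperbolicity.

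\textbf{Step 1: the path $c$.} Let $\alpha$ be a geodesic ray in $X$ with $[\alpha]\notin\partial F$ for every $F\in\calF$. Since $\Phi$ is a quasiisometry by Milnor--\v{S}varc, I choose $g_n\in G$ with $d(\Phi(g_n),\alpha(n))$ uniformly bounded, and join consecutive $g_n$ by geodesic segments in $\Gamma$. This gives a path $c$ in $\Gamma$. Because $\Phi$ is a quasiisometry and $\alpha$ is geodesic, $c$ is an $\epsilon$-quasigeodesic ray with $Hd(\Phi(c),\alpha)\le r$, where $\epsilon$ and $r$ depend only on the quasiisometry constants. If $\alpha(0)=\Phi(g)$, I take $g_0=g$.

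\textbf{Step 2: the geodesic $\hat c$.} Next I pass to the coned-off graph $\hat\Gamma$. Since $\alpha$ does not stay in a bounded neighborhood of a single flat, and flats correspond to the cosets $gH$ up to bounded Hausdorff distance, $c$ is not eventually contained in any $N_K(gH)$. The key fact I will use is that in a relatively hyperbolic group, the vertices of a $\hat\Gamma$-geodesic joining two points of $\Gamma$ lie uniformly close, in the $\Gamma$-metric, to any $\Gamma$-quasigeodesic with the same endpoints. This is the bounded coset penetration property or, equivalently, the lifting-of-quasigeodesics lemma of Dru\c{t}u--Sapir/Osin.

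So for each $n$ I take a $\hat\Gamma$-geodesic $\hat c_n$ from $g_0$ to $g_n$. Its $G$-vertices lie in the $A$-neighborhood of $c|_{[0,n]}$, with $A$ uniform. Because $\hat\Gamma$ is fine, hence locally finite away from cone vertices, and the $G$-vertices of $\hat c_n$ are trapped near the fixed path $c$, a diagonal Arzel\`a--Ascoli argument extracts a subsequence converging on compact sets to a ray $\hat c$. Each initial segment of $\hat c$ is a geodesic, and the $A$-closeness of $G$-vertices passes to the limit.

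\textbf{Step 3: infinite length of $\hat c$ (main obstacle).} The delicate point is showing that $\hat c$ has infinite length in $\hat\Gamma$, i.e.\ that $d_{\hat\Gamma}(g_0,g_n)\to\infty$. If this failed, then $\hat c_n$ would pass through a bounded set of cone points, and by fineness and bounded coset penetration, $c$ would eventually lie in a bounded neighborhood of a single coset $gH$. That contradicts the hypothesis that $[\alpha]\notin\partial F$.

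I would handle this via the standard fact that a $\Gamma$-quasigeodesic ray with bounded $\hat\Gamma$-projection eventually tracks one peripheral coset. This uses Theorem \ref{theorem-cat(0)-isolated-flat} together with the isolated flats axioms to convert "close to $gH$" into "close to $F$". Combining Steps 1--3 gives the $(\epsilon,A,r)$-nice pair, with the basepoint statement following from the choice $g_0=g$.
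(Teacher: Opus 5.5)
The paper does not prove this lemma; it quotes it from Tran [Lemma 5.9]. Your outline follows the standard route: pull $\alpha$ back through a quasi-inverse of $\Phi$ to get $c$, take $\hat\Gamma$-geodesics $\hat c_n$ from $g_0$ to $g_n$ whose $G$-vertices stay uniformly $\Gamma$-close to $c$ (Dru\c{t}u--Sapir/Hruska), pass to a limit, and use the non-peripheral hypothesis to make $\hat\Gamma$-distances grow. It can be made to work, but the dependency between Steps 2 and 3 is wrong as written, and Step 3 needs one more ingredient.

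\emph{Compactness in Step 2.} Fineness and ``trapped near $c$'' do not bound the $k$-th $G$-vertex of $\hat c_n$ independently of $n$. You only know it is $A$-close to some $c(s)$, with $s$ uncontrolled. Whenever $c$ runs for a long time near a coset $gH$, $d_{\hat\Gamma}(g_0,\cdot)$ stays bounded along that stretch, so a vertex at bounded position on $\hat c_n$ can sit near $c(s)$ with $s$ huge. If $d_{\hat\Gamma}(g_0,g_n)$ stayed bounded, the endpoints $g_n$ would sit at bounded position and escape, so there would be no limit ray at all. The problem is therefore not merely that the limit might have finite length. What the diagonal argument actually needs is that $t\mapsto d_{\hat\Gamma}(g_0,c(t))$ is proper. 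Then the $k$-th $G$-vertex lies in the finite set $G\cap N_A(c[0,S_k])$, and only finitely many cone vertices are adjacent to two given $G$-vertices. The Cayley-graph part is locally finite anyway, so fineness plays no role. That properness is exactly the content of your Step 3, so Step 3 must be proved first: it is an input to the existence of $\hat c$, not just to its infinite length.

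\emph{The subsequence issue in Step 3.} Negating $d_{\hat\Gamma}(g_0,g_n)\to\infty$ only gives $d_{\hat\Gamma}(g_0,g_{n_j})\le R$ along a subsequence. The fact you want to quote, that a quasigeodesic ray with bounded $\hat\Gamma$-image eventually tracks one coset, needs the whole $\hat\Gamma$-image of $c$ bounded. You can bridge this by citing that $\Gamma$-quasigeodesics are unparametrized quasigeodesics in $\hat\Gamma$, or directly from your Step 2 fact:
\begin{enumerate}
\item Let the $G$-vertices of $\hat c_{n_j}$ be $x_0=g_0,\dots,x_m=g_{n_j}$, with $m\le R$ and $x_i$ $A$-close to $c(s_i)$, where $s_0=0$ and $s_m=n_j$.
\item If $x_i,x_{i+1}$ are $\Gamma$-adjacent, then $c$ between $s_i$ and $s_{i+1}$ is uniformly short.
\item Otherwise $x_i,x_{i+1}$ lie in a common coset $yH$, and by strong quasiconvexity of peripheral cosets, $c$ between $s_i$ and $s_{i+1}$ lies in $N_K(yH)$.
\item Consecutive parameter intervals share endpoints and chain from $0$ to $n_j$, so they cover $[0,n_j]$.
\end{enumerate}
This gives $d_{\hat\Gamma}(g_0,c(t))\le R+C$ for all $t\le n_j$, hence for all $t$, with $C$ uniform. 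With these two repairs, and a citation for the coset-tracking fact, the argument is complete.
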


Now, we are ready to describe the homeomorphism $f\co\partial X\to \Delta_{\infty}(\hat{\Gamma})$   from \cite[Section 6]{tran-boundary-relation}. Let $\alpha$ be a geodesic ray in $X$. If $\alpha\subset N_K(Y_{gH})$ for some $K\geq 0$ define $f([\alpha])$ to be the cone point corresponding to $gH$. Otherwise, by Lemma \ref{lemma:existence-of-nice-pairs}, there is a nice pair $(c,\hat{c})$ of the geodesic ray $\alpha$, and $f([\alpha])$ is defined as $[\hat{c}]$. Thus we have,  

\begin{theorem}\cite[Theorem 1.1]{tran-boundary-relation}\label{theorem:relation-bowditch-cat(0)-boundary}
    The Bowditch boundary of $(G,\calH)$  is $G$-equivariantly homeomorphic to the space obtained from the CAT(0) boundary $\partial X$ of $X$ by identifying the peripheral limit points of the same type.
\end{theorem}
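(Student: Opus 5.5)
The plan is to show that the map $f\co\partial X\to\Delta_{\infty}(\hat{\Gamma})$ described just before the statement is a well-defined, $G$-equivariant, continuous surjection whose fibres are exactly the classes of peripheral limit points of the same type. The quotient of the compact space $\partial X$ by this relation then maps continuously and bijectively onto the Hausdorff space $\Delta_{\infty}(\hat{\Gamma})$, so the induced map is a homeomorphism. Combined with Bowditch's identification of $\Delta_{\infty}(\hat{\Gamma})$ with the Bowditch boundary, this gives the statement.

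\textbf{Well-definedness and equivariance.} If $\alpha\subset N_K(Y_{gH})$, then $\alpha$ stays close to the flat $F$ stabilised by $gHg^{-1}$. Isolated flats (condition (2) of Definition \ref{defn;cat(0)-groups-with-isolated-flats}) forces this $gH$ to be unique. Hence the cone point is determined, and asymptotic rays give the same coset. If $\alpha$ is not peripheral, I take two nice pairs $(c,\hat c)$ and $(c',\hat c')$ for asymptotic representatives, using Lemma \ref{lemma:existence-of-nice-pairs} with uniform constants. Then $\Phi(c)$ and $\Phi(c')$ lie within bounded Hausdorff distance, so the $G$-vertices of $\hat c$ and $\hat c'$ stay uniformly close in $\Gamma$, hence in $\hat{\Gamma}$. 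Therefore $[\hat c]=[\hat c']$ in $\partial\hat{\Gamma}$. Equivariance is immediate because $\Phi$, the coning and the nice pairs are all $G$-equivariant.

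\textbf{Fibres and surjectivity.} Each cone point $gH$ is hit, since $\partial F$ is a nonempty sphere. Its fibre is exactly $\partial F$, because non-peripheral rays go to $\partial\hat{\Gamma}$. For $\eta\in\partial\hat{\Gamma}$, I take a geodesic ray $\hat c$ from $1$ to $\eta$ and de-electrify it to a quasigeodesic $c$ in $\Gamma$. I then consider CAT(0) geodesics from $x_0$ to $\Phi(c(t_n))$ and pass to a limit ray $\alpha$ by Arzel\`a--Ascoli. Relative thinness (quasigeodesics in $\Gamma$ track $\hat c$ away from peripheral regions) shows that $\alpha$ is not peripheral and that $(c,\hat c)$ is a nice pair for it, so $f([\alpha])=\eta$.

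For injectivity on non-peripheral points, suppose $f([\alpha])=f([\beta])$, with rays based at $x_0$. Then $\hat c_\alpha$ and $\hat c_\beta$ are asymptotic geodesics in $\hat{\Gamma}$, so they share infinitely many uniformly close $G$-vertices at distance tending to infinity. Hence $\alpha$ and $\beta$ pass within bounded distance of a sequence of points $p_n\to\infty$. Convexity of the CAT(0) metric then gives $d(\alpha(t),\beta(t))$ bounded, so $[\alpha]=[\beta]$.

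\textbf{Continuity, the main obstacle.} Since $\partial X$ is metrizable, I argue sequentially with Lemma \ref{lemma-subseq-convergence} and the nested neighbourhoods $U(c,n,1/n)$ of Lemma \ref{lemma-cat(0)-nbhd}. If $\xi_n\to\xi$, the representing rays from $x_0$ agree up to error $1/n$ on longer and longer initial segments. At a non-peripheral $\xi$, the associated nice pairs then have $\hat c$-rays sharing long initial segments in $\hat{\Gamma}$. For $\xi_n$ peripheral, the cone points of $\xi_n$ lie far out along $\hat c_\xi$. In both cases the images converge in the Bowditch topology.

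The hard case is $\xi\in\partial F$ with $f(\xi)$ the cone point $v$. Here convergence in $\Delta_{\infty}(\hat{\Gamma})$ means that geodesics from $v$ to $f(\xi_n)$ eventually leave $v$ through edges outside any given finite set, which is where fineness of $\hat{\Gamma}$ is needed. I would first try the following: rays close to a ray in $F$ for a long time travel a long way inside $N_r(F)$ before exiting. By isolated flats, their exit points from $N_r(F)$ leave every bounded set, so their hat-geodesics leave $v$ through edges corresponding to far-away points of $gH$, i.e.\ eventually outside any finite edge set.
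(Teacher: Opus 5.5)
The paper gives no proof of this statement. It is quoted from \cite[Theorem 1.1]{tran-boundary-relation}, and the paper only records the map $f$. The pieces it imports elsewhere show that Tran's proof has your architecture: well-defined equivariant $f$, bijective off peripheral points, continuous, then compact-to-Hausdorff. Those pieces are Lemma~\ref{lemma:existence-of-nice-pairs} and \cite[Remark 5.8, Lemmas 5.10 and 5.11]{tran-boundary-relation}, used for the bijectivity of $\psi$. Your skeleton, the peripheral fibres and the final quotient argument are fine. The trouble is that every step which transfers information between the coned-off metric of $\hat\Gamma$ and the metric of $X$ is either only asserted or argued incorrectly. Those transfers are the real content of the theorem.

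\textbf{Injectivity.} Asymptotic rays $\hat c_\alpha,\hat c_\beta$ are uniformly close only in $\hat\Gamma$. Two $G$-vertices of one coset $gH$ are at $\hat\Gamma$-distance at most $2$ but can be arbitrarily far apart in $\Gamma$. So ``hence $\alpha$ and $\beta$ pass within bounded distance of points $p_n\to\infty$'' does not follow. You need bounded coset penetration, relatively thin triangles, or fineness to produce pairs of $G$-vertices on the two rays that are close in $\Gamma$; this is what \cite[Lemma 5.11]{tran-boundary-relation} supplies. Your well-definedness step has the same conflation, but harmlessly. There you only need $\hat\Gamma$-closeness, obtained from the fact that $c'$ lies in a bounded $\hat\Gamma$-neighbourhood of $\hat c'$, i.e.\ \cite[Remark 5.8]{tran-boundary-relation}.

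\textbf{Surjectivity.} $(c,\hat c)$ is in general not a nice pair of the limit ray. Inside a flat, a de-electrified $\Gamma$-geodesic can be an $L$-shaped word path such as $a^nb^n$. This happens, for instance, for $\mathbb{Z}^2\ast\mathbb{Z}$ acting on the universal cover of a flat torus wedge a circle. Such a path is at Hausdorff distance comparable to $n$ from the CAT(0) segment. So if $\hat c$ penetrates infinitely many cosets ever more deeply, $\Phi(c)$ need not be at finite Hausdorff distance from $\alpha$. What you should show instead is that the nice pair of $\alpha$ given by Lemma~\ref{lemma:existence-of-nice-pairs} is $\hat\Gamma$-asymptotic to $\hat c$. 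This uses the relative Morse property to see that $\alpha$ passes near the $G$-vertices of $\hat c$, and is \cite[Lemma 5.10]{tran-boundary-relation}.

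\textbf{Continuity at a cone point $v$.} Basic neighbourhoods of $v$ in $\Delta_\infty(\hat\Gamma)$ constrain the first edges of \emph{all} geodesics from $v$ to $f(\xi_n)$. You only discuss the nice-pair geodesic issued from the base point. You do not show that it passes through $v$, nor that other geodesics from $v$ to the same target leave $v$ through nearby edges. That requires fineness in the form ``neighbours of $v$ joined by a short path avoiding $v$ are uniformly close in $\Gamma$,'' combined with the fact that exit points from $N_r(F)$ escape. As written, this case is only a plan.
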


\subsection{Relatively hierarchically hyperbolic boundary of $X$} By the relatively hierarchically hyperbolic structure given in Remark \ref{example-relhhg}, $(X,\gS)$ is a relatively hierarchically hyperbolic space. Now, we define the boundary of $(X,\gS)$, and it will be referred to as {\em relatively hierarchically hyperbolic boundary} of $X$, denoted by $\partial_{RHH}X$. 

In the definition of the hierarchically hyperbolic boundary of an HHS, we use the Gromov boundary of the domain spaces. To define the \emph{relatively} hierarchically hyperbolic boundary of $X$, we use the CAT(0) boundary of $F\in\calF$ and the Gromov boundary of $\hat{X}$. We use geodesic rays in $F$ and uniform quasigeodesic rays in $\hat{X}$ to define the boundary projections (Definition \ref{defn:boudnary-projection}). The boundary points in $\partial_{RHH}X$ and the topology on $\overline{X}_{RHH}:=X\cup\partial_{RHH}X$ are defined similarly as for the hierarchically hyperbolic boundary of an HHS. Since, in $\gS$, the orthogonality relation is empty, as a set, $\partial_{RHH}X$ has the following form:
$$\partial_{RHH}X:=(\bigsqcup_{F\in\calF}\partial F)\sqcup \partial\hat{X},$$ where $\partial F$ and $\partial\hat{X}$ denote the CAT(0) and Gromov boundaries of $F$ and $\hat{X}$, respectively.

\begin{lemma}\label{lemma-rel-bdry-haus}
    We have the following.

    \begin{enumerate}
        \item $\overline{X}_{RHH}$ is Hausdorff.
        \item For each $p\in\partial_{RHH}X$, the set $B_{{U_S},\epsilon}(p)$ forms a neighborhood basis for the topology on $\overline{X}_{RHH}$ at $p$.
    \end{enumerate}
\end{lemma}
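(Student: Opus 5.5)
The plan is to follow the proof of \cite[Theorem 3.4]{DHSbound}, checking that the two places where Gromov hyperbolicity of all domain spaces is used can be replaced by the structure of flats. In this setting the orthogonality relation is empty, so every support set is a singleton: either $\{F\}$ for some $F\in\calF$ or $\{\hat{X}\}$. Hence each boundary point has weight $a^p_S=1$ on a single domain. The ratio conditions in the remote and interior parts are therefore vacuous or trivial, and the condition $\sum_{T\in\overline{S}^\perp}a^q_T<\epsilon$ is automatic. So $B_{\{U_S\},\epsilon}(p)$ reduces to an explicit set. For $p\in\partial F$ it consists of $\{q\in\partial F: q\in U_F\}$, together with the remote points $q$ (in $\partial F'$ for $F'\neq F$, or in $\partial\hat{X}$) whose boundary projection to $F$ lands in $U_F$, together with the points $x\in X$ with $\pi_F(x)\in U_F$. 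The description for $p\in\partial\hat{X}$ is analogous, with $U_{\hat{X}}$ a neighbourhood in $\hat{X}\cup\partial\hat{X}$.

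For (2), I would verify the neighbourhood-basis axioms. First, each basic set contains $p$. Second, the intersection of two basic sets around $p$ contains a third one, which is immediate since we can intersect the $U_S$. Third, if $q\in B_{\{U_S\},\epsilon}(p)$, then some basic set around $q$ is contained in $B_{\{U_S\},\epsilon}(p)$. The third property is where work is needed. When $q$ is remote with respect to $\{F\}$, one chooses $U_q$ small enough that, for every point $z$ of $B(q)$, the projection $\pi_F(z)$ (respectively the boundary projection) stays within the fixed bounded set $\rho_F^{\operatorname{Supp}(q)}$ coarsely. This uses the bounded geodesic image axiom in $\hat{X}$ and the bounded overlap of distinct flats (Definition~\ref{defn;cat(0)-groups-with-isolated-flats}(2)). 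For $S=\hat{X}$, this step uses the hyperbolicity of $\hat{X}$ exactly as in \cite{DHSbound}.

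For (1), take distinct points and find disjoint basic neighbourhoods, splitting into cases according to the supports.
\begin{enumerate}
\item Two points of $X$: use metric balls.
\item A point of $X$ and a boundary point: choose $U_S$ far from $\pi_S$ of a ball, using coarse Lipschitzness of $\pi_S$.
\item Two points of the same $\partial F$: separate them by disjoint cones $U(c,r,\epsilon)$ in $\overline{F}$. Remote points projecting into both cones are impossible, because the boundary projection is a single coarse point and $r$ can be made large.
\item Two points of the same $\partial\hat{X}$: use disjoint Gromov neighbourhoods, as in \cite{DHSbound}.
\item $p\in\partial F$ and $q\in\partial F'$ with $F\neq F'$: choose $U_F$ avoiding a large ball around $\rho_F^{F'}$ and symmetrically $U_{F'}$. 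A point in both neighbourhoods would have to be far in $F$ from $\rho_F^{F'}$ and far in $F'$ from $\rho_{F'}^F$, contradicting the consistency axiom for $F\pitchfork F'$.
\item $p\in\partial F$ and $q\in\partial\hat{X}$: take $U_{\hat{X}}$ disjoint from a neighbourhood of the cone point of $F$. Then consistency for $F\sqsubseteq\hat{X}$ forces points of that neighbourhood to have $\pi_F$ coarsely equal to $\rho_F^{\hat{X}}$ of a bounded region, so $U_F$ can avoid that region.
\end{enumerate}

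I expect the main obstacle to be the mixed cases in (1), together with the remote-point step in (2). These are where the CAT(0) boundary of a flat, rather than a Gromov boundary, enters. The key fact to use is that $\rho$-projections are uniformly bounded, so neighbourhoods of the form $U(c,r,\epsilon)$ with large $r$ avoid them.
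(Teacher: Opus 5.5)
Your reduction is correct. Since $\perp$ is empty, every support set is a singleton and every boundary point has weight $1$. So $B_{\{U_S\},\epsilon}(p)$ is just the set of points whose interior, non-remote or remote projection to the single support domain lies in $U_S$. Your case analysis for (1) is also essentially sound, with one fix in case (6). There, consistency for $F\sqsubseteq\hat{X}$ gives nothing, because $\rho^{\hat{X}}_F(x)=\pi_F(x)$ by definition for $x\in X$. What you need is the bounded geodesic image axiom in $\hat{X}$, with $U_{\hat{X}}$ chosen as a small shadow of $q$.

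The genuine gap is in (2). The third property you propose to verify says that every $q\in B_{\{U_S\},\epsilon}(p)$ has a basic neighbourhood contained in $B_{\{U_S\},\epsilon}(p)$, i.e.\ that basic sets are open. This is false in general. Take $p\in\partial\hat{X}$ and $U_{\hat{X}}=W\cup\{v_F\}$, where $W$ is a small open shadow of $p$ lying far in $\hat{X}$ from the cone point $v_F$, so that $W\cap F=\emptyset$. If you insist on open sets, replace $\{v_F\}$ by a tiny ball. Every $q\in\partial F$ is remote with boundary projection $\rho^F_{\hat{X}}=v_F\in U_{\hat{X}}$, so $q\in B(p)$. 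But every basic neighbourhood of $q$ contains points $x\in F$ with $x=\pi_F(x)\in V_F$, and these are not in $U_{\hat{X}}$.

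Your own argument shows why no choice made at $q$ can help. Consistency and bounded geodesic image only place $\pi_F(z)$ within a uniform constant $C$ of $\rho^{F'}_F$ (resp.\ $\partial\pi_F(q)$). Knowing that this set lies in $U_F$ says nothing about its $C$-neighbourhood.

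The axiom you should verify is the neighbourhood-system one: for each $B_{\{U_S\},\epsilon}(p)$ there is a smaller $B_{\{U'_S\},\epsilon'}(p)$ every point of which has a basic neighbourhood inside $B_{\{U_S\},\epsilon}(p)$. This requires shrinking \emph{at $p$}: find a neighbourhood $U'_S$ of $p_S$ with $N_C(U'_S\cap\calC S)\subseteq U_S$.
\begin{itemize}
\item For $\hat{X}$ this is the usual Gromov-product estimate.
\item For a flat, it is exactly what the paper extracts from Lemma~\ref{lemma-cat(0)-nbhd} combined with Lemma~\ref{lemma:weakly-visible}. If every $K_n=U(c,n,1/n)$ failed, you would get $x_n\in K_n$ with $x_n\to p$ and $y_n\notin U_F$ with $d(x_n,y_n)\le C$. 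This contradicts $y_n\to p$.
\end{itemize}
This is the second part of \cite[Lemma 1.2]{hagen-metrizable}, which the paper says must be redone for the non-hyperbolic coordinates. Your observation that cones with large $r$ avoid bounded $\rho$-sets is useful for (1), but it does not substitute for this step. Once (2) is proved in this weaker form, basic sets are still neighbourhoods, so your separation argument for (1) goes through.
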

\begin{proof}
    The proof of (1) is similar to the proof of \cite[Lemma 2.16]{DHSbound}. For HHSs, (2) is proved in \cite[Proposition 1.5]{hagen-metrizable} relying on the proof of Lemma 1.2 of the same work. In our setup, the proof of the first part of Lemma 1.2 is similar. However, for each $F\in \calF$, the corresponding domain space is $F$ itself, which is not Gromov hyperbolic. Thus, in our setting, the second part of Lemma 1.2 requires a proof. However, using Lemma \ref{lemma-cat(0)-nbhd} and following the proof of \cite[Lemma 1.2]{hagen-metrizable}, the second part of Lemma 1.2 in our setup goes through as well.
\end{proof}

For a closed convex subset $F$ of a proper CAT(0) space $X$, there is a well-defined projection map $\pi_F:X\to F$ that sends each $x\in X$ to the closest point in $F$ (\cite[Chapter II.2]{BH}).

\begin{lemma}\label{lemma-BGI}
    \cite[Lemma 1.15]{sisto-projection} There exist constants $L\geq 0$ and $R\geq 0$ such that if, for $x,y\in X$, $d(\pi_F(x),\pi_F(y))\geq L$ then any geodesic joining $x$ and $y$ intersects $B_R(\pi_F(x))$ and $B_R(\pi_F(y))$.
\end{lemma}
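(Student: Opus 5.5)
The statement to be proved is Lemma \ref{lemma-BGI}: a bounded-geodesic-image property for closest point projections onto flats $F\in\calF$ in a CAT(0) space $X$ with isolated flats. My plan is to derive it from two facts. First, every $F\in\calF$ is uniformly \emph{strongly contracting}. Second, CAT(0) closest point projections onto convex sets are $1$-Lipschitz.

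\textbf{Step 1: uniform contraction.} I will show there is a constant $D$ such that every ball $B$ in $X$ with $d(B,F)\ge \mathrm{radius}(B)$ satisfies $\diam\,\pi_F(B)\le D$. I would argue by contradiction. Suppose there are balls $B_n=B_{r_n}(z_n)$ with $d(B_n,F_n)\geq r_n$ but $\diam\,\pi_{F_n}(B_n)\to\infty$. Choose $x_n,y_n$ in $B_n$ whose projections are far apart. Consider the quadrilateral with vertices $x_n, \pi(x_n), \pi(y_n), y_n$. Convexity of $F$ and the angle condition at projection points (angles $\ge \pi/2$ at $\pi(x_n)$ and $\pi(y_n)$) force the geodesic $[x_n,y_n]$ to travel close to $F$ for a long stretch, which is incompatible with both endpoints lying in a ball far from $F$. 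The clean way to finish is to rescale and pass to an asymptotic cone. Since $X$ is hyperbolic relative to $\calF$ (Theorem \ref{theorem-cat(0)-isolated-flat}), asymptotic cones are tree-graded with pieces the ultralimits of flats. In a tree-graded space the projection to a piece sends every set disjoint from the piece to a single point, which gives the contradiction. Uniformity over $F$ follows from the finitely many $G$-orbits of flats. If I prefer to avoid asymptotic cones, the same conclusion is a known consequence of Sisto's projection results for relatively hyperbolic groups, which I can invoke directly.

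\textbf{Step 2: from contraction to the geodesic statement.} Let $\gamma=[x,y]$ with $d(\pi_F(x),\pi_F(y))\ge L$. Walk along $\gamma$ from $x$. Cover the portion of $\gamma$ outside $N_{R'}(F)$ by consecutive balls of radius comparable to the current distance to $F$. By Step 1 each ball has projection of diameter at most $D$. Since the distance to $F$ changes by at most $1$ per unit length, the standard contracting-geodesic argument applies: if $\gamma$ stayed outside $N_{R'}(F)$ entirely, the projection would move by at most a bounded amount $C(D)$. Choosing $L>C$ forces $\gamma$ to enter $N_{R'}(F)$.

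Let $p$ be the first point of $\gamma$ in $N_{R'}(F)$. The same argument on the subsegment $[x,p]$ gives $d(\pi_F(x),\pi_F(p))\le C$. Also $d(p,\pi_F(p))\le R'$. Hence $d(p,\pi_F(x))\le R'+C$, so $\gamma$ meets $B_R(\pi_F(x))$ with $R=R'+C$. Running the argument symmetrically from $y$ gives the statement for $\pi_F(y)$.

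\textbf{Main obstacle.} I expect the main obstacle to be Step 1: obtaining contraction \emph{uniformly} over all $F\in\calF$, including the quantitative control of balls that are far from $F$. Step 2 is routine once contraction is available.
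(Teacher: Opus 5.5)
There is a genuine gap in Step 1, which, as you suspected, carries all the content of the lemma. The paper itself gives no argument; it simply quotes Sisto's Lemma 1.15.

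\textbf{The argument for Step 1 does not produce a uniform constant.} The angle/convexity heuristic gives nothing on its own. The same convexity and right-angle conditions hold for a line in $\mathbb{E}^2$, where a ball of radius $r$ at distance $r$ from the line projects onto a segment of length $2r$. So everything rests on the asymptotic cone, and there a contradiction is visible only when $D_n=\diam\pi_{F_n}(B_n)$ is comparable to $r_n$.

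You know $D_n\le 2r_n$, since the projection is $1$-Lipschitz. But suppose $D_n/r_n\to 0$ along the ultrafilter. Rescaling by $D_n$ with basepoints on $F_n$ sends every point of $B_n$ off to infinity, since they sit at rescaled distance at least $r_n/D_n$. Rescaling by $r_n$, or by anything of larger order than $D_n$, collapses $\pi_{F_n}(B_n)$ to a point. Tree-gradedness therefore only yields $\diam\pi_F(B)=o(r)$. That is a sublinear contraction statement, not a uniform $D$.

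Reaching the regime $D_n\ll r_n$ needs an extra quantitative device. For instance, you could slide $x_n,y_n$ down $[x_n,\pi_{F_n}x_n]$ and $[y_n,\pi_{F_n}y_n]$ to height comparable to $d(\pi_{F_n}x_n,\pi_{F_n}y_n)$ and use convexity of the CAT(0) metric. Even that only controls pairs that are well separated from $F$ relative to their mutual distance, which roughly means your half-radius balls. A minor point: in a tree-graded space it is \emph{path-connected} sets missing a piece that project to a single point.

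\textbf{This also affects Step 2.} Your balls satisfy $d(B,F)\ge\mathrm{radius}(B)$, so their radius is about $\tfrac12 d(\text{centre},F)$. Chaining such balls along a geodesic that climbs away from $F$ only bounds the projection by roughly $D\log\bigl(d(y,F)/R'\bigr)$, because the balls shrink geometrically toward the low end. The fact that $d(\cdot,F)$ is $1$-Lipschitz does not repair this.

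The standard bounded-geodesic-image argument works differently. It uses balls of radius $d(x,F)$ and $d(y,F)$ about the endpoints, the $1$-Lipschitz property of $\pi_F$, and the comparison $d(x,y)\le d(x,F)+d(\pi_Fx,\pi_Fy)+d(y,F)$. With radius $d(x,F)$ this bookkeeping forces the geodesic to within $3D$ of $F$. With radius $\tfrac12 d(x,F)$ the same bookkeeping closes up without any contradiction. So you need contraction in the stronger, full-radius form, or some additional argument.

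\textbf{On the fallback.} Invoking Sisto's projection results is legitimate, but it amounts to citing the paper the lemma is quoted from. At that point you may as well cite Lemma 1.15 there directly, which is all the paper does, and then Step 2 is superfluous.
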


\begin{lemma}\label{lemma-main-technical-lemma}
Let $F$ be a flat in $X$ and let $\{\xi_n\}\subset \partial X\setminus\{\Lambda (F)\}$ be a sequence converging to a point $\xi\in \Lambda (F)$. Let $\alpha_n$ be the geodesic ray that starts from $x_0$ and represents $\xi_n$. There exists a subsequence $\{n_k\}$ of $\{n\}$ such that $t_{n_k}\to\infty$ as $k\to \infty$ and the sequence $\{\pi_F(\alpha_{n_k}(t_{n_k}))\}$ converges to $\xi$. 
\end{lemma}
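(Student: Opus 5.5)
The statement leaves $t_n$ implicit. I will read it as follows: for each $n$, $t_n$ is the last time the ray $\alpha_n$ stays close to $F$. The goal is then that these times go to infinity along a subsequence, and that the projections $\pi_F(\alpha_n(t_n))$ converge to $\xi$ in $F\cup\partial F$.

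\textbf{Setup.} Since $\xi_n\notin\Lambda(F)=\partial F$, the ray $\alpha_n$ is not contained in any neighborhood of $F$. Indeed, if $\alpha_n\subset N_K(F)$ for some $K$, then $[\alpha_n]\in\partial F$. Since $\pi_F$ is $1$-Lipschitz and $d(\alpha_n(t),F)\to\infty$, the projection $\pi_F(\alpha_n)$ is bounded in $F$. I then set
\[
t_n:=\sup\{t\geq 0: d(\alpha_n(t),F)\leq R'\},
\]
where $R'$ is a constant depending on $R$ and $d(x_0,F)$, with $L,R$ from Lemma \ref{lemma-BGI}. The projection of the tail $\alpha_n([t_n,\infty))$ onto $F$ has diameter at most roughly $L+2R$. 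To see this, take two far-apart projections on the tail; Lemma \ref{lemma-BGI} forces the subgeodesic between them to meet $B_R$ of a point of $F$ after time $t_n$, contradicting the choice of $t_n$. So $p_n:=\pi_F(\alpha_n(t_n))$ is coarsely the point where $\alpha_n$ leaves $F$.

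\textbf{Step 1: $t_n\to\infty$.} I argue by contradiction. Suppose that along some subsequence $t_n\leq T$. Let $c$ be the ray from $x_0$ representing $\xi$; it is asymptotic to $F$, so $d(c(s),F)\leq d(x_0,F)$ for all $s$. Convergence $\xi_n\to\xi$ in the cone topology means $d(\alpha_n(s),c(s))\to 0$ for each fixed $s$. Pick $s$ much larger than $T$ plus all the constants. Then $\alpha_n(s)$ is within distance about $d(x_0,F)$ of $F$ for $n$ large. If $R'$ is chosen at least $d(x_0,F)+1$, this contradicts the definition of $t_n$. Hence $t_n\to\infty$ for the whole sequence, and in particular for any subsequence.

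\textbf{Step 2: convergence of $p_n$ to $\xi$.} This is the main obstacle. By compactness of $F\cup\partial F$, and by Lemma \ref{lemma-subseq-convergence}, it suffices to show that every subsequential limit of $p_n$ equals $\xi$. Here I also use Step 1, which implies $d(\alpha_n(t_n),x_0)\to\infty$ and hence $p_n\to\infty$ in $F$. I will use the convexity of $d(\cdot,F)$ along the geodesic $\alpha_n|_{[0,t_n]}$. Both endpoints of this segment are within $R'$ of $F$, so the whole segment lies in $N_{R'}(F)$. Consequently $\alpha_n|_{[0,t_n]}$ fellow-travels the geodesic $[\pi_F(x_0),p_n]$ inside the flat.

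For a fixed radius $r$, the point $\alpha_n(r)$ converges to $c(r)$. Since $t_n\to\infty$, for large $n$ the point $\alpha_n(r)$ lies on the segment $\alpha_n|_{[0,t_n]}$, which is uniformly close to the Euclidean segment from $\pi_F(x_0)$ to $p_n$. By comparison, the direction of $p_n$ seen from $x_0$ at radius $r$ is then within a bounded error of $c(r)$. Dividing by $r$ and letting $r\to\infty$ gives the cone-topology condition $U(c,r,\epsilon)$ for $p_n$, once I use Lemma \ref{lemma:weakly-visible} to pass between $\alpha_n(t_n)$ and $p_n$. Their distance is at most $R'$, which is bounded. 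So $p_n\to\xi$.

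The delicate point is turning the bounded fellow-traveling error into the small $\epsilon$ required in the neighborhood $U(c,r,\epsilon)$. I would handle this by using CAT(0) comparison. The geodesics from $x_0$ to $\alpha_n(t_n)$ and to $p_n$ have endpoints a bounded distance apart, so at a fixed radius $r$ they differ by at most $R'r/t_n\to 0$.
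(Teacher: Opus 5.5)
Your proposal is correct and follows essentially the paper's route. You take $t_n$ to be the exit time of $\alpha_n$ from a neighborhood of $F$; the paper uses the right endpoint $v_n$ of a maximal subinterval in $N_R(F)$. You then show $t_n\to\infty$, conclude via Lemma~\ref{lemma:weakly-visible} from the bounded distance between $\alpha_n(t_n)$ and $\pi_F(\alpha_n(t_n))$, and use Lemma~\ref{lemma-BGI} for the coarse constancy of the projection of the tail, which is the property the paper invokes later.

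Your use of convexity of $d(\cdot,F)$ with the enlarged radius $R'$ gives $t_n\to\infty$ for the whole sequence, which is cleaner than the paper's subsequence argument. Two small corrections. First, boundedness of $\pi_F(\alpha_n)$ does not follow from $\pi_F$ being $1$-Lipschitz; it follows from your tail estimate. Second, the ``divide by $r$'' remark in Step~2 does not give the cone-topology condition, but your final CAT(0) comparison argument replaces it correctly.
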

\begin{proof}
    Let $\alpha$ be the geodesic ray in $X$ such that $\alpha(0)=x_0$ and $\alpha(\infty)=\xi$. Since $\xi\in\Lambda (F)$, there exists $S\geq 0$ such that $\alpha\subset N_S(F).$ As $\xi_n\to \xi$, for all $r\geq 0$ there exists $N(r)\in\mathbb{N}$ such that, for $n\geq N(r)$, $d(\alpha_n(t),\alpha(t))\leq 1$ for all $0\leq t\leq r$. This implies that, for all $r\geq 0$, there exists $N(r)\in\mathbb N$ such that $d(\alpha_n(t),\pi_F(\alpha_n(t)))\leq S+1$ for $0\leq t\leq r$ and $n\geq N(r)$. We choose $r$  sufficiently large compared to $S$ and $L$ so that $\diam_X(\pi_F(\alpha_n))\geq L$ for all $n\geq N(r)$ where $L$ is the constant from Lemma \ref{lemma-BGI}.  
    In fact, by the same argument, we see that $\{\diam_X(\pi_F(\alpha_n))\}_n$ is unbounded. Thus, by Lemma \ref{lemma-BGI} and the convexity of $F$, there exists a subsequence $\{n_k\}$ of $\{n\}$ such that $I_{n_k}=[u_{n_k},v_{n_k}]$ is a maximal connected subinterval of $[0,\infty)$ satisfying $\alpha_{n_k}([u_{n_k},v_{n_k}])\subset N_R(F)$. Moreover, $d(\pi_F(t),\pi_F(u_{n_k}))\leq L$ whenever $t\leq u_{n_k}$ and  $d(\pi_F(t),\pi_F(v_{n_k}))\leq L$ whenever $t\geq v_{n_k}$. Note that, if $v_{n_k}=\infty$, then $\alpha_{n_k}([u_{n_k},\infty))\subset N_R(F)$, and this implies that $\xi_{n_k}\in\Lambda (F)$, a contradiction to our hypothesis that $\xi_n\subset \partial X\setminus\{\Lambda (F)\}$ for all $n$. Thus, for all $n$, $v_n<\infty$. Since $\{\diam_X(\pi_F(\alpha_n))\}_n$ is unbounded, the subintervals $I_{n_k}$ are unbounded. Now, choose $t_{n_k}=v_{n_k}$.  Since $\pi_F$ moves every point on $\alpha_{n_k}(I_{n_k})$ a distance of at most $R$, $\{\pi_F(\alpha_{n_k}(t_{n_k}))\}$ is an unbounded sequence in $F$. As $d(\alpha_{n_k}(t_{n_k}),\pi_F(t_{n_k}))\leq R$, $\{\pi_F(\alpha_{n_k}(t_{n_k}))\}$ converges to $\xi$ by Lemma \ref{lemma:weakly-visible}.
\end{proof}
\begin{proof}[Proof of Theorem \ref{main-thm-2}]

We are now ready to define a map $\psi:X\cup\partial X\to X\cup\partial_{RHH}X$ as follows. The restriction of $\psi$ to $X$ is the identity map of $X$ to itself. Fix a base point $x_0\in X$. From now on, all geodesic rays in $X$ start at $x_0$. Let $\alpha$ be a geodesic ray in $X$. Since $X$ has isolated flat property with respect to $\calF$, either there exists a unique $F\in\calF$ such that $\alpha\subset N_K(F)$ (this is same as saying that there exists $K'\geq 0$ such that $\alpha\subset N_{K'}(Y_{gH})$ for a unique coset of some $H\in\calH$) for some $K\geq 0$ or, by Lemma \ref{lemma:existence-of-nice-pairs}, there exists a nice pair $(c,\hat{c})$ of the geodesic ray $\alpha$. If $\alpha\subset N_K(F)$, then $[\alpha]$ represents a limit point of $F$, i.e. $[\alpha]\in\Lambda  (F)$. Define $\psi([\alpha])=[\alpha_F]$, where $\alpha_F$ is the geodesic ray in $F$ with base point $\pi_F(x_0)$ and $\alpha_F(\infty)=\alpha(\infty)$. Otherwise, define $\psi([\alpha])=[\hat{\Phi}(\hat{c})]$, where $(c,\hat{c})$ is a nice pair of $\alpha$  and $\hat{\Phi}$ is the induced quasiisometry from $\hat{\Gamma}$ to $\hat{X}$. It is clear that $\psi$ is $G$-equivariant.

Now we will prove that $\psi$ is well-defined. Let $\alpha_1$ and $\alpha_2$ are two geodesic rays such that $[\alpha_1]=[\alpha_2]$. Suppose $\alpha_1\subset N_K(F)$ for some $K\geq0$ and $F\in\calF$. Then, $\alpha_2\subset N_{K'}(F)$ for some $K'\geq 0$. Thus, $\alpha_1$ and $\alpha_2$ represent the same point of $\Lambda(F)$, and therefore $\psi([\alpha_1])=\psi([\alpha_2])$. Suppose $[\alpha_1]\notin\Lambda (F)$ for any $F\in\calF$. Then, the same is true for $[\alpha_2]$. Let $(c_1,\hat{c_1})$ and $(c_2,\hat{c_2})$ be nice pairs of the rays $\alpha_1$ and $\alpha_2$, respectively. By definition of a nice pair, Hausdorff distances $Hd_X(\Phi(c_1),\alpha_1)$ and $Hd_X(\Phi(c_2),\alpha_2)$ are finite. Since $[\alpha_1]=[\alpha_2]$, we see that $Hd_X(\Phi(c_1),\Phi(c_2))$ is finite. This implies that $Hd_{\Gamma}(c_1,c_2)$ is finite, and hence $Hd_{\hat{\Gamma}}(c_1,c_2)$ is finite. By \cite[Remark 5.8]{tran-boundary-relation}, $Hd_{\hat{\Gamma}}(c_1,\hat{c_1})<\infty$ and $Hd_{\hat{\Gamma}}(c_2,\hat{c_2})<\infty$. Thus, $\hat{c_1}$ and $\hat{c_2}$ represent the same point of $\partial \hat{\Gamma}$. Hence, $\hat{\Phi}(\hat{c_1})$ and $\hat{\Phi}(\hat{c_2})$ represent the same point of $\hat{X}$. Thus, $\psi$ is well defined.

Lemma \ref{theorem-RHHG-boundary-sameas-CAT(0)-bdry} below finishes the proof of the theorem. 
\end{proof}
\begin{lemma}\label{theorem-RHHG-boundary-sameas-CAT(0)-bdry}
   The map $\psi$ is a homeomorphism.
\end{lemma}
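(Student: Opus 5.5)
The plan is to use the standard compact-to-Hausdorff argument. The space $X\cup\partial X$ is compact, and $\overline{X}_{RHH}$ is Hausdorff by Lemma \ref{lemma-rel-bdry-haus}(1). So it suffices to show that $\psi$ is a bijection and that $\psi$ is continuous.

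First, bijectivity. On $X$, $\psi$ is the identity. The peripheral limit set $\bigsqcup_F \Lambda(F)$ is sent to $\bigsqcup_F\partial F$. For each $F$, $\Lambda(F)\to\partial F$ is a bijection: a CAT(0) ray in $X$ staying near $F$ is asymptotic to a unique ray in $F$ based at $\pi_F(x_0)$, and rays in $F$ are rays in $X$. Distinct flats have disjoint limit sets by isolation (Definition \ref{defn;cat(0)-groups-with-isolated-flats}(2)). The non-peripheral points are handled through Theorem \ref{theorem:relation-bowditch-cat(0)-boundary}. The map $f$ there is a bijection from non-peripheral points of $\partial X$ onto $\partial\hat\Gamma$. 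Composing with $\partial\hat\Phi$, which is a homeomorphism since $\hat\Phi$ is a quasiisometry, gives exactly $\psi$ on these points. Hence $\psi$ restricts to a bijection onto $\partial\hat X$.

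Next, continuity. Because $X$ is open in both compactifications, continuity at interior points is trivial. At a boundary point I use sequences: both spaces are first countable (Lemma \ref{lemma-cat(0)-nbhd} and Lemma \ref{lemma-rel-bdry-haus}(2)), and Lemma \ref{lemma-subseq-convergence} lets me pass to subsequences freely. Let $z_n\to\xi$ in $X\cup\partial X$. After passing to subsequences and replacing boundary points by far-out points on their rays (using Lemma \ref{lemma:weakly-visible}), I split into the following cases.

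\textbf{Case (a): $\xi$ non-peripheral.} Here I use the Bowditch/Tran topology. The homeomorphism $f$ of Theorem \ref{theorem:relation-bowditch-cat(0)-boundary} is defined through nice pairs, and its continuity, together with $\hat\Phi$ being a quasiisometry, shows that the images of $z_n$ in $\hat X$ converge to $\psi(\xi)$ in $\hat X\cup\partial\hat X$. This covers both interior points and the remote points lying in some $\partial F$, since the latter project to cone points. For the remote part of a neighbourhood of a point supported on $\hat X$, the boundary projection of $q\in\partial F$ is the cone point $\rho^{F}_{\hat X}$. For the interior part, only condition (1), $\pi_{\hat X}(x)\in U_{\hat X}$, is relevant, because the orthogonality relation is empty.

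\textbf{Case (b): $\xi\in\Lambda(F)$.} Now the neighbourhood $B_{\{U_F\},\epsilon}(\psi(\xi))$ requires the following:
\begin{itemize}
\item for interior points, $\pi_F(z_n)\in U_F$;
\item for boundary points of other flats or of $\hat X$ (all remote, since $\overline S^\perp=\emptyset$), that $\partial\pi_F(q)\in U_F$;
\item for points in $\partial F$ itself, convergence in $\partial F$.
\end{itemize}
The last is immediate from the CAT(0) topology. For the first two, Lemma \ref{lemma-main-technical-lemma} is exactly the tool. It gives times $t_{n_k}\to\infty$ with $\pi_F(\alpha_{n_k}(t_{n_k}))\to\xi$. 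By the bounded geodesic image property, Lemma \ref{lemma-BGI}, $\pi_F$ is coarsely constant on $\alpha_{n_k}$ after $t_{n_k}$. Hence $\partial\pi_F(\xi_{n_k})$ is within bounded distance of $\pi_F(\alpha_{n_k}(t_{n_k}))$ and also converges to $\xi$ in $F\cup\partial F$. For interior points $z_n$ I would run the same argument with the geodesic segments $[x_0,z_n]$ in place of rays.

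For the inverse direction, the same bounded-geodesic-image argument works in reverse, but the compact-Hausdorff argument makes it unnecessary.

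The main obstacle is case (b). One has to check that the boundary projection $\partial\pi_F(q)$ of a remote point, defined through quasigeodesics in $\hat X$ (or via $\rho^{F'}_F$ when $q\in\partial F'$), agrees coarsely with $\pi_F$ of the tail of the CAT(0) ray $\alpha_n$. I would handle this by noting that the nice pair $c$ fellow-travels $\alpha_n$ in $X$, so the relevant $\rho^{\hat X}_F$ is computed by $\pi_F$ along $\alpha_n$. When $q\in\partial F'$, the ray $\alpha_n$ ends in $N_K(F')$, so $\pi_F$ of its tail is coarsely $\pi_F(F')=\rho^{F'}_F$.
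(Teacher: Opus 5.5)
Apart from one step, your plan is the paper's proof. The shared pieces are the compact-to-Hausdorff reduction, bijectivity from Tran's correspondence, and continuity split by whether the limit point is peripheral: Tran's homeomorphism $f$ handles the non-peripheral case, and Lemma~\ref{lemma-main-technical-lemma} with bounded geodesic image and consistency handles the peripheral case. Your direct use of Theorem~\ref{theorem:relation-bowditch-cat(0)-boundary} for bijectivity is a fine substitute for the paper's citation of Tran's Lemmas 5.10--5.11. Since $f$ sends peripheral classes to cone points, it carries the non-peripheral points bijectively onto $\partial\hat\Gamma$.

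The step that fails is in your case (a), where you say that continuity of $f$ also covers interior points. It does not. The map $f$ is defined only on $\partial X$, so its continuity says nothing about a sequence $x_n\in X$ converging to a non-peripheral $\xi$. Your preliminary replacement of boundary points by far-out points on their rays goes in the wrong direction and does not help. The interior part of a basic neighbourhood of $\psi(\xi)\in\partial\hat X$ is $\{x\in X:\pi_{\hat X}(x)\in U_{\hat X}\}$. So you must show $x_n\to\psi(\xi)$ in $\hat X\cup\partial\hat X$, which means turning fellow-travelling in $X$ into fellow-travelling in $\hat X$. The paper does this separately, in its Case 3, in three steps. First, the $X$-geodesics $[x_0,x_n]$ fellow-travel $\alpha$ on longer and longer initial segments. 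Second, by relative hyperbolicity \cite[Theorem 1.12(3)]{drutu-sapir-tree-graded}, the $X$-geodesic and the $\hat X$-geodesic from $x_0$ to $x_n$ are uniformly Hausdorff-close in $\hat X$. Third, $\alpha$ is at finite $\hat X$-Hausdorff distance from $\hat\Phi(\hat c)$. Hence $[x_0,x_n]_{\hat X}$ fellow-travels $\hat\Phi(\hat c)$ for longer and longer, which is the required convergence. Your plan needs this argument.

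There is also a smaller issue in case (b). For $q\in\partial F'$ you argue that $\alpha_n$ ends in $N_K(F')$, so $\pi_F$ of its tail is coarsely $\pi_F(F')=\rho^{F'}_F$. Since $\pi_F$ is $1$-Lipschitz, this only gives a bound of $K$. But $K$ depends on $n$, because $F'=F'_n$ varies. The bounded-perturbation step in $F\cup\partial F$ needs a constant independent of $n$. The paper gets one from the consistency axiom: once $\pi_{F'}(\alpha_n(t))$ is more than $E$ from $\rho^F_{F'}$, one has $d_F(\alpha_n(t),\rho^{F'}_F)\le E$. Alternatively, apply Lemma~\ref{lemma-BGI} with respect to $F'$ to $x_0$ and $\alpha_n(t)$ for large $t$. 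The function $t\mapsto d(\alpha_n(t),F')$ is convex and bounded, hence non-increasing, so the tail of $\alpha_n$ lies in $N_R(F')$ with $R$ uniform.
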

\begin{proof}
Since $X\cup\partial X$ is compact and, by Lemma \ref{lemma-rel-bdry-haus}(1), $\overline{X}_{RHH}$ is Hausdorff, to prove that $\psi$ is a homeomorphism, it is sufficient to prove that $\psi$ is bijective and continuous.

\noindent {\em Bijectivity of  $\psi$:} The restriction of $\psi$ to \[\{[\alpha]:\alpha \text{ is a geodesic ray representing a point of } \Lambda(F) \text{ for some } F\in \calF \}\]
is clearly a bijection onto $\bigsqcup_{F\in\calF}\partial F$. So let $\hat{\beta}$ be a quasigeodesic ray in $\hat{X}$. Then, $\hat{\Phi}^{-1}(\hat{\beta})$ is a quasigeodesic ray in $\hat{\Gamma}$, where $\hat{\Phi}^{-1}$ denotes the quasi-inverse of $\hat{\Phi}$. By \cite[Lemma 5.10]{tran-boundary-relation}, there exists a geodesic ray $\beta$ in $X$ such that $\beta(\infty)\notin \Lambda(F)$ for any $F\in\calF$, and if $(c,\hat{c})$ is an arbitrary nice pair of $\beta$ then, $Hd_{\hat{\Gamma}}(\hat{\Phi}^{-1}(\hat{\beta}),\hat{c})<\infty$. This implies that $Hd_{\hat{X}}(\hat{\beta},\hat{\Phi}(\hat{c}))<\infty$, and therefore $\psi$ is surjective. 

Let $\alpha$ and $\alpha'$ be two geodesic rays in $X$, and let $(c,\hat{c})$ and $(c',\hat{c}')$ be nice pairs of $\alpha$ and $\alpha'$, respectively, such that $[\hat{\Phi}(\hat{c})]=[\hat{\Phi}(\hat{c}')]$. This implies that $[\hat{c}]=[\hat{c}']$ in $\hat{\Gamma}$. Then, by \cite[Lemma 5.11]{tran-boundary-relation}, $[\alpha]=[\alpha'].$ Thus, $\psi$ is bijective.

\noindent{\em Continuity of $\psi$.} 
{\bf Case 1.} We first prove the continuity of $\psi$ on the points which do not lie on the limit sets of any of the flats: let $\alpha$ be a geodesic ray in $X$ such that $[\alpha]\notin \Lambda(F)$ for any $F\in \calF$ and let $(c,\hat{c})$ be a nice pair for $\alpha$. Let $\{\xi_n\}$ be a sequence in $\partial X$ such that $\xi_n\to [\alpha]$ as $n\to\infty$. Let $\alpha_n$ be the geodesic ray representing $\xi_n$. There are three subcases to be considered.

{\bf Subcase 1.} Suppose, except for finitely many $n$, $[\alpha_n]\notin \Lambda(F)$ for any $F\in\calF$. By Lemma \ref{lemma:existence-of-nice-pairs}, there exists nice pairs $(c_n,\hat{c}_n)$ for each $n$. Since, by Theorem \ref{theorem:relation-bowditch-cat(0)-boundary}, $\partial X$ is homeomorphic to $\Delta_{\infty}(\hat{\Gamma})$, $f(\xi_n)\to f([\alpha])$ as $n\to\infty$ where $f$ is the homeomorphism given by this theorem. By definition of $f$, this implies that $[\hat{c}_n]\to [\hat{c}]$ as $n\to\infty$. Since $\hat{\Phi}$ is a quasiisometry, $\hat{\Phi}([\hat{c_n}])\to \hat{\Phi}([\hat{c}])$ as $n\to\infty$. Finally, by definition of $\psi$, it follows that $\psi(\xi_n)\to \psi([\alpha])$ as $n\to \infty$.

{\bf Subcase 2.} Suppose, except for finitely many $n$, there exists $F_n\in \calF$ such that $\xi_n\in \Lambda(F_n)$. Observe that, as $[\alpha]\notin \Lambda(F)$ for any $F\in\calF$, there does not exist $F\in\calF$ such that $\xi_n\in \Lambda(F)$ for infinitely many $n$. Note that the points of each $\partial F$ are remote points to $[\hat{\Phi}(\hat{c})]$ in $\partial_{RHH}X$. It follows from the definition of topology on $\partial_{RHH}X$ that $\psi(\xi_n)\to\psi([\alpha])$ if and only if $\{v_n\}\to\psi([\alpha])$ as $n\to \infty$, where $v_n$ is the cone point corresponding to $F_n\in\calF$. As discussed above, for each $n$, there exists unique $g_nH_i$, for some $g_n\in G$ and $H_i\in\calH$ such that $\alpha_n\subset N_R(Y_{g_nH_i})$ for some $R\geq 0$. Let $w_n$ be the cone point corresponding to $g_nH_i$ in $\hat{\Gamma}$. Since the map $f$ is continuous, $f([\alpha_n])=w_n\to [\hat{c}]$ as $n\to\infty$ in $\Delta_{\infty}(\hat{\Gamma})$. By \cite[Proposition 8.5]{Bow-97}, the subspace topology on $\partial\hat{\Gamma}$ from $\Delta_{\infty}(\hat{\Gamma})$ coincides with the usual topology on $\partial\hat{\Gamma}$. Thus, $w_n\to[\hat{c}]$ as $n\to\infty$ in the usual topology on $\partial\hat{\Gamma}$. Hence, $v_n=\hat{\Phi}(w_n)\to \psi([\alpha])$ as $n\to\infty$.

{\bf Subcase 3.} Suppose that neither Subcase 1 nor Subcase 2 holds. Then, given a subsequence $\{[\alpha_{n_k}]\}$ of $\{[\alpha_n]\}$, there exists a subsequence $\{[\alpha_{n_{k_{l}}}]\}$ of $\{[\alpha_{n_k}]\}$ such that for $\{[\alpha_{n_{k_{l}}}]\}$ either Subcase 1 or Subcase 2 holds. Hence, $\psi(\xi_n)\to\psi([\alpha])$ by Lemma \ref{lemma-subseq-convergence}.

{\bf Case 2.} Let $\alpha$ be the geodesic ray that represents a point $\xi\in\Lambda(F)$ for some $F\in\calF$. Let $\{\xi_n\}$ be a sequence in $\partial X$ such that $\xi_n\to[\alpha]$ as $n\to\infty$. Let $\alpha_n$ be the geodesic ray that starts from $x_0$ and represents $\xi_n$. The following subcases are to be considered.

{\bf Subcase 1.} Suppose, except for finitely many $n$, $\xi_n\in\Lambda(F)$. Since changing the base point is a homeomorphism of $\partial X$ and $F$ is convex, $\psi(\xi_n)\to [\alpha]$ as $n\to\infty$.

{\bf Subcase 2.} Suppose, except for finitely many $n$, $\xi_n\notin\Lambda(F)$. By Lemma \ref{lemma-main-technical-lemma}, there exists a subsequence $\{\alpha_{n_k}\}$ of $\{\alpha_n\}$ such that $\pi_F(\alpha_{n_k}(t_{n_k}))\to\xi:=[\alpha]$ as $k\to\infty$. By abuse of notation, we still denote the subsequence $\{\alpha_{n_k}\}$ by $\{\alpha_n\}$. Note that, for all $n$, $\psi([\alpha_n])$ are remote points to $\xi$ in $\partial_{RHH}X$. To show that $\psi(\xi_n)\to\psi(\xi)$, we show that the boundary projection of $\psi(\xi_n)$ converges to $\psi(\xi)$. For each $n$, there are two possibilities. Suppose $\xi_n\in\Lambda(F')$ for some $F\in\calF$. Then, $\alpha_n\subset N_K(F')$ for some $K\geq 0$. Choose $t\geq t_n$ such that $\alpha_n(t)\notin N_E(\pi_{F'}(F))$ and $f'\in F'$ such that $f'\notin N_E(\pi_{F'}(F))$, where $E$ is the hierarchical constant. Then, by the consistency axiom, we see that $d(\pi_F(f'),\pi_F(F'))\leq E$ and $d(\pi_F(\alpha_n(t),\pi_F(F'))\leq E$. Using the triangle inequality, we have that $d(\pi_F(f'),\pi_F(\alpha_n(t)))\leq 2E$. On the other hand, by Lemma \ref{lemma-main-technical-lemma}, we have that $d(\pi_F(\alpha_n(t),\pi_F(\alpha_n(t_n)))\leq L$. This implies that $d(\pi_F(f'),\pi_F(\alpha_n(t_n))\leq 2E+L$ and thus $d(\pi_F(F'),\pi_F(\alpha_n(t_n)))\leq 3E+L.$ Suppose $\xi_n\notin \Lambda(F)$ for any $F\in\calF$. Let $(c_n,\hat{c}_n)$ be a nice pair corresponding to $\alpha_n$. Since $Hd_{\hat{X}}(\alpha_n,\hat{\Phi}(\hat{c}_n))<\infty$, choose a point $y_n\in\hat{\Phi}(\hat{c}_n)$ and $z_n\in\alpha_n$ such that geodesic joining $y_n$ and $z_n$ in $\hat{X}$ does not intersect $E$-neighborhood of the cone-point corresponding to $F$ and $z_n$ is also far away from $\alpha_n(t_n)$. Then, by the bounded geodesic image axiom, $d(\pi_F(y_n),\pi_F(z_n))\leq E$. Since $d(\pi_F(z_n),\pi_F(\alpha_n(t_n)))\leq L$, $d(\pi_F(y_n),\pi_F(\alpha_n(t_n)))\leq L+E$. Thus, the boundary projection of $\psi([\alpha_n])$ is uniformly close to $\pi_F(\alpha_n(t_n))$. As $\{\pi_F(\alpha_n(t_n))\}$ converges to $\xi$, using Lemma \ref{lemma-subseq-convergence}, $\psi(\xi_n)\to\psi(\xi)$.

{\bf Subcase 3.} Suppose that neither Subcase 1 nor Subcase 2 holds. Then, given a subsequence $\{[\alpha_{n_k}]\}$ of $\{[\alpha_n]\}$, there exists a subsequence $\{[\alpha_{n_{k_{l}}}]\}$ of $\{[\alpha_{n_k}]\}$ such that for $\{[\alpha_{n_{k_{l}}}]\}$ either Subcase 1 or Subcase 2 holds. Hence, $\psi(\xi_n)\to\psi([\alpha])$ by Lemma \ref{lemma-subseq-convergence}.

{\bf Case 3.} Suppose $\{x_n\}$ is a sequence in $X$ such that $x_n\to\xi$ for some $\xi\in\partial X$. Suppose first that $\xi\in \Lambda(F)$ for some $F\in\calF$. If except for finitely many $n$, $x_n\in N_K(F)$ for some $K\geq 0$. Then, it is clear that $\pi_F(x_n)\to\xi$ as $n\to\infty$, and therefore $x_n\to\psi(\xi)$ as $n\to\infty$ in $X\cup\partial_{RHH}X$. If, except for finitely many $n$, $x_n$ does not belong to any neighborhood of $F$, then, from the proof of Lemma \ref{lemma-main-technical-lemma}, it follows that $\pi_F(x_n)\to \xi$ as $n\to \infty$ and thus we are done. Note that given a subsequence of $\{x_n\}$, there exists a subsequence $\{x_{n_k}\}$ of $\{x_n\}$ such that either of the previous two situations holds. Hence, $x_{n_k}\to\psi(\xi)$ as $k\to\infty$. Now, by invoking Lemma \ref{lemma-subseq-convergence}, we are done. Suppose $x_n\to\xi$ and $\xi\notin \Lambda(F)$ for any $F\in\calF$. Let $[x_0,x_n]_X$ and $[x_0,x_n]_{\hat{X}}$ be geodesic joining $x_0$ and $x_n$ in $X$ and $\hat{X}$, respectively. Since $\hat{\Gamma}$ and $\hat{X}$ are quasiisometric, using \cite[Theorem 1.12(3)]{drutu-sapir-tree-graded}, the Hausdorff distance between $[x_0,x_n]_{X}$ and $[x_0,x_n]_{\hat{X}}$ is uniformly bounded. As $x_n\to\xi$, the geodesics $[x_0,x_n]_X$ fellow travel $\alpha$ for longer and longer time, where $\alpha$ is the geodesic ray that represent $\xi$. This implies that geodesics $[x_0,x_n]_{\hat{X}}$ fellow travel $\hat{\Phi}(\hat{c})$ for longer and longer time, where $(c,\hat{c})$ is a nice pair for $\alpha$. Thus, $x_n\to\psi(\xi)$ in $X\cup\partial_{RHH}X$.
\end{proof}


\begin{corollary}\label{cor-density-of-top-level-graph}
    Any $G$-orbit of an element of $\partial_{RHH}X$ is dense in $\partial_{RHH}X$. In particular, the Gromov boundary $\partial \hat{X}$ of $\hat{X}$ is dense in $\partial_{RHH}X$.
\end{corollary}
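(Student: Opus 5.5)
The plan is to transfer the statement to the CAT(0) side using the $G$-equivariant homeomorphism $\psi\colon \partial X\to\partial_{RHH}X$ from Theorem \ref{main-thm-2} (Lemma \ref{theorem-RHHG-boundary-sameas-CAT(0)-bdry}). Since $\psi$ is equivariant and a homeomorphism, a $G$-orbit $G\cdot p$ is dense in $\partial_{RHH}X$ if and only if $G\cdot\psi^{-1}(p)$ is dense in $\partial X$. So it suffices to show that every $G$-orbit in the visual boundary $\partial X$ is dense. To do this, I would pass further to the Bowditch boundary via Theorem \ref{theorem:relation-bowditch-cat(0)-boundary}, with quotient map $q\colon\partial X\to\Delta_\infty(\hat\Gamma)$ collapsing each $\partial F$ to a cone point.

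The key input is minimality of the $G$-action on the Bowditch boundary. Because $G$ is non-elementary relatively hyperbolic (here $X$ is not quasiisometric to $\EE^n$, and we assume $G$ is not virtually cyclic), $G$ acts on $\Delta_\infty(\hat\Gamma)$ as a geometrically finite convergence group. Its limit set is the whole space, so every orbit is dense. Now take $\eta\in\partial X$ and an open set $U\subset\partial X$. I must find $g$ with $g\eta\in U$, and I split into two cases.

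If $U$ contains a point $\zeta$ that is not a peripheral limit point, then $q$ is injective near $\zeta$, in the sense that saturated neighborhoods of $q(\zeta)$ pull back into $U$. Concretely, I would use that $\zeta$ has a neighborhood basis of $q$-saturated sets. This holds because only finitely many flats have visual boundary of diameter greater than a given $\epsilon$ (isolated flats plus properness), so small neighborhoods of $\zeta$ can be shrunk to avoid, or fully contain, each sphere $\partial F$. Density of $G\cdot q(\eta)$ then gives $g\eta\in U$.

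If $U$ meets only peripheral points, it suffices to show that non-peripheral points are dense in $\partial X$, which makes this case impossible. For density, take $\xi\in\partial F$. Choosing a loxodromic (Morse) element $h$ of $G$ not in the stabilizer of $F$, the points $g h^{+}$ for suitable $g\in\mathrm{Stab}(F)$ together with $h$-dynamics accumulate on $\xi$. Alternatively, one can use Lemma \ref{lemma-main-technical-lemma} in reverse, building rays that track $F$ toward $\xi$ for a long time and then leave along a non-peripheral direction. Either way $\partial\hat X\cong\psi(\text{non-peripheral points})$ is dense. This density also gives the ``in particular'' clause directly, since $\partial\hat X=\psi(\partial X\setminus\bigcup_F\partial F)$.

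The main obstacle is the saturation argument: the collapse map $q$ is only a quotient map, and orbit density downstairs must be lifted to orbit density upstairs. A single peripheral sphere $\partial F$ may be hit only at one point by the orbit of $\eta$. I would handle this by treating $\eta\in\partial F_0$ separately. In that case I would use that $\mathrm{Stab}(F_0)$ is virtually $\ZZ^n$ acting cocompactly on $F_0$, so its orbit of $\eta$ in the sphere $\partial F_0$ is finite. Density near other spheres would instead come from conjugating by elements $g$ sending $F_0$ to flats $gF_0$ whose boundary spheres are small and close to the target point. Matching the exact point within a sphere is unnecessary, because small spheres lie entirely inside $U$.
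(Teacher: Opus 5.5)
Your route is valid but differs from the paper's. The paper never passes to the Bowditch boundary. It notes that $G$ contains a rank-one isometry of $X$ and quotes Hamenst\"adt's result that every $G$-orbit in the visual boundary $\partial X$ is then dense. It then transports this to $\partial_{RHH}X$ by the equivariant homeomorphism $\psi$; the ``in particular'' is automatic since $\partial\hat X$ is $G$-invariant.

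You instead start from minimality of the convergence action on $\Delta_\infty(\hat\Gamma)$ and lift it through the collapse map $q$ of Theorem \ref{theorem:relation-bowditch-cat(0)-boundary}. This requires two extra facts.

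\emph{Saturated neighbourhoods.} This needs no null-family argument. The map $q$ is a continuous surjection from a compact space onto a Hausdorff space, hence closed. So if $q^{-1}q(\zeta)=\{\zeta\}$ and $U\ni\zeta$ is open, then $q^{-1}\bigl(\Delta_\infty(\hat\Gamma)\setminus q(\partial X\setminus U)\bigr)$ is a saturated open neighbourhood of $\zeta$ inside $U$.

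\emph{Density of non-peripheral points.} This is where the isolated-flats geometry really enters, and your sketch should be pinned down. By Lemma \ref{lemma-BGI} and convexity, a ray from $x_0$ to a non-peripheral $\zeta$ has bounded projection to $F$. Hence for $g_k\in\mathrm{Stab}(F)$ with $g_kx_0\to\xi\in\partial F$, the rays from $x_0$ to $g_k\zeta$ pass within a uniform distance of $g_k\pi_F(x_0)$. This forces $g_k\zeta\to\xi$.

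With these two facts, your first case already covers every $\eta$, peripheral or not, because the whole sphere $g\,\partial F_0$ lands in the saturated set. So the closing discussion of finite $\mathrm{Stab}(F_0)$-orbits is superfluous.

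In short, the paper's proof is a two-line consequence of CAT(0) rank-one dynamics and never has to deal with the peripheral structure. Yours uses only convergence-group facts and Tran's theorem, already present in this section, at the cost of the lifting argument. It also yields density of $\partial\hat X$ directly as a by-product.
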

\begin{proof}
     Since $G$ contains a rank-one isometry of $X$ (\cite[Lemma 2.30]{BGGGS}), by \cite[Lemma 5.2]{ursula-rank-one-isom}, any $G$-orbit of an element of $\partial X$ is dense in $\partial X$. By Theorem \ref{main-thm-2}, we know that $\partial X$ is $G$-equivariantly homeomorphic to $\partial_{RHH}X$. Hence, the lemma.
\end{proof}
An immediate consequence of Corollary \ref{cor-density-of-top-level-graph} is the following:
\begin{corollary}
    For any infinite normal subgroup of $G$, $\Lambda(N)=\partial_{RHH}X$.
\end{corollary}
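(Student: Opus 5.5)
The plan is to combine Corollary \ref{cor-density-of-top-level-graph} with the fact that a normal subgroup has a limit set invariant under the whole group. Here $N$ is an infinite normal subgroup of $G$, and $\Lambda(N)$ is its limit set in $\partial_{RHH}X$, taken with respect to the compactification $\overline{X}_{RHH}=X\cup\partial_{RHH}X$, by orbits $Nx_0$.

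\textbf{Step 1: $\Lambda(N)$ is nonempty.} By Theorem \ref{main-thm-2}, $\overline{X}_{RHH}$ is homeomorphic to $X\cup\partial X$. The latter is compact and metrizable, being the visual compactification of a proper CAT(0) space. Since $N$ is infinite and $G$ acts properly on $X$, the orbit $Nx_0$ contains a sequence $h_nx_0$ leaving every compact set. By sequential compactness some subsequence converges, and its limit must lie in $\partial_{RHH}X$, because $X$ is open in $\overline{X}_{RHH}$ and the sequence escapes compact sets of $X$.

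\textbf{Step 2: $\Lambda(N)$ is $G$-invariant.} Let $p=\lim h_nx_0$ with $h_n\in N$, and let $g\in G$. Each $g$ acts as a homeomorphism of $\overline{X}_{RHH}$, because $\psi$ is $G$-equivariant. Hence $gp=\lim gh_nx_0=\lim (gh_ng^{-1})(gx_0)$. The elements $gh_ng^{-1}$ lie in $N$ by normality. The sequence $gh_ng^{-1}x_0$ stays at distance $d(gx_0,x_0)$ from $gh_ng^{-1}(gx_0)$. So it suffices to know that two sequences in $X$ at uniformly bounded distance converge to the same boundary point. Transporting through $\psi$, this is Lemma \ref{lemma:weakly-visible} in the CAT(0) compactification. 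Thus $gp\in\Lambda(N)$.

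\textbf{Step 3: conclude.} The limit set $\Lambda(N)$ is closed: a diagonal argument in the metrizable space $\overline{X}_{RHH}$ shows that limits of limit points of $Nx_0$ are limit points of $Nx_0$. By Step 2 it is also $G$-invariant, so it contains the closure of the $G$-orbit of any of its points. By Corollary \ref{cor-density-of-top-level-graph} that orbit is dense, so $\Lambda(N)=\partial_{RHH}X$.

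The only step needing care is Step 2. There one must make sure that the $G$-action extends continuously to $\overline{X}_{RHH}$ and that bounded perturbations do not change limits. Transporting both facts through the homeomorphism $\psi$ of Theorem \ref{main-thm-2} to the CAT(0) side handles them cleanly.
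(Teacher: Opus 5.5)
Your proof is correct, but it takes a different route from the paper. The paper's proof reduces to the proof of Lemma \ref{lemma:limit-set-of-normal-subgroup}(2), which works through the top-level hyperbolic space $\hat{X}$:
\begin{enumerate}
\item $G$ acts coboundedly on $\hat{X}$.
\item By normality, $\Lambda_{\hat{X}}(N)$ is a nonempty $G$-invariant subset of $\partial\hat{X}$, hence equal to $\partial\hat{X}$. This implicitly uses minimality of the $G$-action on $\partial\hat{X}$ and that $N$ is unbounded in $\hat{X}$.
\item $\Lambda_{\hat{X}}(N)\subseteq\Lambda(N)$, since convergence to a point of $\partial\hat{X}$ in $\hat{X}\cup\partial\hat{X}$ gives convergence in $\overline{X}_{RHH}$ through the interior part of the basic neighborhoods.
\item Finally it invokes only the ``in particular'' clause of Corollary \ref{cor-density-of-top-level-graph}, that $\partial\hat{X}$ is dense in $\partial_{RHH}X$.
\end{enumerate}

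You bypass $\hat{X}$ and use the first clause of that corollary directly: every $G$-orbit in $\partial_{RHH}X$ is dense. With this, all you need is that $\Lambda(N)$ is nonempty, closed and $G$-invariant. You check these by transporting compactness, metrizability, the extension of the $G$-action and Lemma \ref{lemma:weakly-visible} through the equivariant homeomorphism $\psi$.

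What your route buys is that you need no information about how $N$ acts on $\hat{X}$. You also make explicit two points the paper leaves implicit: that $\Lambda(N)$ is closed, and that $G$ acts by homeomorphisms on $\overline{X}_{RHH}$. What the paper's route buys is that it deliberately mirrors the HHG argument of Lemma \ref{lemma:limit-set-of-normal-subgroup}. In that general setting, only density of $\partial\calC S$ in $\partial_{HH}G$ is available, not minimality of the $G$-action on the whole hierarchical boundary, so the paper's version is the one that transfers beyond the CAT(0) isolated-flats case.
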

\begin{proof}
    Since, by Corollary \ref{cor-density-of-top-level-graph}, $\partial\hat{X}$ is dense in $\partial_{RHH}X$, the proof follows from the proof of Lemma \ref{lemma:limit-set-of-normal-subgroup}(2).
\end{proof}
Next, we discuss a few applications of Theorem \ref{theorem-RHHG-boundary-sameas-CAT(0)-bdry}. Before that, we define the CT map for relative HHG boundaries of CAT(0) groups with isolated flats.

\begin{definition}\label{defn:CT-maps-isolated-flats}
    Let $H$ be a subgroup of a group $G$. Suppose $H$ and $G$ act geometrically on $Y$ and $X$, respectively, where $X$ and $Y$ are either hyperbolic spaces or CAT(0) spaces with isolated flats. 
    Let $y\in Y$ and $x\in X$. 
    A {\em Cannon--Thurston (CT)} map for relative HHG boundaries for the pair $(H,G)$ is the map $ f:\partial_{RHH} Y\to \partial_{RHH} X$ defined by the following:
    $$f(\lim_{i\to\infty}h_iy)=\lim_{i\to\infty}h_ix$$ provided $f$ is continuous and $f(\xi)$ does not depend on the sequence $\{h_i\}$ such that $h_iy\to \xi$ as $i\to\infty.$
\end{definition}
\begin{remark}
    A CT map for CAT(0) boundaries of $H$ and $G$ is defined similarly by replacing relative HHG boundaries of $H$ and $G$ by CAT(0) boundaries of $H$ and $G$, respectively, in Definition \ref{defn:CT-maps-isolated-flats}.
\end{remark}
The following two propositions recover some of the results of \cite{BGGGS}.
\begin{proposition}
     Let $G$ be a CAT(0) group with isolated flats. 
    For an infinite hyperbolic normal subgroup $N$ of $G$, the pair $(N,G)$ does not admit a Cannon--Thurston map for relative HHG boundaries. Moreover, there does not exist a Cannon--Thurston map from the Gromov boundary of $N$ to the CAT(0) boundary of $G$.
\end{proposition}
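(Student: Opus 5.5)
We apply the non-existence criterion Lemma \ref{lemma-nonexistence-ct} in the same way as in the proof of Theorem \ref{theorem-main}, but with the relative HHG boundary $\partial_{RHH}X$ in place of $\partial_{HH}G$. The second statement then follows at once from Theorem \ref{main-thm-2}: the $G$-equivariant homeomorphism $\psi$ is the identity on $X$, so a Cannon--Thurston map into the CAT(0) boundary yields one into $\partial_{RHH}X$ by composing with $\psi$, and conversely. The limit sets correspond as well, since $\psi$ respects limits of orbit sequences. So it suffices to treat the relative HHG boundary.

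First, $N$ is a hyperbolic group acting geometrically on a hyperbolic space, so its relative HHG boundary is its Gromov boundary. Every infinite-order $h\in N$ acts on $\partial N$ with North--South dynamics, which gives condition (1) of the criterion.

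Next, we need an infinite-order $h\in N$ that fixes at least three points of $\Lambda(N)$. By the corollary just proved, $\Lambda(N)=\partial_{RHH}X$ for every infinite normal subgroup, so it is enough for $h$ to fix three points of $\partial_{RHH}X$. If $X$ is hyperbolic, i.e.\ $\calF=\emptyset$, then $G$ is hyperbolic. The statement presumably intends the case $\calF\neq\emptyset$, since a hyperbolic group has a CT map to itself. So assume $\calF\neq\emptyset$. Then $G$ contains a virtually $\ZZ^2$ peripheral subgroup stabilizing a flat $F$, hence a subgroup isomorphic to $\ZZ^2$. By Lemma \ref{lemma-normal-intersect-freeabelian} there is $K\cong\ZZ^2$ with $1\neq h\in K\cap N$. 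The subgroup $K$ is virtually contained in a peripheral subgroup, because abelian subgroups of rank $2$ in a relatively hyperbolic group are parabolic. Hence $K$ stabilizes some $F'\in\calF$ and acts on it cocompactly by translations, after passing to a finite-index subgroup and a power of $h$. A translation of the flat $F'$ fixes its visual boundary $\partial F'\cong S^{k-1}$, $k\geq 2$, pointwise. Under $\psi$ this becomes the pointwise-fixed component $\partial F'\subset\partial_{RHH}X$, which has infinitely many points. So $L=\partial F'$ satisfies condition (2), and Lemma \ref{lemma-nonexistence-ct} gives a contradiction with the existence of a CT map.

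The main obstacle is justifying that the chosen $h$ acts on $F'$ as a translation and fixes $\partial F'$ pointwise. I would first try to use that $\mathrm{Stab}(F')$ is virtually $\ZZ^k$ acting cocompactly on $F'\cong\EE^k$: a finite-index subgroup acts by translations (Bieberbach), and we replace $h$ by a power lying in it. A second point needing care is that $h$ does have infinite order in $N$. This holds because $N\cap K$ is a nontrivial subgroup of a torsion-free group, so $h$ lies in an infinite subgroup of $N$.
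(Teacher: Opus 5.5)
Your proposal is correct and follows the paper's proof. Lemma \ref{lemma-normal-intersect-freeabelian} gives $1\neq h\in N\cap K$ with $K\cong\ZZ^2$ parabolic, $h$ fixes the boundary of the corresponding flat pointwise, $\Lambda(N)=\partial_{RHH}X$, Lemma \ref{lemma-nonexistence-ct} then applies, and the ``moreover'' part transfers via the homeomorphism $\psi$ of Theorem \ref{main-thm-2}. Your extra care fills in points the paper leaves implicit: you note that $\calF\neq\emptyset$ is needed, and you pass to a power of $h$ acting on $F'$ by translation. You should drop the claim that $K$ acts cocompactly on $F'$, since it can fail when $\dim F'>2$ and the argument does not need it.
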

\begin{proof}
    The ``moreover'' statement, which is given by \cite[Theorem 5.3]{BGGGS}, is clear from Theorem \ref{main-thm-2}. To prove the first statement, we verify the hypotheses of Lemma \ref{lemma-nonexistence-ct}. Since $G$ contains $\mathbb Z^2$, there exists a subgroup $K$ of $G$ isomorphic to $\mathbb Z^2$ such that $N\cap K\neq\phi$ by Lemma \ref{lemma-normal-intersect-freeabelian}. Note that $K\subset N_r(gH)$ for some $g\in G$ and $H\in\mathcal H$, and thus the elements of $K$ fix pointwise the limit set of the corresponding flat in $X$. Since $N$ is normal, $\Lambda (N)=\partial_{RHH}X$. Now, the element $h\in N\cap K$ satisfies both the hypotheses of Lemma \ref{lemma-nonexistence-ct}. Hence, we are done.
\end{proof}
\begin{proposition}
    Let $G$ be a CAT(0) group with isolated flats and let $H$ be a normal subgroup of $G$ which is also CAT(0) with isolated flats. Then, the pair $(H,G)$ does not admit a Cannon--Thurston map for relative HHG boundaries.
\end{proposition}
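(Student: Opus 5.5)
The plan is to verify the hypotheses of Lemma \ref{lemma-nonexistence-ct} for the pair $(H,G)$, with $\partial_{RHH}$ boundaries on both sides, and then finish as in the previous proposition. The point is that $H$ is now CAT(0) with isolated flats instead of hyperbolic. So we need an element $h\in H$ that acts with North--South dynamics on $\partial_{RHH}Y$ (with $Y$ the CAT(0) space on which $H$ acts geometrically) and that fixes pointwise at least three points of $\Lambda_G(H)$ in $\partial_{RHH}X$.

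For the first condition, use Theorem \ref{main-thm-2} to replace $\partial_{RHH}Y$ by the visual boundary $\partial Y$ via an $H$-equivariant homeomorphism. Since North--South dynamics is a topological property preserved under equivariant homeomorphisms, it suffices to find a rank-one element $h\in H$ acting on $Y$. Rank-one isometries of proper CAT(0) spaces act with North--South dynamics on the visual boundary (Hamenst\"adt; cf.\ \cite{ursula-rank-one-isom}). $H$ contains rank-one elements by \cite[Lemma 2.30]{BGGGS}, as used in Corollary \ref{cor-density-of-top-level-graph}. If $Y$ were quasi-isometric to $\EE^n$, it would be excluded by the definition of isolated flats, so this case does not arise. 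In fact we want more: we want $h$ to fix $\geq 3$ points of $\partial_{RHH}X$. The natural candidate would be $h\in H\cap K$ for a flat-stabilizer $K\cong\ZZ^2$ of $G$, but that $h$ need not be rank one in $H$. So the plan needs adjusting.

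Adjusted plan: since $H$ is normal and infinite, the last corollary gives $\Lambda_G(H)=\partial_{RHH}X$. Take $K\cong\ZZ^2$ in $G$ stabilizing a flat $F$. By the argument of Lemma \ref{lemma-normal-intersect-freeabelian}, conjugating $h$ by elements of $K$ and using normality produces elements of $H\cap K'$ for some $K'\cong\ZZ^2$. Concretely, for $h\in H$ rank-one in $Y$ and $k\in K$, the commutator $[h,k]=h(kh^{-1}k^{-1})\in H$. I would instead use the contrapositive form of Lemma \ref{lemma-nonexistence-ct}'s proof directly. Suppose a CT map $\partial i:\partial_{RHH}Y\to\partial_{RHH}X$ exists. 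It is surjective by Lemma \ref{lemma:image of CT is the limit set}. Choose $h\in H\cap K$, which is nontrivial by Lemma \ref{lemma-normal-intersect-freeabelian}, whose proof uses only that $G$ contains $\ZZ^2$ and that $N$ is normal and not virtually containing $\ZZ^2$ in the relevant way. If that lemma fails because $H$ itself contains $\ZZ^2$, then $H\cap K$ may be rank $2$, and that is fine. Then $h$ is infinite order in $H$. If $h$ is rank one in $Y$, North--South dynamics on $\partial Y\cong\partial_{RHH}Y$ yields the contradiction exactly as in Lemma \ref{lemma-nonexistence-ct}, since $h$ fixes the sphere $\partial F$ pointwise.

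The remaining case, which is the main obstacle, is when $h$ lies in a flat stabilizer of $H$. Then $h$ fixes a sphere $\partial F_Y$ pointwise and acts on the rest of $\partial Y$ with dynamics converging to $\partial F_Y$ (attracting to the parabolic point in the Bowditch quotient). Here I would use the following argument. By $H$-equivariance and continuity, $\partial i(\partial F_Y)$ is a connected set, and for any $y\notin\partial F_Y$, the limit $h^n y$ tends to a point of $\partial F_Y$ (the endpoint direction of $h$). Hence $h^n\partial i(y)$ accumulates only on $\partial i(\partial F_Y)$. But $h$ fixes $\partial F$ pointwise in $\partial_{RHH}X$, and $\partial F\subset\partial i(\partial Y)$, so $\partial F\subseteq\partial i(\partial F_Y)$. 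Density of the $G$-orbit of $\partial F$ (Corollary \ref{cor-density-of-top-level-graph}) combined with normality should then force $\partial i(\partial F_Y)$ to be too large, namely one of the finitely many $H$-orbit types of flat boundaries. I would have to check this carefully, so I expect this to be the hardest step. The cleaner route I would try first is to pick $h$ as the product of a rank-one element of $H$ with a high power of an element of $H\cap K$, and to show that this product is rank one in $Y$ while still having a power fixing three points of $\partial_{RHH}X$. This reduces everything to Lemma \ref{lemma-nonexistence-ct}.
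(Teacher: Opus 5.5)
There is a genuine gap. Your argument only closes when the element $h\in H\cap K$ happens to be rank one in $Y$. The case you yourself flag as the main obstacle, where $h$ lies in a flat stabilizer $Q$ of $H$, is left open, and neither proposed repair works.

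\emph{The ``cleaner route''.} An infinite-order element of $G$ that fixes three points of $\partial X\cong\partial_{RHH}X$ cannot be rank one in $X$, so it lies in the stabilizer $P$ of a flat of $X$. To apply Lemma \ref{lemma-nonexistence-ct} at all, you therefore need an element of some $H\cap gPg^{-1}$ that is rank one in $Y$. A product of a rank-one element with a high power of $h$ will generally be rank one in $X$ as well, and so fixes only two points. Such elements need not exist. Take $H=H_0*\ZZ^2$, normal in $G=H\rtimes_\phi\ZZ$ as in Section 3, with say $H_0$ a closed surface group, $\phi|_{H_0}$ pseudo-Anosov and $\phi|_{\ZZ^2}=\mathrm{id}$; both groups are CAT(0) with isolated flats. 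There every $H\cap gPg^{-1}$ is an $H$-conjugate of the $\ZZ^2$ factor, i.e.\ a flat stabilizer of $H$. So Lemma \ref{lemma-nonexistence-ct} cannot be applied with any choice of $h$.

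\emph{The density repair.} Your other repair (density plus normality forcing $\partial i(\partial F_Y)$ to be ``too large'') is not an argument, and it cannot become one as stated. Every ingredient it uses also holds for $H=G$, or for $H$ of finite index, where a CT map does exist. This also shows the statement implicitly needs a hypothesis such as infinite index. What the dynamics of two independent elements of $Q$ actually give is:
\begin{itemize}
\item $\partial i^{-1}(\partial F)\subseteq\partial F_Y$;
\item $\partial i(\partial F_Y)$ is the boundary sphere of the subflat of $F$ spanned by $Q$.
\end{itemize}
This contradicts surjectivity only when $Q$ has strictly smaller rank than $P$, and you never establish that.

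\emph{The paper's route.} The paper avoids all of this by applying Theorem \ref{main-thm-2} on \emph{both} sides, not just to $Y$. The homeomorphisms $\psi_Y\colon Y\cup\partial Y\to Y\cup\partial_{RHH}Y$ and $\psi_X\colon X\cup\partial X\to X\cup\partial_{RHH}X$ restrict to the identity on $Y$ and $X$. Hence $h_iy\to\xi$ in $Y\cup\partial Y$ if and only if $h_iy\to\psi_Y(\xi)$, and likewise in $X$. So if $f$ were a CT map for relative HHG boundaries, then $\psi_X^{-1}\circ f\circ\psi_Y$ would be a CT map between visual boundaries, contradicting \cite[Theorem 5.6]{BGGGS}. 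The flat-stabilizer case you got stuck on is thus absorbed into the cited visual-boundary theorem, and no new dynamics on $\partial_{RHH}$ is needed.
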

\begin{proof}
    By Theorem \ref{main-thm-2}, the relative HHG boundaries of $H$ and $G$ are homeomorphic to their CAT(0) boundaries. Hence, the result follows from \cite[Theorem 5.6]{BGGGS}.
\end{proof}

\bibliographystyle{alpha}
\def\bibfont{\footnotesize}
\bibliography{main}
\end{document}